\tikzset{
	>=stealth',
	punktchain/.style={
		rectangle,
		rounded corners,
		draw=black, thick,
		minimum height=3em,
		text centered,
		on chain},
	line/.style={draw, thick, <-},
	element/.style={
		tape,
		top color=white,
		bottom color=blue!50!black!60!,
		minimum width=8em,
		draw=blue!40!black!90, very thick,
		text width=10em,
		minimum height=3.5em,
		text centered,
		on chain},
	every join/.style={->, thick,shorten >=1pt},
	decoration={brace},
	tuborg/.style={decorate},
	tubnode/.style={midway, right=2pt},
}
\def\C{\ensuremath{\mathbb{C}}}
\def\R{\ensuremath{\mathbb{R}}}
\def\Z{\ensuremath{\mathbb{Z}}}
\def\alg{\mathrm{alg}}
\def\ch{\mathop{\mathrm{ch}}\nolimits}
\def\Coh{\mathop{\mathrm{Coh}}\nolimits}
\def\deg{\mathop{\mathrm{deg}}}
\def\dim{\mathop{\mathrm{dim}}\nolimits}
\def\ext{\mathop{\mathrm{ext}}\nolimits}
\def\Ext{\mathop{\mathrm{Ext}}\nolimits}
\def\HH{\mathrm{H}}
\def\Hom{\mathop{\mathrm{Hom}}\nolimits}
\def\Ker{\mathop{\mathrm{Ker}}\nolimits}
\def\min{\mathop{\mathrm{min}}\nolimits}
\def\Pic{\mathop{\mathrm{Pic}}}
\def\rk{\mathop{\mathrm{rk}}}
\def\Stab{\mathop{\mathrm{Stab}}\nolimits}
\def\Db{\mathrm{D}^{b}}
\def\star{(*)}
\newcommand\TFILTB[3]{%
\xymatrix@=1pc{
{0 = {#1}_0} \ar[rr]&&
{{#1}_1} \ar[rr]\ar[ld] &&
{{#1}_2} \ar[r]\ar[ld] &
{\cdots} \ar[r] & { {#1}_{#3-1}} \ar[rr] &&
{{#1}_{#3} = {#1}} \ar[ld]
\\
& *{{#2}_1} \ar@{.>}[ul] &&
{{#2}_2} \ar@{.>}[ul] & &&&
{{#2}_{{#3}}} \ar@{.>}[ul]
}}
\def\abs#1{\left\lvert#1\right\rvert}
\newcommand\stv[2]{\left\{#1\,\colon\,#2\right\}}
\newtheorem*{rep@theorem}{\rep@title}
\newcommand{\newreptheorem}[2]{%
\newenvironment{rep#1}[1]{%
 \def\rep@title{#2 \ref{##1}}%
 \begin{rep@theorem}}%
 {\end{rep@theorem}}}
\newtheorem{Thm}{Theorem}[section]
\newtheorem{Prop}[Thm]{Proposition}
\newtheorem{Lem}[Thm]{Lemma}
\newtheorem{Cor}[Thm]{Corollary}
\newtheorem{thm-int}{Theorem}
\theoremstyle{definition}
\newtheorem{Def-s}[Thm]{Definition}
\newtheorem{Def}[Thm]{Definition}
\newtheorem{Rem}[Thm]{Remark}
\def\C{\ensuremath{\mathbb{C}}}
\def\R{\ensuremath{\mathbb{R}}}
\def\Z{\ensuremath{\mathbb{Z}}}
\def\cA{\ensuremath{\mathcal A}}
\def\cF{\ensuremath{\mathcal F}}
\def\cO{\ensuremath{\mathcal O}}
\def\cT{\ensuremath{\mathcal T}}
\def\cW{\ensuremath{\mathcal W}}
\def\vv{\ensuremath{\mathbf v}}
\def\mH{\ensuremath{\mathrm{H}}}
\def\UUU{\mathfrak U}
\def\Halg{H^*_{\alg}(X, \Z)}
\def\HalgR{H^*_{\alg}(X, \R)}
\def\sab{\sigma_{\alpha, \beta}}
\begin{document}

\title[Higher rank Clifford indices of curves on a K3 surface]{Higher rank Clifford indices of curves on a K3 surface}

\author{Soheyla Feyzbakhsh}
\address{S. F.: \\
School of Mathematics and Maxwell Institute,
University of Edinburgh,
James Clerk Maxwell Building,
Peter Guthrie Tait Road, Edinburgh, EH9 3FD,
United Kingdom}
\email{s.feyzbakhsh@ed.ac.uk}
\urladdr{http://www.maths.ed.ac.uk/~s1373772/}

\author{Chunyi Li}
\address{C. L.:\\
Mathematics Institute, University of Warwick,
Coventry, CV4 7AL,
United Kingdom}
\email{C.Li.25@warwick.ac.uk}
\urladdr{https://sites.google.com/site/chunyili0401/}

\keywords{Clifford Index, Bridgeland stability conditions, K3 surface}

\begin{abstract} 
Let $(X,H)$ be a polarized K3 surface with $\mathrm{Pic}(X) = \Z H$, and let $C\in |H|$ be a smooth curve of genus $g$. We give an upper bound on the dimension of global sections of a semistable vector bundle on $C$. This allows us to compute the higher rank Clifford indices of $C$ with high genus. In particular, when $g\geq r^2\geq 4$, the rank $r$ Clifford index of $C$ can be computed by the restriction of Lazarsfeld-Mukai bundles on $X$ corresponding to line bundles on the curve $C$. This is a generalization of the result by Green and Lazarsfeld for curves on K3 surfaces to higher rank vector bundles. 
We also apply the same method to the projective plane and show that the rank $r$ Clifford index of a degree $d(\geq 5)$ smooth plane curve is $d-4$, which is the same as the Clifford index of the curve.

\end{abstract}

\date{\today}

\maketitle

\setcounter{tocdepth}{1}
\tableofcontents
\def\mhk{M^h_k} 
\def\lrd{V^r_d(\abs{H})}
\def\ext{\mathrm{ext}}
\def\Ext{\mathrm{Ext}}
\def\clf{\mathrm{Cliff}}
\def\rg{\frac{r}{g}}
\def\gr{\frac{g}{r}}
\def\fgr{\left\lfloor\frac{g}{r}\right\rfloor}
\def\lf{\left\lfloor}
\def\rf{\right\rfloor}
\def\pp{\mathbf{P}^2}
\def\vvv{\tilde{\mathbf{v}}}

\section{Introduction}

Let $\UUU_C(r,d)$ be the set of semistable rank $r$-vector bundles of degree $d$ on a smooth curve $C$. For $E\in \UUU_C(r,d)$, its \textit{Clifford index} is defined as 
\[\clf(E)=\frac{d}{r}-\frac{2}{r}h^0(C,E)+2.\]
By the higher rank Clifford Theorem (\cite[Theorem 2.1]{Brambila-Paz1995Geography}), when $0\leq d\leq r(g-1)$, the index $\clf(E)$ is non-negative. The \textit{rank $r$ Clifford index} of $C$, in this paper, is defined as:
\[\clf_r(C)\coloneqq \min \{ \clf(E)| E\in \UUU_C(r,d),\, d\leq r(g-1), h^0(C,E)\geq 2r \}.\]
The main result is as follows. 
\begin{Thm}
Let $(X,H)$ be a smooth polarized K3 surface with $\mathrm{Pic}(X) = \Z H$, and let $C$ be a smooth curve of genus $g$ in the linear system $|H|$. Let $E$ be a slope semistable rank $r$-vector bundle of degree $d$ on the curve $C$ such that $d\leq r(g-1)$. Then we have the bound for the dimension of the global sections of $E$:
\begin{align}
h^0(C,E) < r+\frac{g}{4r(g-1)^2} d^2 +\rg.\label{eq:mainthm}
\end{align}
When $r\geq 2$ and $g\geq r^2$, the rank $r$ Clifford index of $C$ 
\[\clf_r(C) = \frac{2}{r}(g-1)-\frac{2}{r}\fgr.\]
\label{main1}
\end{Thm}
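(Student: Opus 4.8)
The plan is to treat the numerical bound \eqref{eq:mainthm} as the engine driving the whole statement. I would first establish it on the surface side via Bridgeland stability, and then obtain the value of $\clf_r(C)$ by pairing an explicit Lazarsfeld--Mukai construction (the upper bound) with a sharp form of the same stability estimate (the lower bound). Two facts I would keep in reserve are that $C$ is Brill--Noether--Petri general (Lazarsfeld, since $\Pic(X)=\Z H$) and that the classical Clifford index of $C$ is the generic value $\lfloor (g-1)/2\rfloor$ (Green--Lazarsfeld); the restriction theorem on $X$ with $\Pic(X)=\Z H$ then lets me pass freely between slope-semistable sheaves on $X$ and their restrictions to $C$.

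For \eqref{eq:mainthm} I would read the sections of $E$ as morphisms on $X$. Writing $\iota\colon C\hookrightarrow X$, one computes $\ch(\iota_*E)=(0,rH,d-r(g-1))$ and $h^0(C,E)=\dim\Hom_X(\cO_X,\iota_*E)$, while slope-semistability of $E$ makes $\iota_*E$ a $\sab$-semistable object throughout a suitable chamber. Evaluating the sections produces the Lazarsfeld--Mukai object $F=\ker(\cO_X^{\oplus h^0(C,E)}\to \iota_*E)$, with $\ch(F)=(h^0(C,E),-rH,r(g-1)-d)$ (after replacing $E$ by the subsheaf generated by its sections, which only lowers $d$). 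The heart of the matter is to choose the stability condition in which the pair $(\cO_X,\iota_*E)$, equivalently $F$, sits on its relevant wall, and to feed $\ch(F)$ into the support property, i.e.\ the sharp Bogomolov--Gieseker inequality available on a K3. Optimising that estimate over the parameter $\beta$ (the position of the wall) is what should manufacture the precise quadratic coefficient $\tfrac{g}{4r(g-1)^2}$. I expect the wall analysis needed to make this sharp to be the principal difficulty, since the classical inequality $\Delta(F)\geq 0$ alone is visibly too weak to produce that coefficient.

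For the upper bound on $\clf_r(C)$ I would exhibit a bundle attaining the stated value. The Brill--Noether number for line bundles of degree $g-1+r-\fgr$ with $r$ sections is $\rho=g-r\fgr\geq 0$, so $C$ carries a line bundle $A$ with $h^0(A)=r$ and $h^1(A)=\fgr$. Its Lazarsfeld--Mukai bundle $E_{C,A}$ on $X$ has rank $r$, determinant $H$, is slope-semistable, and satisfies $h^0(E_{C,A})=h^0(A)+h^1(A)=r+\fgr$. Restricting to $C$ yields a rank $r$ bundle $E_{C,A}|_C$ of degree $H\cdot C=2(g-1)$ with $h^0(E_{C,A}|_C)=r+\fgr$; here the hypothesis $g\geq r^2$ is exactly what forces $r+\fgr\geq 2r$, so the bundle is admissible, it is semistable by the restriction theorem, and a direct computation gives $\clf(E_{C,A}|_C)=\tfrac{2}{r}(g-1)-\tfrac{2}{r}\fgr$.

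It remains to show that no admissible bundle beats this, which is where I expect the real work to lie. Substituting \eqref{eq:mainthm} into $\clf(E)=\tfrac{d}{r}-\tfrac{2}{r}h^0(C,E)+2$ gives $\clf(E)>\tfrac{d}{r}-\tfrac{g}{2r^2(g-1)^2}d^2-\tfrac{2}{g}$, and together with the integrality of $d,h^0$ and the constraint $h^0\geq 2r$ this already forces $\clf(E)\geq\tfrac{2}{r}(g-1-\fgr)$ at and beyond the balanced degree $d=2(g-1)$, where \eqref{eq:mainthm} is sharp and matches the construction above; in particular this disposes of the case $g=r^2$ outright, since there $h^0\geq 2r$ already forces $d\geq 2(g-1)$. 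The obstacle is the window $d<2(g-1)$ when $g>r^2$: there \eqref{eq:mainthm} by itself permits low-degree bundles of spuriously small Clifford index (and the naive improvement $v(F)^2\geq -2$ is unavailable, as it is violated by the extremal bundles themselves), so these must be excluded by a strictly sharper input. The route I would take is to use the Lazarsfeld--Mukai correspondence to transport any putative minimiser of degree $d<2(g-1)$ to one of degree $2(g-1)$ without increasing its Clifford index, where the sharp form of \eqref{eq:mainthm} applies, invoking the Brill--Noether generality of $C$ and the value $\lfloor(g-1)/2\rfloor$ of its classical Clifford index to rule out the remaining boundary configurations. Combining this lower bound with the construction completes the evaluation of $\clf_r(C)$.
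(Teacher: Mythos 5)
The decisive gap is in your lower bound for $\clf_r(C)$. Your claim that \eqref{eq:mainthm} together with integrality of $d$ and $h^0$ forces $\clf(E)\geq \frac{2}{r}(g-1)-\frac{2}{r}\fgr$ ``at and beyond the balanced degree'' is false once $r\nmid g$: it does hold at $d=2(g-1)$ exactly, but fails immediately above. Take $r=3$, $g=11$, $d=21=2(g-1)+1$ (so $d\leq r(g-1)=30$ and $g\geq r^2$). Then \eqref{eq:mainthm} gives $h^0(C,E)<3+\frac{11\cdot 441}{1200}+\frac{3}{11}<7.32$, i.e.\ $h^0\leq 7$; a semistable bundle with these invariants and $h^0=7\geq 2r$ would have $\clf(E)=\frac{21}{3}-\frac{2}{3}\cdot 7+2=\frac{13}{3}$, strictly below the target value $\frac{14}{3}$. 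So the first part of the theorem, even with integrality, cannot exclude such bundles, and this is precisely why the paper's proof contains Step 2 (covering $-2\left(\fgr-r\right)\leq d-2(g-1)\leq 4$): there the polygon argument is rerun with strictly sharper inputs, namely the refined first-wall bounds of Lemma \ref{lem:bound for beta1} ($\beta_1\geq -1+\frac{1}{r}$, and $\beta_2\leq\frac{1}{r}$ when $d\leq 2g-2+r$), a case division on whether $\ch_1(\tilde E_1)=H$, and explicit control of $\ch_2(\tilde E_1)$. Below the balanced degree your proposal is also not an argument: ``transporting'' a putative rank-$r$ minimiser of degree $d<2(g-1)$ to degree $2(g-1)$ through the Lazarsfeld--Mukai correspondence without increasing its Clifford index is not an operation that exists for higher-rank bundles on $C$, and you do not construct it. The paper does something different: Step 3 shows that for $d<2(g-1)-2\left(\fgr-r\right)$ every semistable $E$ has $h^0(C,E)<2r$, so such bundles never enter the definition of $\clf_r(C)$, and the remaining window is absorbed into Step 2.

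Two further points. For \eqref{eq:mainthm} itself your sketch stops where the work begins, as you acknowledge: the coefficient $\frac{g}{4r(g-1)^2}$ is not produced by a support-property/Bogomolov--Gieseker estimate applied to the evaluation kernel $F$ at a single wall. In the paper it comes from combining (i) Lemma \ref{lem:fw}, pinning the first wall of $\iota_*E$ between $\beta_1\geq\frac{d}{rH^2}-1$ and $\beta_2\leq\frac{d}{rH^2}$; (ii) Proposition \ref{prop:bound for global sections} (from \cite{Fe17}), which bounds $h^0$ by the length of the full Harder--Narasimhan polygon of $\iota_*E$ at $\sigma_{0,\alpha}$, $\alpha\to0^+$, in the norm $\lVert x+iy\rVert=\sqrt{x^2+(2H^2+4)y^2}$; and (iii) Lemma \ref{lem:polygon}, which traps that polygon in the triangle $opq$ determined by the wall. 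A single inequality on $\ch(F)$ sees none of the Harder--Narasimhan structure at the limit point, and, as you yourself note, $\Delta(F)\geq 0$ is too weak; nothing in your sketch replaces these steps. Finally, your background inputs are shakier than stated: Lazarsfeld's Brill--Noether generality holds for a generic, not an arbitrary, smooth curve in $|H|$, and the restriction theorem has discriminant hypotheses that must be verified. The paper sidesteps both by producing the extremal bundle directly on $X$ as a stable sheaf with Chern character $\left(r,H,\fgr-r\right)$ via \cite[Theorem 2.15]{BM:walls} and restricting it by \cite[Theorem 1.1]{Fe16}, whose hypotheses are checked in Theorem \ref{thm:constofer}.
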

The upper bound for $h^0(C,E)$ in Theorem \ref{main1} is much stronger than the higher rank Clifford Theorem, which says $h^0(C,E)\leq r +\frac{d}{2}$. The bound is not far from the sharp bound, see Remark \ref{rem:sharp}. For a smooth curve $C$ of genus g, several upper bounds for the dimension of global sections of vector bundles of low slope $\mu= d/r$ have been introduced in \cite{Brambila-Paz1995Geography,Mercat-brill-noether-1,Mercat-stable-bundles-of-slope-2}, which are also included in \cite{Lange2015g4}.  Sharp bounds for the case $g \leq 6$ and $\mu<2$ have been determined in \cite{Brambila-Paz1995Geography,Mercat-brill-noether-1,Mercat-stable-bundles-of-slope-2,Lange2015g4,Lange2015g5,Lange2017g6}. The upper bound \eqref{eq:mainthm} is in general stronger than the bounds in these previous papers unless $g\leq 6$ or $\mu\leq 2$. 

For $r=2$, the second statement of Theorem \ref{main1} gives
\begin{align*}
\clf_2(C)=\clf(C) =\lf\frac{g-1}2 \rf,
\end{align*}
so we re-obtain the result \cite[Theorem 1.3]{Bakker2015Mercat}. Also for $r \geq 3$, we have 
\begin{align*}
\clf_r(C) < \clf(C) =\lf\frac{g-1}2 \rf. 
\end{align*}
This indicates the failure of the Mercat's conjecture in  \cite{Mercat2002CLIFFORD} for $C$ which states the higher ranks Clifford indices of the curve $C$ are equal to $\clf(C)$. 

Let $A$ be a globally generated line bundle on the curve $C\subset X$, the Lazarsfeld-Mukai bundle $E_{C,A}$ on $X$ is defined via the exact sequence
\begin{equation*}
0 \rightarrow E_{C,A}^{\vee} \rightarrow H^0(C,A) \otimes \mathcal{O}_X \xrightarrow{\textbf{ev}} A \rightarrow 0. 
\end{equation*}
In all cases in the second part of Theorem \ref{main1}, there exists a line bundle $A$ on the curve $C$ such that the rank $r$-Clifford index is computed by the restriction of the corresponding Lazarsfeld-Mukai bundle on the K3 surface $X$. We expect this result holds without the assumption on the Picard group of $X$. This can be viewed as a generalization for the result of Green and Lazarsfeld in \cite{Green-Lazarsfeld-clifford} which says that for a curve $C$ on a smooth K3 surface with $\clf(C) < \lf\frac{g-1}2 \rf$, the Clifford index can be computed by the restriction of a line bundle on the K3 surface. \par
Our argument can be generalized to curves on other surfaces, especially when the surface admits a stronger Bogomolov-Gieseker inequality.  Examples of such surfaces include the projective plane, del Pezzo surfaces and quintic surfaces. We explain more details for smooth plane curves in Section \ref{section:p2}. In particular, we show that the first part of the Mercat's conjecture \cite{Mercat2002CLIFFORD} holds for smooth plane curves:
\begin{Thm}[Corollary \ref{cor:mercatforp2}]\label{thmintro:mercatforp2}
Let $C$ be a degree $l(\geq 5)$ smooth irreducible plane curve, then 
\[\clf_r(C)=l-4,\]
for any positive integer $r$.
\end{Thm}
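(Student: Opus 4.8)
The plan is to prove two matching inequalities. The reverse (easy) inequality $\clf_r(C)\le l-4$ is obtained by exhibiting an explicit semistable bundle: take $E=\cO_C(1)^{\oplus r}$, the $r$-fold sum of the hyperplane restriction $\cO_C(1)=\cO_{\pp}(1)|_C$. Since the plane model is the $g^2_l$ we have $h^0(C,\cO_C(1))=3$, so $E$ is polystable, hence semistable, with $d=rl$, $h^0(C,E)=3r\ge 2r$, and $d=rl\le r(g-1)$ for $l\ge 5$ (here $g=\binom{l-1}{2}$, and $rl\le r(g-1)$ reduces to $l^2-5l\ge 0$). Its Clifford index is $\clf(E)=\clf(\cO_C(1))=l-4$, giving $\clf_r(C)\le l-4$.

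The content is the forward inequality $\clf_r(C)\ge l-4$, i.e.\ that no semistable $E$ of rank $r$, degree $d\le r(g-1)$ with $h^0(C,E)\ge 2r$ has $\clf(E)<l-4$. For this I would establish the $\pp$-analogue of the bound \eqref{eq:mainthm}. Writing $\iota\colon C\hookrightarrow \pp$ for the inclusion, $[C]=lH$ with $H^2=1$, a Grothendieck--Riemann--Roch computation gives
\[\ch(\iota_* E)=\left(0,\ rlH,\ d-\tfrac{rl^2}{2}\right),\qquad h^0(C,E)=\hom_{\pp}(\cO_{\pp},\iota_* E).\]
As in the proof of Theorem \ref{main1}, the strategy is to run the Lazarsfeld--Mukai/Bridgeland machinery: choose a tilt (Bridgeland) stability condition $\sigma_{\alpha,\beta}$ on $\pp$ for which $\iota_* E$ is $\sigma_{\alpha,\beta}$-semistable (the slope semistability of $E$ on $C$ is what controls the possible destabilizing subobjects), and then bound the number of global sections $\hom(\cO_{\pp},\iota_* E)$ in terms of the Chern character of $\iota_*E$.

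The decisive step, and the place where $\pp$ improves on the K3 case, is the use of the stronger Bogomolov--Gieseker inequality available on $\pp$ (the Dr\'ezet--Le Potier constraint on the Chern characters of semistable objects). Applying it to the relevant semistable object --- equivalently, to the cone $F$ of the evaluation $\cO_{\pp}^{\oplus h^0}\to \iota_* E$, which has $\ch(F)=(h^0,-rlH,\tfrac{rl^2}{2}-d)$ --- pins down a sharp upper bound for $h^0(C,E)$ as a function of $(r,d,l)$. Feeding this into $\clf(E)=\tfrac{d}{r}-\tfrac{2}{r}h^0(C,E)+2$ and minimizing over $d\le r(g-1)$ subject to the constraint $h^0(C,E)\ge 2r$ yields $\clf(E)\ge l-4$, with equality exactly at the character of $\cO_C(1)^{\oplus r}$ (degree $d=rl$, $h^0=3r$); this optimization is clean on $\pp$ and produces no floor correction, unlike the K3 bound. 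Combining the two inequalities gives $\clf_r(C)=l-4$.

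The main obstacle is this middle step: verifying $\sigma_{\alpha,\beta}$-semistability of $\iota_* E$ across all potential walls and extracting the sharp constant from the stronger inequality. Two features make it delicate. First, the extremal object is genuinely non-stable --- for $r=1$ the object $F$ sits in a non-split extension $0\to\cO_{\pp}(1-l)\to F\to \Omega^1_{\pp}(1)\to 0$, whose discriminant $\ch_1^2-2\ch_0\ch_2=-2l(l-3)$ is negative --- so the argument cannot simply assert semistability of $F$ and apply the inequality to it; one must bound sections of $\iota_* E$ directly and treat the boundary (Dr\'ezet--Le Potier) locus with care. Second, when $E$ is not globally generated the evaluation map fails to be surjective, so one must work with the subsheaf generated by global sections (or with the Harder--Narasimhan pieces) and check that the hypothesis $h^0\ge 2r$ survives the reduction. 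Once the sharp bound on $h^0(C,E)$ is secured, the deduction of $\clf_r(C)=l-4$ is routine.
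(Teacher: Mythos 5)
Your reverse inequality is correct and is exactly the paper's: $E=\cO_C(1)^{\oplus r}$ has $d=rl\le r(g-1)$ for $l\ge 5$, $h^0(C,E)=3r\ge 2r$, and $\clf(E)=l-4$. The forward inequality, however, is the entire content of the theorem, and your proposal does not prove it: the step you yourself flag as ``the main obstacle'' is precisely where all of the work lies, and the mechanism you sketch for it would fail. Applying the Dr\'ezet--Le Potier inequality to the cone of the evaluation map $\cO_{\pp}^{\oplus h^0}\to\iota_*E$ is illegitimate because, as you correctly observe (the extremal object has discriminant $-2l(l-3)<0$), that cone is not semistable; but you offer no replacement. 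The paper's replacement is a three-part mechanism: (i) a wall-location lemma (Lemma \ref{lem:p2phasebound}) which uses slope semistability of $E$ on $C$ to confine the slopes $\ch_2(E_i)/H\ch_1(E_i)$ of all Harder--Narasimhan factors of $\iota_*E$ at $\sigma_{0,\alpha}$, $\alpha\to 0^+$, to the interval $\left[\frac{d}{2rl}-\frac{l}{2},\frac{d}{2rl}\right]$, refined to two subintervals when $d<rl$; (ii) a section bound for each individual semistable factor (Lemma \ref{lem:p2length}): equality with $\chi$ when the slope exceeds $-\frac{3}{2}$, and the discriminant bound applied to the canonical extension --- your ``cone'' idea, but applied only where semistability actually holds --- when $\ch_2<0$; and (iii) a subadditive length function $L$ (Lemma \ref{norm}) together with convexity of the HN polygon, which assembles the per-factor bounds into the explicit estimates of Theorem \ref{thm:p2}, namely $h^0\le r+\left(\frac{3}{2l}+\frac{d}{2rl^2}\right)d$ for $d\ge rl$ and $h^0\le\max\left\{3r+d-rl,\,r+\frac{rl+r}{rl^2-d}d\right\}$ for $r(l-1)\le d<rl$. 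None of this, nor any equivalent quantitative statement, appears in your proposal.

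The concluding optimization is also not ``routine'' in the way you assert. The paper needs a genuine case analysis: for $d\ge rl$ the concave-in-$d$ lower bound for $\clf(E)$ must be checked at both endpoints $d=rl$ and $d=rl(l-3)/2$, giving $\min\left\{l-4,\frac{(l-3)^2}{4}\right\}=l-4$; in the middle range $\frac{l^2-l}{l+1}r\le d<rl$ two competing bounds must each be shown to give $>l-4$; and crucially, for $d<\frac{l^2-l}{l+1}r$ one must prove $h^0(C,E)<2r$, so that such bundles are excluded by the definition of $\clf_r$. This last exclusion is indispensable --- without it, low-degree bundles with few sections (e.g.\ $d=0$, $h^0=0$, $\clf=2$) would violate the claimed lower bound --- and your proposal, which only says the constraint $h^0\ge 2r$ must ``survive the reduction,'' does not address how to rule them out. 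In short, the strategy you name is the paper's strategy, but the proposal defers exactly the quantitative content (wall location, per-factor section bounds, the subadditivity device, and the case analysis) that constitutes the proof.
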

Another concrete example for curves on degree four del Pezzo surfaces is computed in \cite{Li:BGquintic}. The Clifford type inequality for such curves is the key ingredients in proving the existence of Bridgeland stability conditions on smooth quintic threefolds. 
\subsection{Approach}
The main tool in this paper is the notion of \textit{stability condition} introduced by Bridgeland \cite{Bridgeland:Stab}. In general, such a stability condition $\sigma=(\cA, Z)$ is defined on a $\C$-linear triangulated category $T$, and is consisting of a heart structure $\cA$ and a central charge $Z \colon K(T)\rightarrow \C$, which is a group homomorphism from the Grothendieck group to complex numbers. The space of stability conditions on $T$ forms a complex manifold which admits a wall and chamber decomposition for any fixed object $E \in T$. In this paper, the triangulated category $T$ will always be the bounded derived category $D^b(X)$ of coherent sheaves on a surface $X$. We will only make use of a real two-dimensional subspace of stability conditions on $D^b(X)$. \par

Let $\iota \colon  C\hookrightarrow X$ be the embedding of a smooth curve $C$ into the surface $X$, and let $E$ be a semistable vector bundle on the curve $C$. In \cite{Fe17}, a new upper bound for the dimension of global sections of objects in $D^b(X)$ has been introduced. This states the dimension of global sections of $\iota_*E$ can be bounded by the length of the Harder-Narasimhan polygon at a limit point $\sigma_0$ where $Z(\mathcal{O}_X) \rightarrow 0$. The Harder-Narasimhan polygon geometrically represents the slopes and degrees of the Harder-Narasimhan factors of $\iota_*E$ with respect to $\sigma_0$.  One of the key parts of the paper is to describe the position of the wall for $\iota_*E$ that bounds the large volume limit chamber at where $\iota_*E$ is stable.  Describing the wall that bounds the large volume limit will enable us to control the length of this Harder-Narasimhan polygon at $\sigma_0$ effectively and get the bound for the dimension of global sections of the vector bundle $E$.

\subsection*{Generalization} All our results hold slightly more general for a polarized K3 surface $(X, H)$ satisfying the following:
\begin{description}
\item[Assumption (*)] $H^2$ divides $H.D$ for all curve classes $D$ on $X$.
\end{description}
To simplify the presentation, we explain our entire argument in the case of Picard rank one and  then explain in Section \ref{sec:6.1} how to extend the arguments to this situation.

\subsection*{Acknowledgments} The authors would like to thank Arend Bayer for many helpful conversations. Both authors were supported by the ERC starting grant `WallXBirGeom' 337039. Chunyi Li is a Leverhulme Early Career Fellow and would like to acknowledge the Leverhulme Trust for the support.

\def\nab{\nu_{\beta, \alpha}}
\def\sab{\sigma_{\beta, \alpha}}
\def\zab{Z_{\beta, \alpha}}

\section{Review: stability conditions, wall-crossings}
\label{sect:background}
Let $(X,H)$ be a smooth polarized K3 surface over $\mathbb{C}$ with Pic$(X) = \Z H$. In this section, we review the description of a slice of the \textit{space  of stability conditions} $\Stab(X)$ on $D^b(X)$ given in \cite[Section 1-7]{Bridgeland:K3}.\par

Given an object $E \in D^b(X)$, we write $\ch(E) = (\rk(E), \ch_1(E), \ch_2(E)) \in H^*(X, \Z)$ for its Chern characters. We write $H^*_{\mathrm{alg}}(X, \Z)$ for its algebraic part, in other words, the image of $\ch(-)$. 
The slope of a coherent sheaf $E \in \Coh X$ is defined by 
\[ \mu_{H}(E) := \begin{cases}
\frac{H.\ch_1(E)}{H^2\rk(E)} & \text{if $\rk(E) > 0$} \\
+\infty & \text{if $\rk(E) = 0$}.
\end{cases}\]
This leads to the usual notion of $\mu_H$-stability. For any $\beta \in \mathbb{R}$, we have the following torsion pair in $\Coh X$
\begin{align*}
\cT^{\beta} & :=
\langle E \colon \text{$E$ is $\mu_{H}$-semistable with $\mu_{H}(E) > \beta$ } \rangle, \\
\cF^{\beta} & :=
\langle E \colon \text{$E$ is $\mu_{H}$-semistable with $\mu_{H}(E) \le \beta$} \rangle,
\end{align*}
where $\langle - \rangle$ denotes the extension-closure. Following \cite{Happel-al:tilting,Bridgeland:K3}, this lets us define a new heart of a bounded t-structure in $\Db(X)$ as follows:
\[
\Coh^\beta X := \langle \cF^\beta[1], \cT^\beta \rangle =
\stv{E}{\mH^{-1}(E) \in \cF^\beta, \mH^0(E) \in \cT^\beta, \mH^i(E) = 0 \text{ for $i \neq 0, -1$}}.
\]
For any pair $(\beta,\alpha) \in \R^2$, we define the central charge $\zab \colon K(X) \to \C$ by
\begin{equation} \label{eq:Zab}
\zab(E) := -\ch_2(E) + \alpha \rk(E)  + i \bigg(\frac{H\ch_1(E)}{H^2} -\beta \rk(E) \bigg).
\end{equation}
Note that the function $\zab$, up to the action of GL$^{+}(2;\mathbb{R})$, is the same as the stability function defined in \cite[section 6]{Bridgeland:K3}. The function $\zab$ factors via the Chern character
\begin{equation} \label{eq:ch}
\ch \colon K(X) \to \Halg \cong \Z^3 \;\;\;, \;\;\;\ch(E) = \big(\rk(E), \ch_1(E), \ch_2(E)\big).
\end{equation}
The kernel of $\zab$ in $H^*_{\mathrm{alg}}(X, \R)$ under the basis  $\{\rk,\ch_1,\ch_2\}$ is spanned by $(1,\beta H,\alpha)$. 
\begin{Def} \label{def:gcurve}
Let $\gamma:\R\rightarrow \R$ be a $1$-periodic function such that for $x\in [-\frac{1}{2},\frac{1}{2}]$ is defined as 
\[ \gamma(x) := \begin{cases}
(1-x^2) & \text{if $x\neq 0$} \\
0 & \text{if $x = 0$}.
\end{cases}\]
Let $\Gamma(x):=\frac{H^2}{2}x^2-\gamma(x)$. By abuse of notations, we also denote the graph of $\Gamma$ by \textit{curve $\Gamma$} (see Figure \ref{fig:gammacurve}). 
	\begin{figure}[h]
\begin{center}
\scalebox{0.7}{
\tikzset{%
    add/.style args={#1 and #2}{
        to path={%
 ($(\tikztostart)!-#1!(\tikztotarget)$)--($(\tikztotarget)!-#2!(\tikztostart)$)%
  \tikztonodes},add/.default={.2 and .2}}
}
\begin{tikzpicture}[scale = 12]
\newcommand\XA{0.1}
\newcommand\obe{-0.3}
\newcommand\xa{-0.9}
\newcommand\xb{0.6}

\draw[->] [opacity=\XA*5] (-0.3,0) -- (0,0) node[above right] {}-- (0.3,0) node[below right, opacity =1] {$\frac{H\ch_1}{H^2\rk}$};
\draw (-0.25,0) to (-0.25,0.01);
\node [below] at (-0.25,0) {$-1$};
\draw (0.25,0) to (0.25,0.01);
\node [below] at (0.25,0) {$1$};
\draw[->][opacity=\XA*5] (0,-.12)-- (0,0) node [above right] {O} --  (0,0.45) node[right, opacity=1] {$\frac{\ch_2}{\rk}$};
\draw (0,0.4) to (-0.004,0.4);
\node [right] at (0,0.4) {$0.4H^2$};
\draw [opacity=\XA*5](-0.25,0.5) parabola bend (0,0) (0.25,0.5);
\draw [opacity=\XA](-0.25,0.4) parabola bend (0,-0.1) (0.25,0.4);

\draw[domain=-0.125:0.125,smooth,variable=\t,blue] plot ({\t},{8*\t*\t-0.1+1.6*\t*\t});
\draw[domain=-0.26:-0.125,smooth,variable=\t,blue] plot ({\t},{8*\t*\t+0.8*\t+1.6*\t*\t});
\draw[domain=0.125:0.26,smooth,variable=\t,blue] plot ({\t},{8*\t*\t-0.8*\t+1.6*\t*\t});
\draw[blue] (0,-0.1) to (0,0);
\draw[blue] (-0.25,0.49) to (-0.25,0.5);
\draw[blue] (0.25,0.49) to (0.25,0.5);
 \node[right,blue]  at (0.25,0.4) {curve $\Gamma$};
\draw[blue] (0.25,0.4)--(0.25,0.5);
\draw[blue] (-0.25,0.4)--(-0.25,0.5);
 \node[right]  at (0.25,0.5) {$\frac{H^2}{2}x^2$};
\node[blue] at(0,0) {$\bullet$};
\node[blue] at(0.25,0.5) {$\bullet$};
\node[blue] at(-0.25,0.5) {$\bullet$};
\end{tikzpicture} 
}
\end{center}
\caption{The Gamma curve.} \label{fig:gammacurve}
\end{figure}
\end{Def}
We first state Bridgeland's result describing stability conditions on $\Db(X)$, and then expand upon the statements.
\begin{Thm}[{\cite[Section 1]{Bridgeland:K3}}] \label{thm:stabconstr}
For any pair $ (\beta,\alpha) \in \R^2$ such that $\alpha>\Gamma(\beta)$, the pair $\sab := \left(\Coh^\beta X, \zab\right)$
defines a stability condition on $D^b(X)$. Moreover, the map from $\Gamma_+:= \{(\beta,\alpha)\in \R\times \R|\alpha>\Gamma (\beta)\} \to \Stab(X)$ is continuous.
\end{Thm}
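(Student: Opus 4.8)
The plan is to verify the three defining axioms of a Bridgeland stability condition for $\sab = (\Coh^\beta X, \zab)$ — that $\Coh^\beta X$ is the heart of a bounded t-structure, that $\zab$ is a stability function on this heart satisfying the Harder--Narasimhan property, and that $\sab$ satisfies the support property, so that it is a genuine point of the complex manifold $\Stab(X)$ — and then to check continuity in $(\beta,\alpha)$. The first axiom is essentially free: since $\mu_H$-stability admits Harder--Narasimhan filtrations on the Noetherian abelian category $\Coh X$, the pair $(\cT^\beta,\cF^\beta)$ is a torsion pair, and the general tilting theorem of \cite{Happel-al:tilting} then shows that $\Coh^\beta X = \langle \cF^\beta[1], \cT^\beta\rangle$ is the heart of a bounded t-structure on $\Db(X)$. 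So the real content lies in the positivity of $\zab$ and the support property.

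For the stability function property I would first record that $\mathrm{Im}\,\zab(E) = \frac{H\ch_1(E)}{H^2} - \beta\rk(E) \ge 0$ for every nonzero $E\in\Coh^\beta X$: writing $E$ through $\mathrm{H}^0(E)\in\cT^\beta$ and $\mathrm{H}^{-1}(E)\in\cF^\beta$, the positive-rank part of $\cT^\beta$ contributes $\rk(\mu_H-\beta)>0$, the torsion part contributes $\tfrac{1}{H^2}H\ch_1\ge 0$, and the shifted part from $\cF^\beta$ contributes $\beta\rk-\tfrac{1}{H^2}H\ch_1\ge 0$ because $\mu_H\le\beta$. The delicate case is the boundary $\mathrm{Im}\,\zab(E)=0$, where one must show $\mathrm{Re}\,\zab(E)<0$. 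Here $E$ is forced to be an extension of a torsion sheaf supported in dimension zero (for which $\mathrm{Re}\,\zab=-\ch_2<0$ trivially) with $F[1]$, where $F\in\cF^\beta$ is $\mu_H$-semistable of slope exactly $\beta$; so the whole matter reduces to showing $\mathrm{Re}\,\zab(F[1])=\ch_2(F)-\alpha\rk(F)<0$, i.e. $\alpha>\ch_2(F)/\rk(F)$.

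This last inequality is exactly where the hypothesis $\alpha>\Gamma(\beta)$ and the Bogomolov--Gieseker inequality enter, and I expect it to be the main obstacle. Because $\mathrm{Pic}(X)=\Z H$, such an $F$ has $\ch_1(F)=cH$ with $c=\beta\rk(F)\in\Z$, so $\beta$ is rational and Bogomolov gives the crude bound $\ch_2(F)/\rk(F)\le\tfrac{H^2}{2}\beta^2$. The refinement to $\Gamma(\beta)=\tfrac{H^2}{2}\beta^2-\gamma(\beta)$ is the key point: the sheaves realizing the extremal value of $\ch_2/\rk$ at slope $\beta$ are rigid, and for a spherical $F$ the Mukai self-pairing $\langle v(F),v(F)\rangle=-2$ pins down $\ch_2(F)/\rk(F)=\tfrac{H^2}{2}\beta^2-(1-\tfrac{1}{\rk(F)^2})$; the curve $\Gamma$ is precisely the upper envelope of the images of these minimal spherical classes (through which it passes at $\beta=1/\rk$), so that $\alpha>\Gamma(\beta)$ forces the desired strict inequality for all $\mu_H$-semistable $F$ of slope $\beta$. (For irrational $\beta$ no such $F$ exists, the boundary case is vacuous, and correspondingly $\Gamma$ dips below the naive parabola.) Reconciling the precise piecewise shape of $\gamma$ with the lattice of spherical classes is the most technical step.

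Finally one must establish the Harder--Narasimhan property and the support property. The support property is the structural one: the Bogomolov form (equivalently the Mukai pairing) defines a quadratic form $Q$ on $\Halg$ that is negative definite on $\ker\zab$ and non-negative on the classes of $\zab$-semistable objects, which is exactly the support property and places $\sab$ in $\Stab(X)$. The HN property is subtler, since $\mathrm{Im}\,\zab$ is not discrete for irrational $\beta$; I would follow Bridgeland \cite{Bridgeland:K3} and deduce it from the Noetherianity of the tilted heart together with the a priori discriminant bound, which forbids infinite chains of destabilizing subobjects. Continuity of $(\beta,\alpha)\mapsto\sab$ on $\Gamma_+$ is then immediate from the real-analytic dependence of $\zab$ on $(\beta,\alpha)$ and Bridgeland's deformation theory, which guarantees that a continuously varying central charge with fixed support property yields a continuous path in $\Stab(X)$.
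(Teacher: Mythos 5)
Your proposal is correct and takes essentially the same route as the paper: the paper likewise delegates the tilting, Harder--Narasimhan, and deformation/continuity machinery to Bridgeland, and isolates as the only substantive verification that $Z_{\beta,\alpha}(F)\notin\R_{\leq 0}$ for spherical sheaves, which the remark following the theorem checks by exactly your key computation ($\chi(F,F)=2$ together with the Hodge index theorem yields $\ch_2(F)/\rk(F)\leq \frac{H^2}{2}\beta^2-1+\frac{1}{\rk(F)^2}\leq\Gamma(\beta)$, hence $\Re Z_{\beta,\alpha}(F)>0$). The only slip is inessential: the Mukai form is not non-negative on semistable classes as you assert (spherical objects have square $-2$), but the correct bound $v^2\geq -2$ still gives the support property after the standard modification of the quadratic form.
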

\begin{Rem}
By \cite[Lemma 6.2, Proposition 7.1]{Bridgeland:K3}, we only need to check $\zab(F)\notin \R_{\leq 0}$ for all spherical sheaves $F\in \Coh(X)$. Assume for a spherical sheaf $F$ with slope in $[-\frac{1}2,\frac{1}2]$ and some $(\beta,\alpha)$, we have $\zab(F) \in \mathbb{R}$, then by definition 
$$\beta = \dfrac{H\ch_1(F)}{H^2\rk(E)}.$$
On the other hand, the spherical sheaf $F$ satisfies 
$$
2=\chi(F,F)= 2 \rk(F)\ch_2(F)-\ch_1(F)\ch_1(F)+2(\rk(F))^2 $$
Thus by Hodge index Theorem, we have
\begin{equation}\label{in.hodge}
\dfrac{H^2}{2}\left(\dfrac{H\ch_1(F)}{H^2\rk(F)}\right)^2 \geq \dfrac{\ch_1(F)^2}{2\rk(F)^2} = \dfrac{\ch_2(F)}{\rk(F)} +1 -\dfrac{1}{\rk(F)^2}.
\end{equation}
Also by assumption we have
$$\alpha > \Gamma\left(\beta = \dfrac{H\ch_1(F)}{H^2\rk(F)}\right) \geq \dfrac{H^2}{2}\left(\dfrac{H\ch_1(F)}{H^2\rk(F)}\right)^2 -1 +\dfrac{1}{\rk(F)^2}\geq \frac{\ch_2(F)}{\rk(F)}. $$
By inequality \eqref{in.hodge}, we have $\zab(F) \in \mathbb{R}_{>0}$.  
\end{Rem}

We first explain the notion of $\sab$-stability and the associated Harder-Narasimhan filtration. Consider the slope function
\[ \nab \colon \Coh^\beta X \to \R \cup \{+\infty\}, \quad \nab (E) := \begin{cases}
-\frac{\Re \zab(E)}{\Im \zab(E)} & \text{if $\Im \zab(E) > 0$} \\
+\infty & \text{if $\Im \zab(E) = 0$.}
\end{cases}
\]
This defines a notion of stability in $\Coh^\beta X$: an object $E\in \Coh^\beta X$ is $\sab$-(semi)stable if and and only if it is (semi)stable with respect to the the slope function $\nab$. Every object $E \in \Coh^\beta X$ admits a \textit{Harder-Narasimhan filtration} which is a finite sequence of objects in $\Coh^\beta X$,
\[0=F_0\subset F_1\subset F_2\subset\dots\subset F_k=E\]
whose factors $E_i:=F_i/F_{i-1}$ are  $\sab$-semistable and $\nab(E_1)>\nab(E_2)>\dots>\nab(E_k)$. We denote $\nab^+(E):= \nab(E_1)$ and $\nab^-(E):= \nab(E_k)$. The second part of Theorem \ref{thm:stabconstr} implies that the two-dimensional family of stability conditions $\sab$ satisfies wall-crossing as $\alpha$ and  $\beta$ vary. Consider the projection 
\begin{equation*}
pr \colon H^*_{\text{alg}}(X,\mathbb{Z}) \setminus \{\rk=0\} \rightarrow \mathbb{R}^2 \;\;\;,\;\;\; pr(\ch(E))=\left(\frac{H\ch_1(E)}{H^2\rk(E)},\frac{\ch_2(E)}{\rk(E)}\right).
\end{equation*} 
By abuse of notations, we use the same plane for the image of the projection $pr$ and the $(\beta, \alpha)$-plane. Note that the point $(\beta, \alpha)$ is equal to the projection $pr(\ker \zab)$ of the kernel of the central charge $\zab$ in $H^*_{\text{alg}}(X,\mathbb{Z})$. We will also write $pr(E)$ instead of $pr(\ch(E))$.

\begin{Rem}\label{rem:stob} 
\begin{enumerate}
\item For a stable object $E$ with respect to any stability condition $\sab$, by \cite[Theorem 2.15]{BM:walls},  the point $pr(E)$ is not in $\Gamma_{+} = \{(x,y) \in \mathbb{R}^2 \colon y>\Gamma(x)\}$.
\item The slope $\nab(E)$ is just the slope of the line crossing points $(\beta,\alpha)$ and $pr(E)$.
\end{enumerate}
\end{Rem}
\begin{figure}[h]
\begin{center}
\tikzset{%
    add/.style args={#1 and #2}{
        to path={%
 ($(\tikztostart)!-#1!(\tikztotarget)$)--($(\tikztotarget)!-#2!(\tikztostart)$)%
  \tikztonodes},add/.default={.2 and .2}}
}
\scalebox{0.6}{
\begin{tikzpicture}
     \newcommand\ad{2}
     \newcommand\yd{13}

       \coordinate (O) at (0,0.2);
	\draw (15,6.7)  node [below]{ $pr(\HalgR)$};
    
    \draw[domain=-0.5:0.5,smooth,variable=\t] plot ({\t+\yd},{0.6*\t*\t-0.1-\ad});
\draw[domain=-1.5:-0.5,smooth,variable=\t] plot ({\t+\yd},{0.6*\t*\t+0.2*\t-\ad});
\draw[domain=0.5:1.5,smooth,variable=\t] plot ({\t+\yd},{0.6*\t*\t-0.2*\t-\ad});
\draw[domain=-2.5:-1.5,smooth,variable=\t] plot ({\t+\yd},{0.6*\t*\t+0.3+0.4*\t-\ad});
\draw[domain=1.5:2.5,smooth,variable=\t] plot ({\t+\yd},{0.6*\t*\t+0.3-0.4*\t-\ad});
\draw[domain=-3.5:-2.5,smooth,variable=\t] plot ({\t+\yd},{0.6*\t*\t+0.8+0.6*\t-\ad});
\draw[domain=2.5:3.5,smooth,variable=\t] plot ({\t+\yd},{0.6*\t*\t+0.8-0.6*\t-\ad});
\draw[domain=-4:-3.5,smooth,variable=\t] plot ({\t+\yd},{0.6*\t*\t+1.5+0.8*\t-\ad});
\draw[domain=3.5:4,smooth,variable=\t] plot ({\t+\yd},{0.6*\t*\t+1.5-0.8*\t-\ad});
\draw[blue] (\yd,0-\ad) to (\yd,-0.1-\ad);
\draw[blue] (\yd+1,0.5-\ad) to (\yd+1,0.4-\ad);
\draw[blue] (\yd-1,0.5-\ad) to (\yd-1,0.4-\ad);
\draw[blue] (\yd+2,2-\ad) to (\yd+2,1.9-\ad);
\draw[blue] (\yd-2,2-\ad) to (\yd-2,1.9-\ad);
\draw[blue] (\yd+3,4.5-\ad) to (\yd+3,4.4-\ad);
\draw[blue] (\yd-3,4.5-\ad) to (\yd-3,4.4-\ad);
    \coordinate (V) at (16,-3);
	\draw (V) node {$\bullet$}  node[above right]{$pr(F)$};
    \coordinate (A) at (8.5,5.3);
	\draw (A)   node[above left]{$\mathcal W_{F}^i$};
    
    \coordinate (B) at (10.5,-0.7);
	\draw (B)   node[above left]{$\mathcal W_{F}^j$};
    
    \draw [add = -0.2 and -0.1] (V) to (A);
    \draw [add = 0.1 and -0.8,dashed] (V) to (A);
    \draw [add = -0.9 and 0.1,dashed] (V) to (A);
    
    \draw [add = -0.45 and -0.2] (V) to (B);
    \draw [add = 0.1 and -0.55,dashed] (V) to (B);
    \draw [add = -0.8 and 0.2,dashed] (V) to (B);
    
	\draw[->]  (9,-1) node[below] {the chamber that $F$ is semistable} -- (13,-1);
    \draw[->]  (9,-3) node[below] {the stability condition $\sigma_{0,0+}$} -- (13,-2);
    \draw (17,5.5) node[left] {Curve $\Gamma$};
\end{tikzpicture}
}
\end{center}
\caption{Describing walls via $\Ker\zab \subset H^*(X, \R)$.} \label{fig:wallspicture}
\end{figure}

\begin{Prop}[{\cite[Proposition 9.3]{Bridgeland:K3}}] \label{prop:walls}
Fix an object $F \in D^b(X)$. There exists a collection of line segments (walls) $\cW_F^i$ in $\Gamma_+$  with the following properties:
\begin{itemize}
\item the end points of all segments are either on the curve $\Gamma$ or the on the line segment through $(n,\frac{H^2}2n^2)$ to $(n,\frac{H^2}2n^2-1)$;
\item the extension of each line segment passes through $pr(F)$ if $\rk(F) \neq 0$; otherwise it has slope $\ch_2(F)H^2/H.\ch_1(F)$;
\item the $\sab$-(semi)stability or instability of $F$ does not change when $\sab$ changes between two consecutive walls.  
\item the object $F$ is strictly $\sab$-semistable  if $(\beta,\alpha)$ is contained in one of the walls.
\item if $F$ is $\sab$-semistable in one of the adjacent chambers to a wall, then it is unstable in the other adjacent chamber. 
\end{itemize}
\end{Prop}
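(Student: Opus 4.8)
The plan is to identify the walls of $F$ with the loci in $\Gamma_+$ where some subobject of $F$ in $\Coh^\beta X$ attains the same $\nab$-slope as $F$, and to control these loci using the geometric reading of $\nab$ in Remark \ref{rem:stob}(2) together with the support property recorded in Remark \ref{rem:stob}(1).

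\emph{Numerical walls are line segments.} For a fixed class $w\in\Halg$ I would first examine the locus where $\zab(w)$ and $\zab(\ch F)$ are real-proportional, that is $\Re\zab(F)\,\Im\zab(w)-\Im\zab(F)\,\Re\zab(w)=0$. Expanding with \eqref{eq:Zab}, one checks that the two $\alpha\beta$-terms cancel, so this locus is an affine line in the $(\beta,\alpha)$-plane; equivalently, by Remark \ref{rem:stob}(2), it is exactly the set of $(\beta,\alpha)$ collinear with $pr(F)$ and $pr(w)$. This yields the first two bullet points: when $\rk F\neq0$ the line passes through the fixed point $pr(F)$, while when $\rk F=0$ the slope $\nab(F)=\ch_2(F)H^2/(H\ch_1(F))$ is constant, so the line has this fixed slope. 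The relevant portion is its sub-segment inside $\Gamma_+$, whose endpoints lie where it meets $\Gamma$ or where the potential destabilizer ceases to be a subobject in the ($\beta$-dependent) heart $\Coh^\beta X$, the latter occurring over the integer values of $\beta$, that is on the vertical segments from $(n,\tfrac{H^2}2n^2)$ to $(n,\tfrac{H^2}2n^2-1)$.

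\emph{Local finiteness.} Next I would show that only finitely many $w$ give genuine walls meeting a fixed compact set $B\subset\Gamma_+$. If $G\subset F$ is $\nab$-semistable with $\ch G=w$ and $\nab(G)=\nab(F)$ at some $\sab\in B$, then additivity of $\zab$ together with the positivity $\Im\zab\ge 0$ on $\Coh^\beta X$ forces $0\le\Im\zab(G)\le\Im\zab(F)$, which is bounded over $B$; the matching-slope condition then bounds $\Re\zab(G)$ as well, so $\abs{\zab(G)}$ is bounded uniformly over $B$. The support property of the family, reflected in Remark \ref{rem:stob}(1) as a Bogomolov--Gieseker type inequality confining the projections of the stable Jordan--H\"older factors of $G$ to the complement of $\Gamma_+$, then bounds a discriminant of $\ch G$ from below; combined with the bound on $\abs{\zab(G)}$ this confines $\ch G$ to a bounded region of the discrete lattice $\Halg\cong\Z^3$. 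Hence only finitely many classes $w$ occur, and therefore only finitely many walls meet $B$.

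\emph{Chamber structure.} The remaining bullets are then formal. In the complement of these finitely many lines each inequality $\nab(G)\lessgtr\nab(F)$ is strict and, by continuity of the family (Theorem \ref{thm:stabconstr}) together with the finiteness just proved, locally constant, so the (semi)stability of $F$ cannot change within a chamber. On a wall the witnessing subobject $G$ satisfies $\nab(G)=\nab(F)$, so $F$ is strictly $\sab$-semistable there; and writing the triangle $G\to F\to F/G$, the sign of $\nab(G)-\nab(F/G)$ reverses across the wall, so $G$ destabilizes $F$ in exactly one of the two adjacent chambers. The main obstacle is the local finiteness step: converting the bound on $\abs{\zab(G)}$ over $B$ into an honest finiteness statement for the integral class $\ch G$ is precisely where the Bogomolov--Gieseker inequality encoded by the curve $\Gamma$ and the discreteness of $\Halg$ are indispensable, and it is the technical core inherited from Bridgeland's construction.
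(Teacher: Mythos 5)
The paper does not actually prove this proposition --- it is imported verbatim from Bridgeland's K3 paper (Proposition 9.3), restated in the geometry of the $(\beta,\alpha)$-slice, with the reader referred to \cite{Fe17} for details. Your sketch is a correct reconstruction of the standard argument behind that citation --- numerical walls are affine lines through $pr(F)$ (or of fixed slope when $\rk F=0$) because the $\alpha\beta$-terms cancel, local finiteness follows from boundedness of $\zab$ on compacta plus the support property encoded by $\Gamma$ and the discreteness of $\Halg$, and the chamber structure is then formal --- so it takes essentially the same route as the source the paper relies on.
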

See Figure \ref{fig:wallspicture} for a picture and \cite{Fe17} for more details and further references. 

\section{Bounds for the dimension of global sections}
In this section, we prove the first part of Theorem \ref{main1} which introduces a new upper bound for the dimension of global sections of vector bundles on a curve over a K3 surface. We always assume $X$ is a K3 surface with Pic$(X) = \mathbb{Z}H$ and $C \in |H|$ is a smooth curve of genus $g$. We denote by $\iota:C\hookrightarrow X$ the embedding of the curve $C$ into $X$.
\subsection{The destabilizing wall for a semistable vector bundle on the curve $C$} Let $E$ be a slope semistable vector bundle on the curve $C$. By \cite[Theorem 3.11]{Maciocia:computing-the-walls}, the push-forward $\iota_*E$ is $\sigma_{\beta,\alpha}$-semistable for any $\beta \in \mathbb{R}$ and $\alpha$ sufficiently large. Suppose $\iota_*E$ becomes strictly semistable at the wall $\mathcal{W}$ which passes through  $\sigma_{0,\alpha}$ for some $\alpha>0$ and intersects the curve $\Gamma$ at $(\beta_1,\Gamma(\beta_1))$ and $(\beta_2,\Gamma(\beta_2))$ for $\beta_1< 0< \beta_2$.
\begin{Lem}\label{lem:fw}
Adopt notations as above, we have
\begin{equation*}
-1 + \frac{d}{rH^2} \leq \beta_1 \;\;\;\text{ and }\;\;\;\beta_2\leq \frac{d}{rH^2}. 
\end{equation*}
\end{Lem}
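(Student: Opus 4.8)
The plan is to translate the statement into a bound on the destabilising Jordan--H\"older factors of $\iota_*E$ on the wall $\mathcal{W}$, and to control their $\mu_{H}$-slopes using the semistability of $E$ together with the Hodge index theorem.

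First I would record the numerics. By Grothendieck--Riemann--Roch (with $H^{2}=2g-2$ and $K_{X}=0$) one gets $\ch(\iota_*E)=(0,\,rH,\,d-r(g-1))$, so $Z_{\beta,\alpha}(\iota_*E)=-(d-r(g-1))+ir$ is independent of $(\beta,\alpha)$ and $\nu_{\beta,\alpha}(\iota_*E)\equiv m:=\tfrac{d}{r}-\tfrac{H^{2}}{2}$. Thus $\mathcal{W}$ is a line of slope $m$ through $(0,\alpha)$. The key observation is that the claimed inequalities say exactly that $\beta_{1},\beta_{2}$ lie within $\tfrac12$ of $x^{*}:=\tfrac{m}{H^{2}}=\tfrac{d}{rH^{2}}-\tfrac12$, the abscissa at which lines of slope $m$ are tangent to the parabola $y=\tfrac{H^{2}}{2}x^{2}$.

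Next I would analyse the destabilising sequence $0\to\mathcal{A}\to\iota_*E\to\mathcal{B}\to0$ in $\Coh^{\beta}X$ on $\mathcal{W}$, with $\mathcal{A},\mathcal{B}$ both $\sab$-semistable of slope $m$. By Remark \ref{rem:stob}(1) the points $pr(\mathcal{A})$ and $pr(\mathcal{B})$ lie on or below $\Gamma$; since they also lie on the wall line, which is strictly above $\Gamma$ precisely over $(\beta_{1},\beta_{2})$, their abscissae lie outside this open interval. As $\mathcal{A},\mathcal{B}$ have opposite-sign ranks and $\ch_{1}$ differing by $rH$, one of them sits to the right of $\beta_{2}$ and the other to the left of $\beta_{1}$; hence it is enough to bound the two $\mu_{H}$-slopes, namely $\mu_{H}(\mathcal{A})\le \tfrac{d}{rH^{2}}$ and $\mu_{H}(\mathcal{B})\ge\tfrac{d}{rH^{2}}-1$. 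Taking the long exact cohomology sequence and using $\mathcal{H}^{-1}(\iota_*E)=0$ shows that $\mathcal{A}$ is a sheaf fitting in $0\to K\to\mathcal{A}\to\iota_*E'\to0$, where $K=\mathcal{H}^{-1}(\mathcal{B})\in\cF^{\beta}$ is torsion free and $E'\subseteq E$ is a subsheaf, while $\mathcal{H}^{0}(\mathcal{B})=\iota_*(E/E')$.

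Then comes the \emph{heart of the argument}, bounding these slopes. I would combine three inputs: (i) $\mu_{H}(K)\le\beta$ since $K\in\cF^{\beta}$; (ii) the Hodge index inequality $\ch_{2}(K)/\rk(K)\le\tfrac{H^{2}}{2}\mu_{H}(K)^{2}$, valid because $\Pic(X)=\Z H$; and (iii) the slope-semistability of $E$, which gives $\mu_{C}(E')\le \tfrac{d}{r}$ (and $\mu_{C}(E/E')\ge\tfrac{d}{r}$ for the quotient). Feeding these, together with the wall equation $\ch_{2}(\mathcal{A})/\rk(\mathcal{A})=m\,\mu_{H}(\mathcal{A})+\alpha$ and $\alpha>0$, into a single inequality should produce a quadratic constraint in $\mu_{H}(\mathcal{A})$ that forces $\mu_{H}(\mathcal{A})\le\tfrac{d}{rH^{2}}$, with the symmetric computation handling $\mathcal{B}$.

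The main obstacle is exactly this last optimisation: extracting the sharp constants $\tfrac{d}{rH^{2}}$ and $-1$. Relaxing $\Gamma$ to the bare parabola (i.e.\ using only $\gamma\ge0$) loses an $O(1)$ term, so one must use that the endpoints lie genuinely on $\Gamma$, where the spherical objects realising $\Gamma$ there (controlled by the Hodge-index computation in the remark after Theorem \ref{thm:stabconstr}) pin the constant. Dealing uniformly with destabilisers of arbitrary rank $\rk(\mathcal{A})$, and with the possibility of more than two Jordan--H\"older factors on $\mathcal{W}$, is where most of the care is needed.
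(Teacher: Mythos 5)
Your setup is consistent with the paper: the Chern character $\ch(\iota_*E)=(0,rH,d-r(g-1))$, the wall slope $m=\tfrac dr-\tfrac{H^2}2$, the reformulation of the claim as ``$\beta_1,\beta_2$ lie within $\tfrac12$ of the tangency abscissa $\tfrac{d}{rH^2}-\tfrac12$'', and the structure $0\to K\to\mathcal{A}\to\iota_*E'\to0$ with $\mathcal{H}^0(\mathcal{B})=\iota_*(E/E')$ all match. The gap is the reduction that follows. You replace the claim about $\beta_1,\beta_2$ (the abscissae where the wall line meets $\Gamma$) by the stronger claim $\mu_{H}(\mathcal{A})\le\tfrac{d}{rH^2}$ together with its analogue for $\mathcal{B}$, and these stronger claims are false. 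The wall geometry only tells you that $pr(\mathcal{A})$ lies on the wall line outside $\Gamma_+$, i.e.\ at abscissa $\ge\beta_2$; nothing forces it to lie near $\beta_2$, since the line can dive under $\Gamma$ long before reaching $pr(\mathcal{A})$. Indeed, the destabilizers the paper must handle have $\ch(\mathcal{A})=(s,H,*)$ with $s\le r$ (see Lemma \ref{lem:bound for beta1} and the cases $\ch_1(\tilde E_1)=H$ in the proof of the second part of Theorem \ref{main1}), so $\mu_{H}(\mathcal{A})=\tfrac1s\ge\tfrac1r$, which exceeds $\tfrac{d}{rH^2}$ whenever $d<2g-2$; the sharp example $\mathcal{A}=\tilde E_r$ of Theorem \ref{thm:constofer} gives equality exactly at $d=2g-2$. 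So the lemma cannot be proved by bounding the two slopes separately.

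What is missing is the paper's key input, which bounds the \emph{difference} of the two slopes instead of each one. Exactness of $0\to K\to\mathcal{A}\to\iota_*E\to\iota_*(E/E')\to0$ at the term $\iota_*E$ forces the curve-rank inequality $\rk_C(E')\le s+t$, where $s=\rk(\mathcal{A})$ and $tH=\ch_1(T(\mathcal{A}))$: the curve can only receive rank from what $\iota^*\mathcal{A}$ supplies. Numerically this says $\mu_{H}\big(\mathcal{A}/T(\mathcal{A})\big)-\mu_{H}(K)\le1$; combined with the heart/semistability conditions along the wall, $\mu_{H}(K)\le\beta_1$ and $\mu_{H}\big(\mathcal{A}/T(\mathcal{A})\big)\ge\beta_2$, it yields $\beta_2-\beta_1\le1$, and then the fixed slope $m$ pins the extremal configuration, which is exactly your tangency picture. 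Your ingredients (i)--(iv) cannot substitute for this input: taking $K=\mathcal{O}_X(-H)$ and $E'=E$ satisfies (i), (ii), (iii) (the last with equality) and the wall equation with $\alpha=\tfrac dr>0$, yet $\mu_{H}(\mathcal{A})=r-1\ge1$; hence no algebraic manipulation of (i)--(iv) can produce the bound. (Two smaller points: your (ii) is Bogomolov--Gieseker rather than Hodge index and fails for non-semistable $K\in\cF^{\beta}$, e.g.\ $K=\mathcal{O}_X\oplus\mathcal{O}_X(-H)$, so it must be applied to HN factors; and (iii), the slope semistability of $E$, is never used in this lemma, as it only guarantees that $\iota_*E$ is semistable in the large volume limit so that the wall $\mathcal{W}$ exists.)
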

\begin{proof}
Let $0\rightarrow F_2\rightarrow \iota_*E\rightarrow F_1\rightarrow 0$ in $\Coh^0X$ be the destabilizing sequence at the wall $\cW$, then there is an exact sequence in $\Coh X$:
\begin{center}
\begin{tikzcd}[row sep=tiny]
0\arrow[r] & \HH^{-1}(F_1) \arrow[r] 
& F_2 \arrow[r] & \iota_*E \arrow[r] & \HH^{0}(F_1) \arrow[r] & 0. \\
\text{rank} & s & s & 0 & 0 \\
\ch_1 & d_1H & d_2H & rH & aH
\end{tikzcd}
\end{center}

If $s=0$, then since $F_2$ and $\iota_*E$ have the same phase with respect to $\sigma_{0,\alpha}$, we must have $\ch(\iota_*E) = k\ch(F_2)$ for some real number $k \neq 0$ and $F_2$ cannot make a wall for $\iota_*E$. Thus, we may assume $s \neq 0$. 
Let $T(F_2)$ be the maximal torsion subsheaf of $F_2$ and $\ch_1(T(F_2)) = tH$. Since $E$ is of rank $r$, to make the sequence exact at the term $\iota_*E$, we must have 
\begin{equation*}
r-a \leq \rank \big(\iota^*T(F_2)\big) + \rank\big(\iota^* F_2/T(F_2)\big) = s+t. 
\end{equation*}
Therefore, 
\begin{equation}\label{in.difference of slopes}
\dfrac{\ch_1\big(F_2/T(F_2)\big).H}{sH^2} - \dfrac{\ch_1\big(H^{-1}(F_1)\big).H}{sH^2} = \dfrac{d_2-t -d_1}{s} = \dfrac{r-a-t}{s} \leq 1.
\end{equation}
By Proposition \ref{prop:walls}, the object $F_1$ is semistable of the same phase as $\iota_*E$ along the line segment $\mathcal{W}$, in particular it is in the heart $\Coh^{\beta_1+\epsilon}X$ where $\epsilon \rightarrow 0^{+}$. Thus by definition of the tilting heart,
\begin{equation}
\dfrac{\ch_1(H^{-1}(F_1)).H}{H^2s} = \dfrac{d_1}{s} \leq \beta_1.\label{in:beta1}
\end{equation}
Therefore inequality \eqref{in.difference of slopes} implies
\begin{equation}\label{eq:4}
\dfrac{\ch_1(F_2/T(F_2)).H}{H^2s} \leq 1+\beta_1.  
\end{equation}
Similarly, Proposition \ref{prop:walls} gives
\begin{equation}
\frac{H\ch_1(F_2/T(F_2))}{H^2s} \geq \beta_2. 
\end{equation}
Therefore, $\beta_2 - \beta_1 \leq 1$. By the second property of Proposition \ref{prop:walls}, the slope of $\mathcal{W}$ as a line in the projection $pr(\HalgR)$ is
\begin{equation}
\label{eq5}\frac{\Gamma(\beta_2)-\Gamma(\beta_1)}{\beta_2-\beta_1}=\frac{H^2\ch_2(\iota_*E)}{H\ch_1(\iota_*E)}=-\frac{H^2}{2}+\frac{d}{r}.
\end{equation} It is not hard to see that $\beta_2$ (respectively $\beta_1$) reaches its maximum $\beta_2^{\max}$ (respectively minimum $\beta_1^{\min}$) when $\beta_2-\beta_1=1$. Substitute this to (\ref{eq5}), we get $$\Gamma(\beta_1^{\min}+1)-\Gamma(\beta_1^{\min})=-\frac{1}{2}+\frac{d}{rH^2}.$$ Solving the equation, we get  $\beta_1^{\min}=\frac{d}{rH^2}-1$ and $\beta_2^{\max}=\frac{d}{rH^2}$.
\end{proof}
We need the following description for the first wall in details. 
\begin{Lem}\label{lem:bound for beta1}
	Adopt notations from Lemma \ref{lem:fw}.
	\begin{enumerate}
       \item We always have $\beta_1\geq -1+\frac{1}{r}$.
		\item When $d\leq 2g-2+r$, we have $\beta_2\leq \frac{1}r$. 
        \item When $r=3$, we either have the Chern characters $\ch(F_2)=(3,H,-)$ or $\beta_1\geq -\frac{1}2$.
	\end{enumerate}
\end{Lem}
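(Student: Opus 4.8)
All three parts run on the numerical data isolated in the proof of Lemma \ref{lem:fw}. Write $s=\rk(F_2)\geq 1$ and let $d_1,d_2,t,a\in\Z$ be the $H$-coefficients of $\ch_1$ of $\HH^{-1}(F_1),\ F_2,\ T(F_2),\ \HH^{0}(F_1)$, so that $a,t\geq 0$, $d_2-t\geq 1$ (as $F_2/T(F_2)$ is torsion free of positive slope), and $d_2-d_1=r-a$. I will use $H^2=2g-2$, the bound $\tfrac{d_1}{s}\leq\beta_1$ of \eqref{in:beta1}, the bound $\tfrac{d_2-t}{s}\leq 1+\beta_1$ of \eqref{eq:4}, the companion bound $\tfrac{d_2-t}{s}\geq\beta_2$, and the identity $\tfrac{(d_2-t)-d_1}{s}=\tfrac{r-a-t}{s}\leq 1$ of \eqref{in.difference of slopes}; since $a,t\geq 0$ this last gives $d_1\geq(d_2-t)-r\geq 1-r$. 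Finally, evaluating $\Im Z_{0,\alpha}$ on the inclusion $F_2\hookrightarrow\iota_*E$ in $\Coh^0 X$ gives $0\leq d_2\leq r$, and since $\beta_1<0$ forces $d_1\leq-1$ we also get $1\leq d_2\leq r-1$.

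For part (1) I would split on the size of $s$. If $s\leq r$, then $\tfrac{d_2-t}{s}\geq\tfrac1s\geq\tfrac1r$, so \eqref{eq:4} gives $\beta_1\geq-1+\tfrac1r$. If $s>r$, then using $d_1\geq 1-r$ and $1-r\leq 0$, \eqref{in:beta1} gives $\beta_1\geq\tfrac{d_1}{s}\geq\tfrac{1-r}{s}\geq\tfrac{1-r}{r}=-1+\tfrac1r$. Part (3) is the same dichotomy sharpened for $r=3$, where now $d_1\geq-2$: for $s\in\{1,2\}$, \eqref{eq:4} gives $\beta_1\geq\tfrac1s-1\geq-\tfrac12$; for $s\geq 4$, \eqref{in:beta1} gives $\beta_1\geq\tfrac{-2}{s}\geq-\tfrac12$; and for $s=3$, \eqref{in:beta1} gives $\beta_1\geq\tfrac{d_1}{3}$, which exceeds $-\tfrac12$ unless $d_1=-2$. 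In that single leftover case $d_2=1-a$ together with $d_2-t\geq 1$ and $a,t\geq 0$ forces $a=t=0$ and $d_2=1$, so $F_2$ is a torsion free sheaf with $\ch(F_2)=(3,H,-)$, exactly the exceptional class.

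Part (2) is the delicate one and is where I expect the real work. By the companion bound I may assume $\tfrac{d_2-t}{s}>\tfrac1r$, since otherwise $\beta_2\leq\tfrac{d_2-t}{s}\leq\tfrac1r$. The plan is to prove that the wall line $\cW$ lies on or below $\Gamma$ at $\beta=\tfrac1r$: as $\cW$ passes through $\sigma_{0,\alpha}$ with height $\alpha>0=\Gamma(0)$ while $\Gamma$ is convex on $(0,\tfrac12]$, once $\cW\leq\Gamma$ at the point $\tfrac1r\in(0,\tfrac12]$ the larger intersection abscissa satisfies $\beta_2\leq\tfrac1r$. Since $\cW$ is the line through $pr(F_2)=\big(\tfrac{d_2}{s},\tfrac{\ch_2(F_2)}{s}\big)$ of slope $m=\tfrac{d}{r}-\tfrac{H^2}{2}$ (by \eqref{eq5}), with $\tfrac{d_2}{s}\geq\tfrac{d_2-t}{s}>\tfrac1r$, everything reduces to how far below $\Gamma$ the point $pr(F_2)$ sits. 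The naive input $pr(F_2)\notin\Gamma_+$ of Remark \ref{rem:stob} only places $pr(F_2)$ weakly below $\Gamma$, which together with $d\leq 2g-2+r$ merely reproduces Lemma \ref{lem:fw}'s estimate $\beta_2\leq\tfrac{d}{rH^2}\leq\tfrac1r+\tfrac1{H^2}$, overshooting by exactly $\tfrac1{H^2}$.

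The missing ingredient, and the crux, is integrality. If $pr(F_2)$ lay on $\Gamma$ at the rational abscissa $\tfrac{d_2}{s}$, then $\ch_2(F_2)=s\,\Gamma(\tfrac{d_2}{s})$ would typically fail to be an integer; forcing $\ch_2(F_2)\in\Z$ thus produces a definite downward defect of $pr(F_2)$ below $\Gamma$, and because the slope $m$ is controlled by the hypothesis this defect is precisely what drags $\beta_2$ below $\tfrac1r$. I would make this quantitative by writing $\Gamma=\tfrac{H^2}{2}x^2-\gamma$ with $\gamma$ the $1$-periodic bump, bounding the defect from below by a suitable multiple of $\tfrac1s$ using $1\leq d_2\leq r-1$, and comparing with the window length $\tfrac1{H^2}$. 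The main obstacle will be controlling the periodic term $\gamma$ when $\tfrac{d_2}{s}$ is extremely close to $\tfrac1r$, where the relevant secant denominators become tiny; in the genuinely degenerate configurations I would instead invoke the alternative endpoint description of Proposition \ref{prop:walls}, in which a wall endpoint lies on a vertical segment over an integer rather than on $\Gamma$ itself.
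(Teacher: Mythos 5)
Your parts (1) and (3) are correct and essentially coincide with the paper's own proof: the same dichotomy on $s=\rk(F_2)$, with \eqref{eq:4} handling small $s$ and \eqref{in:beta1} handling large $s$, and for $r=3$ the same isolation of the residual case $s=3$, $d_1=-2$, which forces $\ch(F_2)=(3,H,-)$.

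Part (2), which is where the real content of the lemma lies, has a genuine gap: beyond being only a plan (the quantitative estimate is never carried out, and you yourself flag an unresolved obstacle when $d_2/s$ is near $1/r$), the mechanism you propose provably cannot suffice. Integrality of $\ch_2(F_2)$ forces no downward defect of $pr(F_2)$ below $\Gamma$ whatsoever, because integral classes can lie \emph{exactly on} $\Gamma$. For $x=d_2/s\in(0,\tfrac12]$ one has $s\,\Gamma(d_2/s)=\tfrac{g d_2^2}{s}-s$, an integer whenever $s\mid g d_2^2$; concretely, for any divisor $s$ of $g$ with $2\leq s<r$, the spherical class $(s,H,\tfrac{g}{s}-s)$ sits exactly on $\Gamma$ at abscissa $\tfrac1s>\tfrac1r$, and a stable bundle with this class exists. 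Every input you deploy in part (2) --- the companion bound, $pr(F_2)\notin\Gamma_+$, integrality of $\ch(F_2)$, the wall slope from \eqref{eq5}, and $1\leq d_2\leq r-1$ --- is consistent with $F_2$ having this class, in which case the wall would terminate at $\beta_2=\tfrac1s>\tfrac1r$. Your fallback (wall endpoints on vertical segments over integers) does not help, since $1/s$ is not an integer. So no refinement of the ``defect'' estimate can close the argument.

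What rules such configurations out --- and what the paper actually does --- is a sharpened bound on the \emph{other} endpoint, coming from $\ch_1$-integrality rather than $\ch_2$-integrality. Unless $\ch_1(F_2/T(F_2))=H$ with $s\geq r$ (your trivial reduction, where $\beta_2\leq\tfrac{d_2-t}{s}\leq\tfrac1r$ at once), the inequalities \eqref{eq:4} and \eqref{in:beta1} upgrade part (1) to $\beta_1\geq\tfrac{2-r}{r-1}$: either $d_2-t=1$ with $s\leq r-1$, or $d_2-t\geq 2$. Since the wall's slope is pinned by \eqref{eq5}, raising $\beta_1$ lowers $\beta_2$, so the proof reduces to one explicit check: the secant slope of $\Gamma$ between $\tfrac{2-r}{r-1}$ and $\tfrac1r$ exceeds the wall slope whenever $d\leq 2g-2+r$, which is a direct computation with $\Gamma(x)=gx^2-1$ on the relevant range (and its periodic continuation). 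In short, the leverage in part (2) comes from $\beta_1$ through $\ch_1$, a constraint you listed at the outset but never used there; without it the statement is not provable from your inputs.
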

\begin{proof}
Adopt the notations as in the proof of Lemma \ref{lem:fw}.

 (\textit{a}):  We know $ \frac{\ch_1(F_2/T(F_2))H}{H^2} \geq 1$, so $\frac{\ch_1(H^{-1}(F_1))H}{H^2} \geq 1-r$. If $s = \rk(F_2)\leq r$, then $\frac{H\ch_1(F_2/T(F_2))}{H^2s} \geq \frac{1}{r}$ and by inequality (\ref{eq:4}), we have $\beta_1\geq \frac{2-r}{r}$. If $s>r$, then $\beta_1\geq \frac{\ch_1(F_1)H}{H^2s} \geq \frac{1-r}{r}$. \\  
	
	(\textit{b}):  If $\ch_1(F_2/T(F_2)) = H$ and $s = \rk(F_2) \geq r$, then $ \beta_2 \leq \frac{H\ch_1(F_2/T(F_2))}{H^2s} \leq \frac{1}{r}$ and the claim follows. Thus, we may assume that either $\ch_1(F_2/T(F_2)) = H$ with $s<r$ or $\frac{\ch_1(F_2/T(F_2))H}{H^2} \geq 2$. In any of these two cases, we first show that $\beta_1 \geq \frac{2-r}{r-1}$. 

If $\ch_1(F_2/T(F_2)) = H$ and $s = \rk(F_2) < r$, then $\beta_1 \geq \frac{\ch_1((F_2/T(F_2))H}{H^2s} \geq \frac{1}{r-1}$ which gives $\beta_1 \geq \frac{2-r}{r-1}$. If $ \frac{\ch_1(F_2/T(F_2))H}{H^2} \geq 2$, then $\frac{\ch_1(F_1)H}{H^2} \geq 2-r$. Thus, if $s = \rk(F_2)\leq r$, then $\frac{H\ch_1(F_2/T(F_2))}{H^2s} \geq \frac{2}{r}$ and by inequality (\ref{eq:4}), we have $\beta_1\geq \frac{2-r}{r}$ and if $s>r$, then $\beta_1\geq \frac{\ch_1(H^{-1}(F_1))H}{H^2s} \geq \frac{2-r}{r}$.

	Note that by (\ref{eq5}), when $\beta_1$ increases, $\beta_2$ will decrease. We only need to show that when $d\leq 2g+r$,
	$$\frac{\Gamma\left(\frac{1}r\right)-\Gamma\left(\frac{2-r}{r-1}\right)}{\frac{1}r-\frac{2-r}{r-1}}\geq -\frac{1}{2}+\frac{d}{rH^2}.$$
	The left hand side is equal to
	\begin{align*}
	& \frac{1}2\left(\frac{2-r}{r-1}+\frac{1}r\right)+\frac{1}{H^2}\frac{\frac{1}{r^2}-\frac{1}{(r-1)^2}}{\frac{1}r+\frac{r-2}{r-1}} \\
	= & -\frac{1}2+\frac{1}r+\frac{1}{2r(r-1)}-\frac{2r-1}{H^2r(r-1)(r^2-r-1)} \\
	\geq & -\frac{1}2+\frac{1}r+\frac{1}{H^2}\geq -\frac{1}{2}+\frac{d}{rH^2}.
	\end{align*}
	The first inequality in the last line holds as $r\geq 3$ when $0>\beta_1\geq \frac{2-r}{r-1}$.

 (\textit{c}): Denote $d_1=\frac{D_1H}{H^2}$ and $d_2=\frac{D_2H}{H^2}$. If  $d_2\geq 2$, then we must have $d_1= -1$. Thus by  (\ref{in.difference of slopes}), we have $s\geq 2$ and inequality (\ref{in:beta1}) implies that $\beta_1\geq -\frac{1}2$. Thus we may assume $d_2=1$. If $s<3$, then $\frac{H\ch_1(F_2)}{H^2\rk(F_2)}\geq \frac{1}{2}$ (note that we must have $\ch_1(T(F_2))=0$) and inequality (\ref{eq:4}) gives $\beta_1\geq -\frac{1}{2}$. When $s>3$, by (\ref{in:beta1}), we have $\beta_1\geq \frac{d_2}{s}\geq \frac{-2}{4}=-\frac{1}2$.
\end{proof}

\subsection{An upper bound on the dimension of global sections}
We first recall the result in \cite[Section 3]{Fe17}. Define the function $\overline{Z} \colon K(X) \rightarrow \mathbb{C}$ as 
\begin{equation*}
\overline{Z}(F) = \ch_2(F) + i \frac{\ch_1(F).H}{H^2}.
\end{equation*}
We also define the following non-standard norm on $\mathbb{C}$: 
\begin{equation*}
\lVert x+iy \rVert = \sqrt{x^2 + (2H^2+4)y^2}. 
\end{equation*} 
The next proposition bounds the dimension of global sections of objects in terms of the length of a polygon. 
 \begin{Prop}[{\cite[Proposition 3.4]{Fe17}}]\label{prop:bound for global sections} 
	Let $F \in \Coh^0X$ be an object which has no subobject $F' \subset F$ with $\ch_1(F') = 0$. 
	\begin{itemize*}
		\item[(a)] There exists $\epsilon >0 $ such that the Harder-Narasimhan filtration of $F$ is a fixed sequence
		\begin{equation*}\label{HN}
		0 = \tilde{E}_{0} \subset \tilde{E}_{1} \subset .... \subset \tilde{E}_{n-1} \subset  \tilde{E}_n=F,
		\end{equation*}
		for all stability conditions $\sigma_{0,\alpha}$ where $0< \alpha < \epsilon$.
		\item[(b)] Let $p_i \coloneqq  \overline{Z}(\tilde{E_i})$ for $0 \leq i \leq n$, then
		$$
		h^0(X,F) \leq \dfrac{\chi(F)}{2}  + \dfrac{1}{2} \sum_{i=1}^{n} \lfloor \lVert p_ip_{i-1} \rVert \rfloor
		$$
		where $\lfloor \lVert p_ip_{i-1} \rVert \rfloor$ is the integer part of the length of the line segment $p_ip_{i-1}$ and $\chi(F)$ is the Euler characteristic of $F$.
	\end{itemize*} 	
\end{Prop}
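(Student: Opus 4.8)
This proposition is quoted from \cite[Proposition 3.4]{Fe17}; here is how I would reconstruct its proof using only the machinery set up above. For part (a) the essential input is the local finiteness of the wall-and-chamber structure in Proposition \ref{prop:walls}: the fixed object $F$ has only finitely many walls inside $\Gamma_+$, so only finitely many of them meet the vertical ray $\{(0,\alpha):\alpha>0\}$. Taking $\epsilon>0$ below the lowest such intersection, the stability conditions $\sigma_{0,\alpha}$ with $0<\alpha<\epsilon$ all lie in one chamber, and the Harder--Narasimhan filtration of $F$ is constant there. The hypothesis that $F$ has no subobject $F'$ with $\ch_1(F')=0$ is used exactly here: since $\Pic(X)=\Z H$ we have $\ch_1=0\iff H\ch_1=0\iff \Im Z_{0,\alpha}=0$, so it forbids an infinite-slope (maximal) factor and guarantees that every increment $p_i-p_{i-1}=\overline{Z}(\tilde E_i/\tilde E_{i-1})$ has strictly positive imaginary part.

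For part (b) I would reduce to a single semistable factor. Writing $Q_i:=\tilde E_i/\tilde E_{i-1}$ and applying $\Hom(\cO_X,-)$ to the triangles $\tilde E_{i-1}\to\tilde E_i\to Q_i$, left-exactness gives the sub-additivity $h^0(X,F)\le\sum_i h^0(X,Q_i)$. Next I would record that $h^2(X,A)=0$ for every $A\in\Coh^0X$: by Serre duality $h^2(X,A)=\dim\Hom(A,\cO_X)$, and since $\cO_X\in\cF^0$ forces $\cO_X[1]\in\Coh^0X$, we get $\Hom(A,\cO_X)=\Ext^{-1}(A,\cO_X[1])=0$ because negative Ext's vanish between objects of a heart. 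Hence Riemann--Roch reads $h^0(X,A)-h^1(X,A)=\chi(A)=\ch_2(A)+2\rk(A)$. As $\chi$ and $\overline{Z}$ are additive with $\chi(F)=\sum_i\chi(Q_i)$, the whole statement follows once one proves, for every $\sigma_{0,\alpha}$-semistable $A\in\Coh^0X$ of finite slope,
\[
h^0(X,A)+h^1(X,A)\ \le\ \lVert\overline{Z}(A)\rVert .
\]
Replacing the right side by its floor is harmless since the left side is an integer, and summing this per-factor estimate reproduces $\tfrac{\chi(F)}{2}+\tfrac12\sum_i\lfloor\lVert p_ip_{i-1}\rVert\rfloor$ exactly.

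The per-factor inequality is the crux. Set $c:=H\ch_1(A)/H^2$, so $\lVert\overline{Z}(A)\rVert^2=\ch_2^2+(2H^2+4)c^2$; using $h^0-h^1=\ch_2+2\rk$ and $2H^2+4=4g$, squaring turns the bound into the equivalent form
\[
\rk(A)\ch_2(A)+\rk(A)^2+h^0(X,A)\,h^1(X,A)\ \le\ g\,c^2 .
\]
The Bogomolov--Gieseker inequality for $\sigma_{0,\alpha}$-semistable objects, namely $pr(A)\notin\Gamma_+$ from Remark \ref{rem:stob}(1) together with $\Gamma(x)\le\tfrac{H^2}{2}x^2$, gives $\rk\ch_2\le(g-1)c^2$, so (for $\rk>0$) it suffices to prove the Clifford-type product bound $\rk(A)^2+h^0(X,A)\,h^1(X,A)\le c^2$, the $\rk=0$ case requiring $h^0h^1\le g\,c^2$ directly. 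For the cross-term I would use Serre duality to read $h^1(X,A)=\hom(A,\cO_X[1])$ and $h^0(X,A)=\hom(\cO_X,A)$, note that the Yoneda composition $\Hom(\cO_X,A)\otimes\Hom(A,\cO_X[1])\to\Ext^1(\cO_X,\cO_X)=H^1(X,\cO_X)=0$ vanishes, and bound the product by a Clifford/universal-extension argument against the spherical object $\cO_X[1]$, which carries the maximal $\sigma_{0,\alpha}$-slope, exploiting semistability of $A$.

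I expect this cross-term estimate to be the main obstacle: the numerical Bogomolov--Gieseker inequality controls the diagonal terms $\rk\ch_2$ and $\rk^2$ but not the product $h^0h^1$, and one must genuinely invoke $\sigma_{0,\alpha}$-semistability (equivalently $\mu_H$-semistability in the limit) to prevent sections and co-sections from being simultaneously abundant. That the bound is sharp, and the method calibrated correctly, is confirmed by $A=\cO_X(H)$, where $\rk=1$, $c=1$, $h^1=0$, $h^0=g+1$, and every inequality above becomes an equality.
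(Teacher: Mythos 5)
Your skeleton for part (b) --- splitting $h^0$ over the Harder--Narasimhan factors, proving $h^2=0$, invoking Riemann--Roch, and recovering the floors from integrality --- is sound and is essentially how the proof in \cite{Fe17} is organized. But the step you yourself flag as the crux is not only unproven: the statement you reduce it to is \emph{false}, so this route cannot be completed. After using the Bogomolov--Gieseker-type bound $\rk(A)\ch_2(A)\le (g-1)c^2$ you claim it suffices to show $\rk(A)^2+h^0(A)h^1(A)\le c^2$. Any $\sigma_{0,\alpha}$-semistable object of finite slope with $|\rk|\ge 2$ and $c=1$ violates this, regardless of what $h^0,h^1$ are. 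Such objects are not exotic --- they are the extremal objects of this very paper: the slope-stable bundle $\tilde{E}_r$ with $\ch(\tilde{E}_r)=\left(r,H,\fgr-r\right)$ lies in $\Coh^0X$ and is $\sigma_{0,\alpha}$-stable for all $\alpha>0$ (a potential destabilizing subobject with $\ch_1=H$ is an extension of a subsheaf of $\tilde E_r$ by an object of $\cF^0$; slope stability of $\tilde E_r$ and the fact that a torsion-free sheaf in $\cF^0$ with $H\ch_1=0$ has $\ch_2\le 0$ rule out every numerical possibility), and it appears as the first HN factor in the paper's own analysis (Step 2, Case I of the proof of Theorem \ref{main1}), saturating the bound of the Proposition. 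For it, $\rk^2+h^0h^1\ge r^2>1=c^2$, while the true per-factor inequality $\rk\ch_2+\rk^2+h^0h^1=r\fgr\le g\,c^2$ holds, with equality when $r\mid g$: the slack $r^2-1$ in the BG step is exactly what absorbs $\rk^2-c^2$. So the two terms cannot be decoupled, and no Yoneda argument will help --- the pairing $\Hom(\cO_X,A)\otimes\Hom(A,\cO_X[1])\to\Ext^1(\cO_X,\cO_X)=0$ only says each composite is zero in a zero group, which constrains nothing.

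The mechanism that actually works (in \cite{Fe17}, and reproduced in this paper for $\mathbf{P}^2$ in Lemma \ref{lem:p2length}) applies the Mukai inequality not to $A$ but to the canonical extension $0\to A\to K\to \cO_X[1]\otimes\Ext^1(\cO_X[1],A)\to 0$, which is semistable on the wall where $A$ and $\cO_X$ have the same phase; since $\ch(K)=\big(\rk(A)-h^0(A),\ch_1(A),\ch_2(A)\big)$, the inequality $v(K)^2\ge -2$ is a quadratic in $h^0(A)$ whose solution is $h^0(A)\le \tfrac{\chi(A)}{2}+\tfrac12\sqrt{\ch_2(A)^2+2H^2c^2+4}\le \tfrac{\chi(A)}{2}+\tfrac12\lVert\overline{Z}(A)\rVert$ (the last step using $c\ge1$). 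Here $h^0$ enters through the \emph{rank} of $K$, so it is coupled to the full quadratic form --- precisely the coupling your splitting destroys. Two further gaps: in part (a), your claim that $F$ "has only finitely many walls inside $\Gamma_+$" misreads Proposition \ref{prop:walls}; walls are only locally finite, and the ray $\{(0,\alpha):\alpha>0\}$ leaves every compact subset of $\Gamma_+$ as $\alpha\to0^+$, limiting to the degenerate boundary point where $Z_{0,\alpha}(\cO_X)\to 0$, so ruling out accumulation of walls along this ray (via discreteness and boundedness of the classes of possible destabilizers, using $0<H\ch_1(F')\le H\ch_1(F)$ --- this is where the hypothesis earns its keep) is the actual content of (a). And in your Riemann--Roch step you implicitly use $\hom(\cO_X,A[-1])=0$; this fails for general $A\in\Coh^0X$ (e.g.\ $A=\cO_X[1]$) and must be deduced from semistability of the factors, since $\cO_X[1]$ has infinite $\nu_{0,\alpha}$-slope.
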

We denote by $P_F$ the polygon with the extremal points $\{p_0,p_1,...,p_n\}$ which is a convex polygon.

Let $E$ be a slope semistable rank $r$-vector bundle on the curve $C$ of degree $d$. 
Proposition \ref{prop:bound for global sections} implies that there exists $\epsilon >0$ such that the Harder-Narasimhan filtration of $\iota_*E$ with respect to the stability condition $\sigma_{0,\alpha}$ for positive $\alpha<\epsilon$ is a fixed sequence 
\begin{equation*}
0 = \tilde{E}_{0} \subset \tilde{E}_{1} \subset .... \subset \tilde{E}_{n-1} \subset  \tilde{E}_n=\iota_*E.
\end{equation*}
Consider the triangle $opq$ where $o$ is the origin, $q = \overline{Z}(\iota_*E)$, the slope of $\overline{op}$ is equal to $\beta_2/ \Gamma(\beta_2)$ and the slope of $\overline{pq}$ is $\beta_1/\Gamma(\beta_1)$, where the real numbers $\beta_1$ and $\beta_2$ are defined as in Lemma \ref{lem:fw}.
	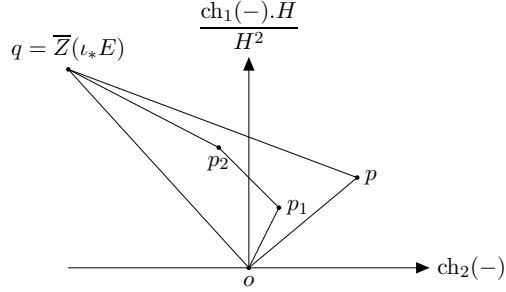
\begin{figure} [h]
	\begin{centering}
		\definecolor{zzttqq}{rgb}{0.27,0.27,0.27}
		\definecolor{qqqqff}{rgb}{0.33,0.33,0.33}
		\definecolor{uququq}{rgb}{0.25,0.25,0.25}
		\definecolor{xdxdff}{rgb}{0.66,0.66,0.66}
		\scalebox{0.8}{
		\begin{tikzpicture}[line cap=round,line join=round,>=triangle 45,x=1.0cm,y=1.0cm]

		\draw[->,color=black] (0,0) -- (0,3.5);
		\draw[->,color=black] (-3,0) -- (3,0);
		\draw[color=black] (0,0) -- (-3,3.3);
		\draw[color=black] (0,0) -- (1.8,1.5);
		\draw[color=black] (1.8,1.5) -- (-3,3.3);
		
		\draw[color=black] (0,0) -- (.5,1);
		\draw[color=black] (-.5,2) -- (-3,3.3);
		\draw[color=black] (-.5,2) -- (.5,1);
		
		\draw (0,0) node [below] {$o$};
		\draw (3,0) node [right] {$\ch_2(-)$};
		\draw (0,3.5) node [above] {$\dfrac{\ch_1(-).H}{H^2}$};
		\draw (1.8,1.5) node [right] {$p$};
		\draw (-3,3.3) node [above] {$q=\overline{Z}(\iota_*E)$};
		\draw (.5,1) node [right] {$p_1$};
		\draw (-.5,2) node [below] {$p_2$};
		
		\begin{scriptsize}
		
		\fill [color=black] (0,0) circle (1.1pt);
		\fill [color=black] (-3,3.3) circle (1.1pt);
		\fill [color=black] (1.8,1.5) circle (1.1pt);
		\fill [color=black] (.5,1) circle (1.1pt);
		\fill [color=black] (-.5,2) circle (1.1pt);

		\end{scriptsize}
		
		\end{tikzpicture}
		}
		\caption{The polygon $P_{\iota_*E}$ is inside the triangle $opq$}
		
		\label{wall.4}
		
	\end{centering}
	
\end{figure}
\begin{Lem}\label{lem:polygon} \label{lem:ph}
	The polygon $P_{\iota_*E}$ is contained in the triangle $opq$. 	
\end{Lem}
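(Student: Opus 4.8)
The plan is to use the convexity of $P_{\iota_*E}$ together with the geometric meaning of the three sides of the triangle $opq$. Since $\tilde E_0 = 0$ and $\tilde E_n = \iota_*E$, the vertices $p_0 = o$ and $p_n = q$ of the polygon coincide with two vertices of the triangle, and the side $\overline{oq}$ is common to both. As $P_{\iota_*E}$ is convex and all its vertices lie on one side of the line $\overline{oq}$, to prove containment it is enough to show that the convex chain $p_0p_1\cdots p_n$ never crosses the two remaining sides $\overline{op}$ and $\overline{pq}$. Because the edges $p_{i-1}p_i$ are exactly the vectors $\overline{Z}(E_i)$ attached to the Harder--Narasimhan factors $E_i=\tilde E_i/\tilde E_{i-1}$, and these factors have strictly decreasing slopes, the directions of the edges rotate monotonically along the chain. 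I would therefore reduce the statement to two extremal bounds: that the first edge $\overline{op_1}$ does not cross $\overline{op}$, and that the last edge $\overline{p_{n-1}p_n}$ does not cross $\overline{pq}$.

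To bound the first edge I would use that $E_1$ is $\sigma_{0,\alpha}$-semistable for small $\alpha>0$, so by Remark \ref{rem:stob}(1) its projection $pr(E_1)$ lies on or below the curve $\Gamma$. Writing the slope of the ray $\overline{op_1}$ in the $\overline Z$-plane as $\tfrac{H\ch_1(E_1)/H^2}{\ch_2(E_1)}$ and using that $pr(E_1)$ sits below $\Gamma$, this direction is pushed towards the ray through the point of $\Gamma$ lying above $pr(E_1)$. It then remains to compare the $\mu_H$-slope of $E_1$ with $\beta_2$: since $\mathcal W$ is the genuine wall at which $\iota_*E$ loses stability and its right-hand intersection with $\Gamma$ is $(\beta_2,\Gamma(\beta_2))$, the maximal-slope factor $E_1$ cannot sit to the right of $\beta_2$, and the convexity of $\Gamma$ then yields that $\overline{op_1}$ lies below the line $\overline{op}$ of slope $\beta_2/\Gamma(\beta_2)$. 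The symmetric argument, applied to the final quotient $\iota_*E/\tilde E_{n-1}$ and the left-hand intersection $(\beta_1,\Gamma(\beta_1))$, shows that $\overline{p_{n-1}p_n}$ lies below the line $\overline{pq}$ of slope $\beta_1/\Gamma(\beta_1)$.

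With the two extremal edges controlled, I would close the argument using convexity once more. The upper boundary $o\to p\to q$ of the triangle is itself a convex chain from $o$ to $q$, and the polygon chain is a second convex chain with the same endpoints whose first edge is no steeper than $\overline{op}$ and whose last edge is no flatter than $\overline{pq}$. Hence the monotone rotation of the polygon's edge directions keeps every intermediate vertex $p_i$ simultaneously below the lines $\overline{op}$ and $\overline{pq}$ and, by convexity, above $\overline{oq}$; thus every $p_i$ lies in $\triangle opq$ and, the triangle being convex, $P_{\iota_*E}\subseteq \triangle opq$.

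The step I expect to be the main obstacle is the comparison of the extreme Harder--Narasimhan factors with the single wall $\mathcal W$. The filtration computed at the limit $\sigma_{0,0^+}$ may be strictly finer than the destabilizing sequence $0\to F_2\to\iota_*E\to F_1\to 0$ of Lemma \ref{lem:fw}, so one must rule out any factor whose $\mu_H$-slope falls outside the interval $[\beta_1,\beta_2]$; this is precisely where the explicit estimates on $\beta_1$ and $\beta_2$ from Lemma \ref{lem:fw} (and Lemma \ref{lem:bound for beta1}) are needed. A secondary difficulty is bookkeeping of signs: since $q=\big(d-\tfrac{rH^2}{2},\,r\big)$ lies up and to the left of the origin and both $\Gamma(\beta_1)$ and $\Gamma(\beta_2)$ may be negative, the phrase ``below a line'' must be given a consistent orientation throughout, which I would fix once at the start by comparing against the common side $\overline{oq}$.
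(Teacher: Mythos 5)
Your overall skeleton is the same as the paper's: use convexity of the Harder--Narasimhan polygon to reduce the containment to two slope bounds, one for the first edge $\overline{op_1}$ against $\overline{op}$ and one for the last edge against $\overline{pq}$, and bring in the fact that $pr(\tilde E_1)\notin \Gamma_+$ (Remark \ref{rem:stob}, via $\sigma_{0,0^+}$-semistability of the factor). However, the step you yourself flag as ``the main obstacle'' is precisely where your argument has a genuine gap, and your proposed resolution is not correct. You assert that ``the maximal-slope factor $E_1$ cannot sit to the right of $\beta_2$,'' i.e.\ $\mu_H(\tilde E_1)\le \beta_2$, and then deduce the slope bound from this together with $pr(\tilde E_1)\notin\Gamma_+$ and convexity of $\Gamma$. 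No argument is offered for that assertion, and it is false in general: the paper's own Figure \ref{fig:phase} depicts exactly the situation where $pr(\tilde E_1)$ lies to the \emph{right} of $(\beta_2,\Gamma(\beta_2))$. Note also that ``$pr(\tilde E_1)\notin\Gamma_+$'' by itself cannot suffice: a point just below the curve $\Gamma$ whose first coordinate slightly exceeds $\beta_2$ has ray-slope exceeding $\Gamma(\beta_2)/\beta_2$, so without some constraint localizing $pr(\tilde E_1)$ the desired inequality $H^2\ch_2(\tilde E_1)/H\ch_1(\tilde E_1)\le \Gamma(\beta_2)/\beta_2$ simply does not follow.

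The missing ingredient is the paper's auxiliary line $L$. Since $\tilde E_1\subset \iota_*E$ in the heart $\Coh^0X$, which is the same heart along the entire ray $\beta=0$, and since the phase inequality between $\tilde E_1$ and $\iota_*E$ flips between the large-volume limit (where $\iota_*E$ is semistable) and $\sigma_{0,0^+}$ (where $\tilde E_1$ destabilizes), there exist stability conditions at which the two objects have equal phase. By Proposition \ref{prop:walls} these form a line segment $L$ whose extension passes through $pr(\tilde E_1)$ (here $\rk(\tilde E_1)\neq 0$ is needed); moreover $L$ is a wall for the rank-zero object $\iota_*E$, hence has the same slope $H^2\ch_2(\iota_*E)/H\ch_1(\iota_*E)$ as $\mathcal W$, and it lies below $\mathcal W$ because $\iota_*E$ is stable above $\mathcal W$. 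It is the conjunction of ``$pr(\tilde E_1)$ lies on the extension of a line parallel to and below $\mathcal W$'' with ``$pr(\tilde E_1)\notin\Gamma_+$'' that forces $pr(\tilde E_1)$ onto the dashed part of $L$, and every point there lies below the ray through the origin and $(\beta_2,\Gamma(\beta_2))$; this yields the slope bound even when $\mu_H(\tilde E_1)>\beta_2$. The same construction handles the last quotient $\iota_*E/\tilde E_{n-1}$ on the $\beta_1$ side. Without this line constraint (or an equivalent substitute), your bounds on the two extremal edges do not follow, so the proof as proposed is incomplete.
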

\begin{proof}
If $\iota_*E$ is $\sigma_{0,\alpha}$-semistable where $\alpha \rightarrow 0^{+}$, then the polygon $P_{\iota_*E}$ is just the line segment $oq$ and the claim follows. Thus, we may assume $\iota_*E$ is not $\sigma_{0,\alpha}$-semistable where $\alpha \rightarrow 0^{+}$. Since the polygon $P_{\iota_*E}$ is convex, it suffices to show that 
\begin{equation*}
\frac{H^2\ch_2(\tilde{E}_1)}{H\ch_1(\tilde{E}_1)} \leq \dfrac{\Gamma(\beta_2)}{\beta_2} \;\;\;\; \text{and} \;\;\;\;  \dfrac{\Gamma(\beta_1)}{\beta_1}  \leq \frac{H^2\ch_2(\iota_*E/\tilde{E}_{n-1})}{H\ch_1(\iota_*E/\tilde{E}_{n-1})}. 
\end{equation*} 
The phase of the subobject $\tilde{E}_1$ in the Harder-Narasimhan filtration is bigger than phase of $\iota_*E$ at the stability condition $\sigma_{0,\alpha}$ where $\alpha \rightarrow 0^+$. Therefore there are stability condition between large volume limit ($\sigma_{\beta,\alpha}$ where $\alpha \rightarrow \infty$) and the stability conditions $\sigma_{0,\alpha}$ where $\alpha \rightarrow 0^{+}$ such that $\tilde{E}_1$ and $\iota_*E$ have the same phase. Proposition \ref{prop:walls} implies that these stability conditions are on a line segment $L$ whose extension passes through the point $pr(\tilde{E}_1)$. Note that $\rk(\tilde{E}_1) \neq 0$. By assumption, the line $L$ is lower than the wall $\mathcal{W}$ for $\iota_*E$, see Figure \ref{fig:phase}.

	\begin{figure}[h]
\begin{center}
\scalebox{0.8}{
\tikzset{%
    add/.style args={#1 and #2}{
        to path={%
 ($(\tikztostart)!-#1!(\tikztotarget)$)--($(\tikztotarget)!-#2!(\tikztostart)$)%
  \tikztonodes},add/.default={.2 and .2}}
}
\begin{tikzpicture}[scale = 12]
\newcommand\XA{0.1}
\newcommand\obe{-0.3}
\newcommand\xa{-0.9}
\newcommand\xb{0.6}

\coordinate (A) at (\xa/4,\xa*\xa/2-0.1+0.1+0.2*\xa+0.1*\xa*\xa);
\node [above right] at (A) {$(\beta_1,\Gamma(\beta_1))$};
\node at (A) {$\bullet$};

\coordinate (B) at (\xb/4,\xb*\xb/2-0.1+0.1-0.2*\xb+0.1*\xb*\xb);
\node [above right] at (B) {$(\beta_2,\Gamma(\beta_2))$};
\node at (B) {$\bullet$};

\newcommand\xc{0.5}
\coordinate (C) at (\xc/4,\xc*\xc/2-0.1+0.1*\xc*\xc);
\node at (C) {$\bullet$};

\newcommand\xd{-0.8}
\coordinate (D) at (\xd/4,\xd*\xd/2-0.1+0.1+0.2*\xd+0.1*\xd*\xd);
\node at (D) {$\bullet$};

\draw [add =0 and 0] (B) to (A);

\draw [add =0 and -0.39] (D) to (C);
\draw [add =-0.61 and 0] (D) to (C);

\draw [add =-1 and 0.3,dashed] (D) to (C);

\draw [add =-1 and 0.1,dashed] (D) to (C) node{$\bullet$} node [above right]{$pr(\tilde{E}_1)$} coordinate (E);

\draw [add = 0 and 1.2, dashed] (0,0) to (E) node[right]{slope $=\frac{H^2\ch_2(\tilde{E}_1)}{H\ch_1(\tilde{E}_1)}$};

\draw [add = 0 and 1, dashed] (0,0) to (B) node[right]{slope $=\frac{\Gamma(\beta_2)}{\beta_2}$};

\draw (-0.1,0.19) node{$L$};
\draw (-0.1,0.26) node{$\mathcal{W}$};

\draw[->] [opacity=\XA*5] (-0.3,0) -- (0,0) node[above right] {}-- (0.4,0) node[below right, opacity =1] {$\frac{H\ch_1}{H^2\rk}$};
\draw (-0.25,0) to (-0.25,0.01);
\node [below] at (-0.25,0) {$-1$};
\draw (0.25,0) to (0.25,0.01);
\node [below] at (0.25,0) {$1$};
\draw[->][opacity=\XA*5] (0,-.2)-- (0,0) node [above right] {O} --  (0,0.45) node[right, opacity=1] {$\frac{\ch_2}{\rk}$};
\draw (0,0.4) to (-0.004,0.4);
\node [right] at (0,0.4) {$0.4H^2$};
\draw [opacity=\XA](-0.25,0.5) parabola bend (0,0) (0.25,0.5);
\draw [opacity=\XA](-0.25,0.4) parabola bend (0,-0.1) (0.25,0.4);

\draw[domain=-0.125:0.125,smooth,variable=\t,blue] plot ({\t},{8*\t*\t-0.1+1.6*\t*\t});
\draw[domain=-0.25:-0.125,smooth,variable=\t,blue] plot ({\t},{8*\t*\t+0.8*\t+1.6*\t*\t});
\draw[domain=0.125:0.25,smooth,variable=\t,blue] plot ({\t},{8*\t*\t-0.8*\t+1.6*\t*\t});
\draw[blue] (0,-0.1) to (0,0);
\draw[blue] (-0.25,0.49) to (-0.25,0.5);
\draw[blue] (0.25,0.49) to (0.25,0.5) node[below right] {curve $\Gamma$};

\end{tikzpicture} 
}
\end{center}
\caption{Comparing slopes} \label{fig:phase}
\end{figure}
Since $\tilde{E}_1$ is $\sigma_{0,\alpha}$-semistable where $\alpha \rightarrow 0^{+}$, the point $pr(\tilde{E}_1)$ is not in $\Gamma_+$. Therefore, $pr(\tilde{E}_1)$ is on the dashed part of the line $L$ and the first claim follows. By a similar argument one can show the second claim for $\iota_*E/\tilde{E}_{n-1}$.  
\end{proof}
We are now ready to prove the bound for the dimension of global sections of the vector bundle $E$.
\begin{proof}[Proof for the first part of Theorem \ref{main1}] 
	Consider the triangle $op'q$ where the slope of $op'$ is $\frac{\frac{d}{rH^2}}{\Gamma\left(\frac{d}{rH^2}\right)}$ and the slope of $p'q$ is $\frac{\left(\frac{d}{rH^2}-1\right)}{\Gamma\left(\frac{d}{rH^2}-1\right)}$. Lemma \ref{lem:fw} implies that the triangle $opq$ is inside the triangle $op'q$, so by Lemma \ref{lem:polygon} the polygon $P_{\iota_*E}$ is also inside the triangle $op'q$. By a direct computation, one can show that the point
	\begin{equation*}
	p'= \bigg(\dfrac{d^2g}{(H^2)^2r} -r, \dfrac{d}{H^2}  \bigg).
	\end{equation*}
	Now Proposition \ref{prop:bound for global sections}, part $(b)$ gives 
	\begin{align*}
	&h^0(C,E) = h^0(X,\iota_*E) \leq \dfrac{\chi(\iota_*E)}{2} + \dfrac{1}{2} \sum_{i=1}^{n} \lVert p_ip_{i-1} \rVert \\
	\leq & \dfrac{\chi(\iota_*E)}{2} + \dfrac{1}{2}\big(   \lVert op' \rVert  + \lVert p'q \rVert \big) \\
	= & \frac{1}2(r(1-g)+d)+ \frac{1}2 \sqrt{\left(\frac{d^2g}{(H^2)^2r}-r\right)^2+4g\left( \frac{d}{H^2}\right)^2}\\
	& + \frac{1}2 \sqrt{\left(r(g-1)-d+\frac{d^2g}{(H^2)^2r}-r\right)^2+4g\left( r - \frac{d}{H^2}\right)^2} \\
	= & \frac{1}2(r(1-g)+d)+ \frac{1}2 \left(\frac{d^2g}{(H^2)^2r}+r\right)+\frac{1}2 \left(r(g-1)-d+\frac{d^2g}{(H^2)^2r}+r + \delta \right).
	\end{align*}
	One can easily show that $\delta <\dfrac{2r}{g}$ and the claim follows. 
\end{proof}
\begin{Rem}\label{rem:sharp}
	The bound for $h^0(C,E)$ in Theorem \ref{main1} is not far from the sharp bound. Let $k$ be an integer in $[1,r]$, denote $t=\gcd(r,k)$. When $d=2k(g-1)$ such that $g\geq \left(\frac{r}{t}\right)^2$, there exists a stable vector bundle $F$ on $X$ with Chern characters: $$(\rk(F),\ch_1(F),\ch_2(F))=\left(\frac{r}{t}, \frac{k}{t} H, \lf\frac{t}r+\frac{k^2}{rt}(g-1) \rf - \frac{r}{t} \right).$$ 
	The restriction $F^{\oplus t}|_C$ is semistable (by \cite[Theorem 1.1]{Fe16}) with rank $r$, degree $2k(g-1)$ and dimension of global sections 
	\begin{align*}
	    h^0(C,F^{\oplus t}|_C)= h^0(X,F^{\oplus t}) = t\lf \frac{t}r+\frac{k^2}{rt}(g-1) \rf + r.
	\end{align*}
	If the $\lfloor\cdot\rfloor$ function can be dropped for free, the formula can be simplified as \[r+\frac{g}{4r(g-1)^2} d^2 +\frac{t^2 -k^2}{r}.\] 
\end{Rem}

\begin{Cor}\label{cor:clf}
	Let $(X,H)$ be a polarized smooth K3 surface with $\Pic(X)=\Z H$. Let the smooth curve $C\in|H|$ be with genus $g$, $E$ be a semistable vector bundle with rank $r$ and degree $d$ $(0\leq d\leq r(g-1)).$ Then $\clf(E)> \frac{d}r-\frac{d^2g}{2r^2(g-1)^2}-\frac{2}g$ and \[\clf_r(C)> 2\sqrt{g-1}-2 -\frac{2\sqrt{g-1}}{g}.\]
\end{Cor}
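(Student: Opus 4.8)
The plan is to derive both inequalities directly from the upper bound \eqref{eq:mainthm} of Theorem \ref{main1} together with the definition $\clf(E)=\frac{d}{r}-\frac{2}{r}h^0(C,E)+2$. For the first inequality I would simply substitute: multiplying $h^0(C,E)<r+\frac{g}{4r(g-1)^2}d^2+\frac{r}{g}$ by $-\frac{2}{r}$ and adding $\frac{d}{r}+2$ gives $\clf(E)>\frac{d}{r}-\frac{gd^2}{2r^2(g-1)^2}-\frac{2}{g}$, which is exactly the asserted bound. Write $f(d)$ for this right-hand side; it is a concave (downward) quadratic in $d$, with leading coefficient $-\frac{g}{2r^2(g-1)^2}$.

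For the second inequality I would minimise $f(d)$ over the degrees that can occur for a bundle counted by $\clf_r(C)$. The defining constraint $h^0(C,E)\ge 2r$, combined with \eqref{eq:mainthm}, forces $2r<r+\frac{g}{4r(g-1)^2}d^2+\frac{r}{g}$; rearranging gives $\frac{gd^2}{4r(g-1)^2}>\frac{r(g-1)}{g}$, i.e. $d>d_0:=\frac{2r}{g}(g-1)^{3/2}$. Hence every relevant bundle has degree in $[d_0,r(g-1)]$ (a nonempty interval, since $2\sqrt{g-1}\le g$ is equivalent to $(g-2)^2\ge 0$), and by the first part $\clf(E)>f(d)$ there. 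A direct computation gives $f(d_0)=2\sqrt{g-1}-2-\frac{2\sqrt{g-1}}{g}$, which is precisely the claimed right-hand side.

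Because $f$ is concave, its minimum on the closed interval $[d_0,r(g-1)]$ is attained at an endpoint, so it remains to confirm that the minimising endpoint is $d_0$, i.e. that $f(d_0)\le f(r(g-1))$. Since $f(r(g-1))=\frac{g}{2}-1-\frac{2}{g}$, this comparison simplifies to the elementary inequality $2\sqrt{g-1}\le g-2$, equivalently $g\ge 4+2\sqrt{2}$. Thus for $g\ge 7$ the minimum of $f$ over the admissible interval equals $f(d_0)$, and the bound $\clf_r(C)>f(d_0)$ follows at once.

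I expect the main obstacle to be the low-genus range. For $g\le 3$ the value $f(d_0)$ is already $\le 0$, so the statement holds trivially because higher-rank Clifford indices are nonnegative by the higher rank Clifford Theorem. For $4\le g\le 6$ the endpoint comparison reverses, and the purely continuous estimate only yields $\clf_r(C)>f(r(g-1))$, which is \emph{weaker} than the claim; here one cannot argue by concavity alone and must instead exploit that $h^0(C,E)$ is an \emph{integer} lying strictly below $U(d):=r+\frac{g}{4r(g-1)^2}d^2+\frac{r}{g}$. Near $d=r(g-1)$ the fractional slack in \eqref{eq:mainthm} pins $h^0(C,E)$ into a short integer range and sharpens $\clf(E)$ back above $f(d_0)$; the Serre-duality symmetry $\clf(E)=g+1-\frac{h^0(C,E)+h^1(C,E)}{r}=\clf(K_C\otimes E^\vee)$ both justifies restricting attention to $d\le r(g-1)$ and organises this endpoint analysis. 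Carrying out this integral refinement for the three small genera is the one place where the argument is not a one-line calculation.
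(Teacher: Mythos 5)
Your reduction of both inequalities to Theorem \ref{main1} is exactly the paper's strategy: the first inequality is the same substitution, the constraint $d>d_0=\frac{2r(g-1)^{3/2}}{g}$ is extracted from $h^0(C,E)\geq 2r$ in the same way, and the second bound is the value $f(d_0)$. Where you differ from the paper is in the endpoint analysis, and there you are right and the paper is wrong: the paper simply asserts that for $g\geq 3$ the concave function $f$ attains its minimum on $\left[d_0,\,r(g-1)\right]$ at the left endpoint, whereas, as your comparison shows, this holds only when $g\geq 4+2\sqrt{2}$, i.e. $g\geq 7$ (for $g=3,4$ the vertex $\frac{r(g-1)^2}{g}$ of the parabola even lies to the left of $d_0$, so $f$ is decreasing on the whole interval, and for $g=5,6$ the right endpoint is still the smaller one). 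So for $g\geq 7$ your proof is complete and coincides with the paper's intended argument, and for $g\leq 3$ your appeal to nonnegativity of the Clifford index (the claimed bound being negative there) is a correct supplement that the paper does not spell out.

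The genuine gap is the range $4\leq g\leq 6$, and your sketched repair does not close it: integrality of $h^0(C,E)$ is not strong enough near $d=r(g-1)$. Concretely, take $g=4$, $r=4m$ and $d=3r$; then \eqref{eq:mainthm} gives $h^0(C,E)<\frac{9r}{4}=9m$, so the best integral conclusion is $h^0(C,E)\leq 9m-1$, hence only
\begin{equation*}
\clf(E)\;\geq\; 5-\frac{2(9m-1)}{4m}\;=\;\frac{1}{2}+\frac{1}{2m},
\end{equation*}
which for $m\geq 6$ falls below the claimed bound $\frac{3\sqrt{3}}{2}-2\approx 0.598$, while such a hypothetical bundle would be counted in $\clf_r(C)$ since $9m-1\geq 8m=2r$. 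Serre duality does not help either: at $d=r(g-1)$ the dual bundle $K_C\otimes E^\vee$ has the same rank and degree, so applying \eqref{eq:mainthm} to it yields nothing new. Ruling out such bundles would require input beyond Theorem \ref{main1} (non-existence results for semistable bundles with these invariants), not integer rounding. In summary, your argument proves the corollary for $g\geq 7$ and $g\leq 3$; the cases $g=4,5,6$ remain unproven in your proposal — and, because of the erroneous minimization claim, in the paper's own proof as well.
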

\begin{proof}
The bound for $\clf(E)$ is by substituting the bounds of $h^0(C,E)$ into the formula of Clifford index. By the first part of Theorem \ref{main1}, if $h^0(C,E)\geq 2r$, then $d>\frac{2r(g-1)^{\frac{3}2}}{g}$. When $d\in \left[\frac{2r(g-1)^{\frac{3}2}}{g},r(g-1)\right]$ and $g\geq 3$, the function $f(d)=\frac{d}r-\frac{d^2g}{2r^2(g-1)^2}-\frac{2}g$ reaches its minimum at the left boundary. Therefore, $$\clf_r(C)>\frac{2r(g-1)^{\frac{3}2}}{gr}-\frac{4r^2(g-1)^3}{2r^2g(g-1)^2}-\frac{2}g = 2\sqrt{g-1}-2 -\frac{2\sqrt{g-1}}{g}$$ for any $r$.
\end{proof}

\section{Higher rank Clifford indices}
In this section, we compute higher rank Clifford indices of curves over K3 surfaces and prove the second part of Theorem \ref{main1}.
\subsection{Picard number one case}
We assume $X$ is a K3 surface with Pic$(X) = \mathbb{Z}H$ and $C \in |H|$ is a smooth curve of genus $g$. Denote by $\iota:C\hookrightarrow X$ the embedding of the curve $C$ into $X$. We first briefly recall the main result in \cite{Fe16}, which constructs semistable vector bundles on $C$ by restricting vector bundles on $X$ with low discriminant. By \cite[Theorem 2.15]{BM:walls}, there exists a slope stable sheaf $\tilde{E}_r$ on $X$ with Chern character $(r,H,\fgr-r)$. Define $E_r:=\tilde{E}_r|_C$.

\begin{Thm}[{\cite[Theorem 1.1]{Fe16}}]
\label{thm:constofer}
Assume $g\geq \max\{r^2,6\}$ and $r\geq 2$, then the sheaf $E_r$ is a semistable vector bundle on $C$ with $h^0(C,E_r)\geq 2r$ and 
\begin{align}
\clf(E_r) \leq \frac{2}r(g-1)-\frac{2}r\fgr. 
\label{eq:clflower}
\end{align}
\end{Thm}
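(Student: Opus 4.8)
The plan is to split the statement into an elementary numerical reduction, a cohomology computation, and a Bridgeland-stability argument for the restriction. Write $k:=\fgr$, so that $\tilde{E}_r$ has Chern character $(r,H,k-r)$ and Mukai vector $(r,H,k)$. Since $\ch_1(\tilde{E}_r)=H$ and $C\in|H|$, the restriction $E_r$ has rank $r$ and degree $H^2=2g-2$, hence $d=2g-2\leq r(g-1)$ (as $r\geq 2$), $\mu_H(E_r)=\tfrac{2g-2}{r}$, and $\chi(E_r)=(2-r)(g-1)$ by Riemann--Roch. Substituting $d=2g-2$ into $\clf(E_r)=\tfrac{d}{r}-\tfrac{2}{r}h^0(C,E_r)+2$, one checks that the bound \eqref{eq:clflower} is \emph{equivalent} to $h^0(C,E_r)\geq r+k$; moreover $g\geq r^2$ forces $k\geq r$, so the very same inequality also gives $h^0(C,E_r)\geq 2r$. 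Thus everything reduces to proving that (i) $E_r$ is a slope-semistable vector bundle, and (ii) $h^0(C,E_r)\geq r+k$.

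For (ii) I would take $\tilde{E}_r$ to be locally free — its Mukai vector is primitive, so the moduli space is smooth and its general member is locally free, while slope-stability is supplied by \cite[Theorem 2.15]{BM:walls} — so that $E_r=\tilde{E}_r|_C$ is a genuine vector bundle and tensoring $0\to\cO_X(-H)\to\cO_X\to\iota_*\cO_C\to 0$ by $\tilde{E}_r$ yields the exact sequence of sheaves
\begin{equation*}
0\to \tilde{E}_r(-H)\to \tilde{E}_r\to \iota_*E_r\to 0.
\end{equation*}
Here $\tilde{E}_r(-H)$ is slope-stable of negative slope $\tfrac1r-1$, so $H^0(X,\tilde{E}_r(-H))=0$ and therefore $H^0(X,\tilde{E}_r)\hookrightarrow H^0(X,\iota_*E_r)=H^0(C,E_r)$. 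On the other hand $\tilde{E}_r$ is slope-stable with $\mu_H(\tilde{E}_r)=\tfrac1r>0=\mu_H(\cO_X)$, whence $\Hom(\tilde{E}_r,\cO_X)=0$ and, by Serre duality, $h^2(X,\tilde{E}_r)=0$; Riemann--Roch then gives $h^0(X,\tilde{E}_r)\geq \chi(\tilde{E}_r)=\ch_2(\tilde{E}_r)+2r=r+k$. Combining the two displays proves (ii).

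The heart of the argument is (i). The key reduction is that it is enough to produce a \emph{single} stability condition $\sab$ (with $\alpha>\Gamma(\beta)$) for which $\iota_*E_r$ is semistable: if $E_r$ had a quotient $E_r\twoheadrightarrow Q$ on $C$ with $\tfrac{\deg Q}{\rk Q}<\tfrac{2g-2}{r}$, then $\iota_*E_r\twoheadrightarrow\iota_*Q$ would be a quotient in $\Coh^\beta X$ with $\nab(\iota_*Q)=\tfrac{\deg Q}{\rk Q}+1-g<\tfrac{2g-2}{r}+1-g=\nab(\iota_*E_r)$ (these slopes are independent of $(\beta,\alpha)$ since both objects have rank zero), contradicting semistability; this is the large-volume content of \cite[Theorem 3.11]{Maciocia:computing-the-walls}. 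To build such a point I would exploit that, for $\tfrac1r-1\leq\beta<\tfrac1r$, one has $\tilde{E}_r\in\cT^\beta$ and $\tilde{E}_r(-H)\in\cF^\beta$, so the restriction sequence becomes a short exact sequence
\begin{equation*}
0\to \tilde{E}_r\to \iota_*E_r\to \tilde{E}_r(-H)[1]\to 0
\end{equation*}
in the heart $\Coh^\beta X$. If one can find $(\beta_0,\alpha_0)\in\Gamma_+$ on the line through $pr(\tilde{E}_r)=(\tfrac1r,\tfrac{k-r}{r})$ and $pr(\tilde{E}_r(-H))$ — i.e. where the two factors share a phase — at which both $\tilde{E}_r$ and $\tilde{E}_r(-H)[1]$ are $\sab$-stable, then $\iota_*E_r$ is an extension of two $\sigma_{\beta_0,\alpha_0}$-semistable objects of equal slope, hence itself $\sigma_{\beta_0,\alpha_0}$-semistable, which closes (i).

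The main obstacle is precisely the existence of this common stable point, and this is where the hypotheses $g\geq r^2$ and $g\geq 6$ are consumed. The Mukai self-pairing $\langle v,v\rangle=H^2-2rk=2g-2-2rk$ lies in $[-2,2r-2)$, bounded independently of $g$, so $\tilde{E}_r$ (and its twist) has very small discriminant and, by Proposition \ref{prop:walls} together with the Hodge-index estimate used in the Remark after Theorem \ref{thm:stabconstr}, only a tightly constrained set of walls; meanwhile the largeness of $g$ relative to $r$ pushes the colinearity point far enough up into $\Gamma_+$ that it lies strictly above all of them. Making this quantitative — bounding the Chern character of any hypothetical $\sab$-destabilizer of $\tilde{E}_r$ or $\tilde{E}_r(-H)[1]$ on the relevant segment and deriving a numerical contradiction from $g\geq r^2$ (with $g\geq6$ covering the small-$r$ cases) — is the genuinely technical step, and constitutes the core of \cite[Theorem 1.1]{Fe16}.
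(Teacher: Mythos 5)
Your peripheral reductions are correct and coincide with the paper's own proof. The observation that, since $\deg(E_r)=2g-2$, the bound \eqref{eq:clflower} is equivalent to $h^0(C,E_r)\geq r+\fgr$ (which also yields $h^0(C,E_r)\geq 2r$ because $g\geq r^2$ forces $\fgr\geq r$) is exactly how the paper concludes; so is the estimate $h^0(C,E_r)\geq \hom(\cO_X,\tilde E_r)\geq \chi(\tilde E_r)=r+\fgr$, obtained by applying $\Hom(\cO_X,-)$ to $0\to\tilde E_r(-H)\to\tilde E_r\to\iota_*E_r\to 0$ and using $\Hom(\cO_X,\tilde E_r(-H))=0$. (You are in fact more careful than the paper here: the step $\hom(\cO_X,\tilde E_r)\geq\chi(\tilde E_r)$ does require killing $h^2$ by Serre duality and stability, which the paper leaves implicit.) Your reduction of slope semistability of $E_r$ to $\sigma_{\beta,\alpha}$-semistability of $\iota_*E_r$ at a single point of $\Gamma_+$, and the short exact sequence $0\to\tilde E_r\to\iota_*E_r\to\tilde E_r(-H)[1]\to 0$ in $\Coh^\beta X$ for $\frac{1}{r}-1\leq\beta<\frac{1}{r}$, are also sound.

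The gap is that the heart of the statement, semistability of $E_r$, is never established: you sketch a wall-crossing strategy and then declare its quantitative core to be ``the core of \cite[Theorem 1.1]{Fe16}''. That is a deferral to the \emph{proof} of the cited result, not an application of its statement, so nothing in your text licenses the conclusion. The paper's proof is of a different and much shorter nature: it uses \cite[Theorem 1.1]{Fe16} as a black box and consists precisely of verifying its hypotheses for $\tilde E_r$, namely (1) local freeness, proved by noting that otherwise the double dual would be slope stable with Chern character $(r,H,\fgr-s)$ for some $s\leq r-1$ and would satisfy $-\chi(F,F)<-2$, contradicting \cite[Theorem 2.15]{BM:walls}; and (2) the numerical inequality $H^2+\frac{H^2(r-2)}{(r-1)^2}-2r^2-\left(H^2-2r\fgr\right)>0$, which is exactly where the hypothesis $g\geq\max\{r^2,6\}$ is consumed (for $r=2$ it amounts to $\lfloor g/2\rfloor>2$, i.e.\ $g\geq 6$). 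Your proposal checks neither. Moreover, your substitute argument for local freeness (pass to a general member of the moduli space) fails precisely when $r$ divides $g$: then the Mukai vector $(r,H,\fgr)$ is spherical, $\langle v,v\rangle=-2$, the moduli space is a single reduced point, and there is no ``general member'' to move to --- this case needs the double-dual argument. To repair the proposal in the paper's spirit, replace the sketched wall analysis by the verification of the two hypotheses above and then quote \cite[Theorem 1.1]{Fe16}.
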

\begin{proof}
We first check $\tilde{E}_r$ satisfies conditions in \cite[Theorem 1.1]{Fe16}. If $\tilde{E}_r$ is not locally free, then the double dual $F=\tilde{E}_r^{\vee\vee}$ is slope stable with Chern characters $(r,H,\fgr-s)$ for some integer $s\leq r-1$. Yet $-\chi(F,F)=H^2-2r\left(\fgr-s\right)-2r^2<-2$. This contradicts \cite[Theorem 2.15]{BM:walls}. By the assumption on $r$ and $g$, we have 
\[H^2+\frac{H^2(r-2)}{(r-1)^2}-2r^2-\left(H^2-2r\fgr\right)>0.\]
Therefore \cite[Theorem 1.1]{Fe16} implies that $E_r$ is slope semistable on $C$.

Apply $\Hom(\cO_X,-)$ to the short exact sequence: $0\rightarrow \tilde{E}_r(-H)\rightarrow \tilde{E}_r\rightarrow \iota_*E_r\rightarrow 0.$ 
Since $\tilde{E}_r(-H)$ is slope stable and $\mu_H(\tilde{E}_r(-H))<\mu_H(\cO_X)$, we  have Hom$(\cO_X,\tilde{E}_r(-H))=0$. Therefore, 
\[h^0(C,E)=\hom(\cO_X,\iota_*E_r)\geq \hom(\cO_X,\tilde E_r) = \chi(\tilde E_r) = \fgr+r \geq 2r.\]
As $\deg(E_r)=c_1(\tilde E_r)H = 2(g-1)$, by a direct computation, $\clf(E_r) \leq \frac{2}r(g-1)-\frac{2}r\fgr$.
\end{proof}
We now prove the Clifford index of $E_r$ is indeed the minimum of Clifford index of any semistable vector bundle $E$ with rank $r$, degree $d$ and $h^0(E)\geq 2r$. This will involve several different cases.  
\begin{proof}[Proof of the second part of Theorem \ref{main1} for $r \geq 4$]
Let $E$ be a semistable rank $r$-vector bundle of degree $d \leq r(g-1)$ on the curve $C$. By Theorem \ref{thm:constofer}, it suffices to show that either $h^0(E) <2r$ or $\clf(E) \geq  \frac{2}{r}(g-1)-\frac{2}r\fgr$.\\ 

\textbf{Step 1.} We show $\clf(E) >  \frac{2}{r}(g-1)-\frac{2}r\fgr$ if $2g+2<d\leq r(g-1)$.
	
	Denote $t \coloneqq d-2(g-1)$. The first part of Theorem \ref{main1} implies that
\begin{align*}
& \clf(E)-\frac{2}{r}(g-1)+\frac{2}r\fgr \\
> & \frac{t}r-\frac{2}r\left(r+\frac{\left(2+\frac{t}{g-1}\right)^2}{4r}g+\rg\right)+2+\frac{2}r\fgr \eqqcolon Q(t).
\end{align*}
Then $Q(t)$ is a quadratic function with respect to $t$ with negative leading coefficient. Thus it suffices to show that $Q(t=5) >0$ and $Q(t= (r-2)(g-1)) >0$ which can be easily checked by direct computations.\\  

\textbf{Step 2.} We show $\clf(E) \geq  \frac{2}{r}(g-1)-\frac{2}r\fgr$ if $-2 \big(\fgr -r \big) \leq d-2(g-1) \leq 4$.

Applying Proposition \ref{prop:bound for global sections} for the push-forward $\iota_*E$ implies that there exists $\epsilon >0$ such that its Harder-Narasimhan filtration with respect to $\sigma_{0,\alpha}$ for positive $\alpha<\epsilon$ is a fixed sequence 
\begin{equation*}
0 = \tilde{E}_0 \subset \tilde{E}_1 \subset ... \subset \tilde{E}_{n-1} \subset \tilde{E}_n = \iota_*E,
\end{equation*}   
and
\begin{equation}\label{lower bound for cliff}
\clf(E) \geq g+1 - \dfrac{l(E)}{r},
\end{equation} 
where $l(E) \coloneqq \sum_{i=1}^{n}\lfloor \lVert p_ip_{i-1} \rVert \rfloor $ and $p_i = \overline{Z}(\tilde{E}_i)$. Thus it is suffices to show that
\begin{equation}\label{in.final}
l(E) \leq g(r-2) +2 \fgr +r+2. 
\end{equation}
We first treat with the case that $\ch_1(\tilde{E}_1)\neq H$. In this case, as $\frac{H\ch_1(\tilde{E}_1)}{H^2}$ is a positive integer, $\frac{H\ch_1(\tilde{E}_1)}{H^2}\geq 2$. By Lemma \ref{lem:bound for beta1}, $\beta_1 \geq -1+1/r$. Applying the same argument as in Lemma \ref{lem:polygon} implies that the polygon $P_{\iota_*E}$ is contained in the triangle $op'q$ where the slope of $qp'$ is $\frac{-1+1/r}{\Gamma(-1+1/r)}$ and the vertical coordinate of the point $p'$ is equal to $2$, see Figure \ref{fig:mainp}.
\begin{figure}[h]
	\begin{center}
		\tikzset{%
			add/.style args={#1 and #2}{
				to path={%
					($(\tikztostart)!-#1!(\tikztotarget)$)--($(\tikztotarget)!-#2!(\tikztostart)$)%
					\tikztonodes},add/.default={.2 and .2}}
		}
		\scalebox{0.7}{
			\begin{tikzpicture}            
			\coordinate (O) at (0,0);
			\draw (O) node {$\bullet$} node[below] {\textbf{O}};
			
			\coordinate (C) at (0.6,1);
			\draw[dashed] (0,1) node[left] {$1$} to (C);
			\coordinate (D) at (1,1);\draw (D) node {$\bullet$};
			
			\coordinate(F) at (-1.9,2);
			\draw[dashed] (0,2) node[right] {$2$} to (F);
			\draw (F) node {$\bullet$} node[above] {$p'$};
			
			\coordinate (E) at (-12,5.5);
			\draw (E) node {$\bullet$} node[left] {$q$};
			\draw[dashed] (0,5.5) node[right] {$r$} to (E);
			\draw[dashed] (-12,0) node[below] {$d -r(g-1)$} to (E);
			
			\draw[->] (-14,0) to (4,0) node[right] {$\ch_2$};
			\draw[->] (0,0) to (0,6) node[above] {$\frac{H\ch_1}{H^2}$};
			
			\draw[dashed] (D) node[right] {$\tilde{p}$} to (E);
			\draw (O) to (E);
			\draw (O) to (C) node {$\bullet$} node[above] {$p_1$} to (F) to (E);

			\draw (-4.5,4) node {slope of $q\tilde{p} =  \frac{ \frac{1-r}r}{\Gamma\left(\frac{1-r}r\right)}$};
			\end{tikzpicture}
		}
	\end{center}
	\caption{The polygon $p_{\iota_*E}$ is inside the polygon $op_1p'q$.} \label{fig:mainp}
\end{figure}

Denote by $\tilde{p}$ the point along the line $p'q$ with the vertical coordinate equal to $1$. The coordinates of two points $p'$ and $\tilde{p}$ are
\begin{equation*}
p'= \bigg( d-2(g-1) -\dfrac{r-2}{r}(g+r) \, ,\, 2\bigg) \;\;\; \text{and} \;\;\; \tilde{p} = \bigg(d-2(g-1) + \dfrac{g}{r} -r \,, \, 1\bigg).
\end{equation*} 
Note that the length $\lVert q\tilde{p} \rVert$ does not depend on $d$,
\begin{align}
\lVert p'q \rVert & = \frac{r-2}{r-1} \lVert \tilde{p}q \rVert =\frac{r-2}{r-1}  \sqrt{\left((r-2)(g-1)+\gr-r\right)^2+ 4g(r-1)^2} \\
& < \frac{r-2}{r-1} \left((r-2)(g-1)+\gr+r + \frac{2r}{g}\right)\label{ED}
\end{align}
The horizontal coordinate of $p'$ is negative and is bigger than $-g+r+2$. Thus if $r \geq 4$, we have 
$$\lVert op' \rVert \leq \sqrt{16g+(g-r-2)^2}\leq g+r+\frac{1}r-\frac{6}5.$$
This implies $ l(E) \leq \lfloor \lVert op' \rVert + \lVert p'q \rVert \rfloor \leq g(r-2)+2\fgr +r+2 $, so inequality \eqref{in.final} holds.

Now assume $\ch_1(\tilde{E}_1) = H$. By Lemma \ref{lem:bound for beta1}, we have $\beta_2\leq \frac{1}r$. Therefore, $\ch_2(\tilde{E}_1)\leq \lfloor\frac{\Gamma(\beta_1)}{\beta_1}\rfloor=\fgr-r$. 
Write $s \coloneqq -\ch_2(\tilde{E}_1) - \fgr +r$, thus $s \geq -2\fgr + 2r$. Define the function 
\begin{equation}\label{funcion f}
f(s) \coloneqq \sqrt{4g +  \bigg(g \frac{r-2}{r} +  \fgr  -s -2\bigg)^2  }.
\end{equation}
One can easily show that 
\begin{equation}\label{in.s}
f(s) < g \frac{r-2}{r} +  \fgr  -s + \frac{2}{r-1} - \dfrac{r-2}{r-1}\frac{2r}{g} + \delta,
\end{equation}
where $\delta = 1$ if $s=0$ and $\delta = 2$ if $s \in \bigg[1,\dfrac{g}{2r}\bigg]$. We consider three different cases:\\

Case I: when $s \leq 0$, in particular $|\ch_2(\tilde{E}_1)| \leq \fgr -r$, the length $\lVert op_1 \rVert$ is maximum if $\ch_2(\tilde{E}_1) = -\fgr +r $, recall that $p_1 = \overline{Z}(\tilde{E}_1)$.
\begin{equation*}
\lfloor \lVert op_1 \rVert \rfloor \leq \left\lfloor \sqrt{4g+\left(\fgr-r\right)^2}\right\rfloor = \fgr+r.
\end{equation*} 
In this case, the point $p'$ is on the left hand side of $p_1$, so the length of $p_1p'$ is maximum when $d$ is minimum and the horizontal coordinate of $p_1$ is maximum, i.e. $d-2(g-1) + \fgr -r$, thus 
\begin{equation*}
\lVert p_1p' \rVert \leq \sqrt{4g+\bigg(g\dfrac{r-2}{r} + \fgr -2 \bigg)^2} = f(s=0). 
\end{equation*}
Inequalities \eqref{ED} and \eqref{in.s} for $s=0$ imply that 
\begin{equation*}
\lfloor \lVert p_1p' \rVert + \lVert p'q \rVert   \rfloor\leq     (r-2)(g-1)+\fgr + 2.
\end{equation*}
Therefore, $l(E) \leq \lfloor \lVert op_1 \rVert \rfloor + \lfloor \lVert p_1p' \rVert + \lVert p'q \rVert  \rfloor \leq (r-2)(g-1) + 2\fgr +r+2$, so again inequality \eqref{in.final} holds. \\

Case II. when $1 \leq s \leq \dfrac{g}{2r}$, then 
\begin{align*}
\lVert op_1 \rVert  =  \sqrt{4g+\left(\fgr+s -r\right)^2} < \fgr +s+r 
\end{align*}
The point $p'$ is still on the left hand side of the point $p_1$, so the length of $p_1p'$ is maximum when $d$ is minimum. Therefore, $\lVert p_1p' \rVert \leq f(s)$. Combining inequalities \eqref{ED} and \eqref{in.s} implies that
\begin{equation}\label{in.sum}
\lVert p_1p' \rVert + \lVert p'q \rVert < (r-2)(g-1)+\fgr -s +4. 
\end{equation} 
 Thus
 \begin{equation*}
l(E) \leq \lfloor \lVert op_1 \rVert \rfloor + \lfloor \lVert p_1p' \rVert + \lVert p'q \rVert  \rfloor \leq (r-2)(g-1)+2\fgr + r+2.
 \end{equation*}
as we required.\\ 

Case III. when $\dfrac{g}{2r} \leq s$, the summation of lengths $ \lVert op_1 \rVert + \lVert p_1p' \rVert + \lVert p'q \rVert $ is maximum when $d$ is minimum and $\ch_2(\tilde{E}_1)$ is maximum, i.e. $s=\dfrac{g}{2r}$. In this case, 
\begin{equation*}
\lVert op_1 \rVert = \sqrt{4g + \bigg(\fgr+ \frac{g}{2r} -r\bigg)^2} < \fgr + \dfrac{g}{2r} +r- 1
\end{equation*}
Together with inequality \eqref{in.sum} for $s=\dfrac{g}{2r}$, we have 
\begin{equation*}
\lVert op_1 \rVert + \lVert p_1p' \rVert + \lVert p'q \rVert < (r-2)(g-1)+2\fgr + r +3
\end{equation*}
Since $l(E) \leq \lfloor \lVert op_1 \rVert + \lVert p_1p' \rVert + \lVert p'q \rVert  \rfloor$ inequality \eqref{in.final} is satisfied.\\

\textbf{Step 3.} We show $h^0(C,E) <2r$ if $d < 2(g-1) -2 \left(\fgr-r\right)$.\\
	By using the same notations as in Step $2$, we first consider the case $\ch_1(\tilde{E}_1) \neq H$. By Proposition \ref{prop:bound for global sections}, it suffices to show that  
	\begin{equation*}
	k(d) \coloneqq d+r(1-g) + \lVert op' \rVert + \lVert p'q \rVert < 4r.
	\end{equation*}
	One can easily check that the function 
	\begin{equation*}
	k(d) = d+r(1-g) + \sqrt{16g + \bigg(d-2(g-1) - \dfrac{r-2}{r}(g+r)\bigg)^2} + \lVert p'q \rVert
	\end{equation*}
	is increasing with respect to $d$, so
	\begin{equation*}
	k(d) \leq k\big(2(g-1)-2\big(\fgr -r\big) -1 \big) < 4r.  
	\end{equation*}
	The last inequality comes from inequality \eqref{ED} and some direct computations. Thus we may assume $\ch_1(\tilde{E}_1) = H$. Define $t \coloneqq \tilde{p}_{(x)} - \ch_2(\tilde{E}_1) -\frac{g}{r} + \fgr $, where $\tilde{p}_{(x)} = d-2(g-1)+g/r-r$ is the horizontal coordinate of the point $\tilde{p}$. We consider two different cases: \\
	
	Case I. when $0 \leq t < \frac{g}{2r}$, 
	\begin{equation}\label{t}
  \lVert op_1 \rVert = \sqrt{4g + \big(-d+2(g-1) - \fgr +t +r\big)^2} < -d+2(g-1) -\fgr +t +3r
	\end{equation} 
	Moreover, $\lVert p_1p' \rVert = f(t)$ defined in \eqref{funcion f}. Thus combining inequality \eqref{in.s} for $t \in [0,\dfrac{g}{2r}]$ and inequality \eqref{ED} give
	\begin{equation*}
	l(E) \leq \lf \lVert op_1 \rVert \rf + \lf \lVert p_1p' \rVert + \lVert p'q \rVert \rf < 4r+r(g-1) -d.   
	\end{equation*}	
	Thus the claim follows by Proposition \ref{prop:bound for global sections}. \\
	
	Case II. when $\frac{g}{2r} \leq t$, the summation of the length $\lVert op_1\rVert + \lVert p_1p' \rVert + \lVert p'q \rVert$ is maximum when $t=g/r$ for which 
	\begin{equation*}
	\lVert op_1 \rVert < -d+2(g-1)-\fgr + \dfrac{g}{r} +3r-1.
	\end{equation*}
	Since $\lVert p_1p' \rVert = f(t)$, inequality \eqref{ED} for $t=\frac{g}{2r}$ together with inequality \eqref{in.s} imply that
	\begin{equation*}
	l(E) \leq \lf \lVert op_1 \rVert + \lVert p_1p' \rVert + \lVert p'q \rVert \rf \leq 4r+r(g-1) -d,   
	\end{equation*} 
	so the claim follows.
\end{proof}

\begin{proof}[Proof of the second part of Theorem \ref{main1} for $r = 3$]
Let $E$ be a rank $3$-semistable vector bundle on the curve $C$ of degree $d$. By Lemma \ref{lem:bound for beta1}, either $\beta_1\geq -\frac{1}2$ or $\ch(F_2)=(3,H,-)$.

Case I: If $\beta_1\geq -\frac{1}2$, since $\ch_2(\iota_* E) \leq 0$, the slope of the wall $\mathcal{W}$ for $\iota_*E$ is negative. Therefore, $\abs{\beta_2} < \abs{\beta_1} \leq \frac{1}2$. Lemma \ref{lem:polygon} implies that for each of the semistable factors $\tilde{E}_i/\tilde{E}_{i-1}$ in the Harder-Narasimhan filtration of $\iota_*E$ with respect to $\sigma_{0,\alpha}$ for positive $\alpha  <\epsilon$, we have 
\begin{equation*}
\abs{\dfrac{H^2\ch_2(\tilde{E}_i/\tilde{E}_{i-1})}{H\ch_1(\tilde{E}_i/\tilde{E}_{i-1})}} \leq \dfrac{\Gamma(1/2)}{1/2}.
\end{equation*}
Therefore $l(E) \leq \left \lfloor 3\sqrt{4g + (g/2-2)^2} \right\rfloor = \lfloor 3(g/2+2) \rfloor$ and Proposition \ref{prop:bound for global sections} implies that
\[
\clf(E) \geq   g+1- \dfrac{l(E)}{3} = g-\dfrac{1}{3}\lf \frac{3g}{2}\rf -1 \geq \frac{2}3(g-1) - \frac{2}3\lf\frac{g}3\rf.
\]

Case II: If $\ch(F_2)=(3,H,-)$, then $\beta_2 \leq \frac{\ch_1(F_2/T(F_2)) .H}{3H^2} \leq \frac{1}{3}$. When $d \geq 2(g-1) - 2\big(\lf \frac{g}{3}\rf-3\big)$, define $s \coloneqq -\ch_2(\tilde{E}_1) - \lf \dfrac{g}{3} \rf +3$, then using the same argument as in Step 2 for $r \geq 4$, if $\ch_1(\tilde{E_1}) \neq H $, then for $g\ \geq 9$ and $g \neq 11$, we have  
\begin{align}
l(E) = & \lfloor \lVert op' \rVert + \lVert p'q \rVert \rfloor = \left\lfloor 2\sqrt{\bigg(\frac{g}{3}-1\bigg)^2 +g} \;+\; \sqrt{(d-\frac{7}3g+1)^2 +16g}  \right\rfloor \label{eq:le}\\
\leq & \left\lfloor 2\sqrt{\bigg(\frac{g}{3}-1\bigg)^2 +g} \;+\; \sqrt{(g-5)^2 +16g}  \right\rfloor \leq g+2 \left\lfloor \frac{g}{3} \right\rfloor +5 , \nonumber
\end{align}
which shows inequality \eqref{in.final} holds for $r = 3$. The only remaining case that the last inequality does not hold is when $g=11$, but the formula (\ref{eq:le}) is less than or equal to $22$. Therefore, we may assume $\ch_1(\tilde{E}_1) = H$. Now the arguments in Step 2, Case I, II, and III in the proof of Theorem \ref{main1} for $r \geq 4$, are valid for $r=3$, thus $\clf_3(E) \geq \frac{2}3(g-1)-\frac{2}3\left\lfloor\frac{g}3\right\rfloor$. \\
When $d < 2(g-1) - 2\big(\lf \frac{g}{3}\rf -3 \big)$, define $t = \tilde{p}_{(x)} -\ch_2(\tilde{E}_1) - \dfrac{g}{3} + \lf \dfrac{g}{3}\rf$, then again the computations in Step 3, Case I, II are valid for $r=3$, hence $h^0(F) < 6$. Therefore, the second part of Theorem \ref{main1} for $r=3$ and $g \geq 9$ follows by Theorem \ref{thm:constofer}.  
\end{proof}


\begin{proof}[Proof of the second part of Theorem \ref{main1} for $r = 2$]
Let $E$ be a semistable rank $2$-vector bundle on the curve $C$. Assume there exists a wall $\mathcal{W}$ for $\iota_*E$ and $0\rightarrow F_1\rightarrow \iota_*E\rightarrow F_2\rightarrow 0$ is the destabilizing sequence. By Lemma \ref{lem:bound for beta1}, we may assume $\ch(F_1)=(2,H,s)$ and $\ch(F_2)=(-2,H,2(1-g)+d - s)$. Since $\ch_1(F_i).H/H^2=1$ is minimal, both objects $F_1$ and $F_2$ are $\sigma_{0,\alpha}$-stable for any $\alpha >0$. Therefore, $F_1$ and $F_2$ are the Harder-Narasimhan factors of $\iota_*E$ with respect to $\sigma_{0,\alpha}$ where $0 <\alpha \ll 1$. By \cite[Theorem 2.15]{BM:walls}, 
\[-\chi(F_2,F_2)=H^2+4(2(1-g)+d - s)-8\geq -2 \Longrightarrow s\leq d-\frac{3g}2.\]
Since $F_1$ destabilizes $\iota_*E$, we have \[\frac{s}{H^2}>\frac{\ch_2(\iota_*E)}{H\ch_1(\iota_*E)}\Longrightarrow s> \frac{d}2-g+1.\]
In particular, $|s|\leq \frac{g}2-2$. Proposition \ref{prop:bound for global sections} implies that
\begin{align*}
h^0(C,E) & \leq -g+1+\frac{d}{2}+\frac{1}2\lf\sqrt{4g+s^2}+\sqrt{4g+(2(g-1)-d+s)^2}\rf \\
& \leq  -g+1+\frac{d}{2}+\frac{1}2\lf\sqrt{4g+\left(\frac{g}2-2\right)^2}+\sqrt{4g+\left(2(g-1)-d+d-\frac{3g}2\right)^2}\rf \\ 
& = -\frac{g}2+3+\frac{d}2.
\end{align*}
Note that all `$=$' hold only when $d=2(g-1)$ and $s=d-\frac{3g}2$. But when $g$ is odd, we have $s\leq d-\frac{3g+1}2$, and therefore $h^0(C,E)\leq -\frac{g+1}2+3+\frac{d}2$. Hence, $h^0(C,E)\leq -\lf\frac{g+1}2\rf+3+\frac{d}2$.
\[\clf(E)\geq \frac{d}2+\lf\frac{g+1}2\rf-3-\frac{d}2+2=g-1-\lf\frac{g}2\rf.\]
Now assume there is no wall $\mathcal{W}$ for $\iota_*E$ and it is $\sigma_{0,\alpha}$-semistable where $\alpha \rightarrow 0$. Denote $x= d-2(g-1)$ and $p_1 = \overline{Z}(\iota_*E)$, so $\lVert op_1 \rVert = \sqrt{x^2+ 16g}$. Proposition \ref{prop:bound for global sections} implies that 
$$\clf(E) \geq g+1 - \dfrac{1}{2}\left\lfloor \sqrt{x^2+ 16g} \right\rfloor. $$
Thus for $-g+4 < x \leq 0$, we have $\clf(E) \geq g-1-\lf\frac{g}2\rf$. If $x \leq -g+4$, then again Proposition \ref{prop:bound for global sections} gives
$$2 h^0(C, E) \leq x +\sqrt{x^2+ 16g} \leq \dfrac{8g}{\sqrt{x^2+16g}} \leq 8. $$
Therefore the second part of Theorem \ref{main1} for $r=2$ follows by the fact that $\clf_2(C) \leq \clf_1(C) = g-1-\lf\frac{g}2\rf$.
\end{proof}
\subsection{Higher Picard number case}\label{sec:6.1}
Theorem \ref{main1} still holds when the ample divisor $H$ satisfies Assumption \star. 
\begin{description}
\item[Assumption (*)] $H^2$ divides $H.D$ for all curve classes $D$ on $X$.
\end{description}
We explain how to adapt all our arguments from  Picard rank one to this more general case.

 Let $\Lambda_H \cong \Z^3$ denote the image of the map
\[ \vv_H \colon K(X) \to \R^3, \quad E \mapsto \left(\rk(E), H\ch_1(E), \ch_2(E) \right).\]
Consider stability conditions for which the central charge factors via $\vv_H$, and denote the space of such stability conditions by $\Stab_H(X)$. The pair $\sab := \left(\Coh^\beta X, \zab\right)$ defines a stability condition on $D^b(X)$  and there is a continuous map from $\Gamma_+ \to \Stab_H(X)$. 
The slope function $\nab$ is defined in the same way. All the propositions in Section \ref{sect:background} hold for the higher Picard rank case. The Chern characters in part (\emph a) in Lemma \ref{lem:bound for beta1} should be modified to $H.\ch(F_1)=H^2$. All the other statements do not rely on the Picard rank.


\section{Smooth plane curves}
\label{section:p2}

Our method to control the dimension of global sections of semistable vector bundles (first part of Theorem \ref{main1}) can be generalized to curves on more general surfaces, especially for Fano surfaces. As a case study, we follow the argument for curves on K3 surfaces to set up a bound for smooth projective plane curves and finally compute their Clifford indices. We first review Bridgeland stability conditions on the projective plane.
\subsection{Review: space of geometric stability conditions on $D^b(\pp)$}
The space of geometric stability conditions on the projective plane $\pp$ is similar but slightly different with that of a K3 surface with Picard number one. In the projective plane case, the curve $\Gamma$ is replaced by the Le Potier curve (see \cite{Drezet-LePotier:P2,Izzet-Jack-Matthew,Li-spaceofp2,Chunyi-Xiaolei:birational}). Since the definition of Le Potier curve is rather involved, we will only use a simpler version $\tilde{\Gamma}$ which is enough for our purpose.

\begin{Def} \label{def:gcurvep2}
Let $\tilde{\gamma}:\R\rightarrow \R$ be a $1$-periodic function. When $x\in [-\frac{1}{2},\frac{1}{2}]$, 
\[ \tilde{\gamma}(x) := \begin{cases}
\frac{1}{2}x^2 - \frac{3}2 |x| +1 & \text{if $x\neq 0$} \\
0 & \text{if $x = 0$}.
\end{cases}\]
Let $\tilde{\Gamma}(x):=\frac{1}{2}x^2-\tilde{\gamma}(x)$. By abuse of notations, we also denote the graph of $\tilde{\Gamma}$ by the curve $\tilde{\Gamma}$.
\end{Def}
For $\beta \in \R$ and $\alpha > \tilde{\Gamma} (\beta)$, we define the central charge $\zab \colon K(\pp) \to \C$ as
\begin{equation} \label{eqn:Zab}
\zab(E) := -\ch_2(E) + \alpha \rk(E)  + i (\ch_1(E).H-\beta\rk(E)).
\end{equation}
By \cite[Proposition 1.10]{Li-spaceofp2}, we get a slice of stability conditions $\sab=(\Coh^\beta\pp,\zab)$ parametrized by $\tilde{\Gamma}_+$.  Results of stability condition and wall-crossings (Theorem \ref{thm:stabconstr}, Remark \ref{rem:stob} and Proposition \ref{prop:walls}) 
all hold without any change. One should be cautious that the end points of the first wall may not be on the curve $\tilde\Gamma$. 
\subsection{Upper bound on the dimension of global sections}
Let $C$ be a degree $l$ smooth irreducible curve in the projective plane $\pp$. Denote $\iota:C\hookrightarrow \pp$ the embedding morphism and $H \coloneqq \mathcal{O}_{\pp}(1)$. We recollect lemmas from the case of K3 surfaces. The next lemma generalizes \cite[Lemma 3.2]{Fe17} to objects in $D^b(\pp)$.
\begin{Lem}
\label{lem:p2length}
Fix an object $F\in \Coh^0\pp$ which is $\sigma_{0,\alpha}$-semistable for any positive real number $\alpha \ll 1$ and $\ch_1(F) \neq 0$. Then 
\begin{align*}
\hom(\mathcal O_{\pp}, F) \begin{cases}
=\rk(F) +\frac{3}2H\ch_1(F) +\ch_2(F) & \text{ when }\frac{\ch_2(F)}{H.\ch_1(F)} > -\frac{3}2, \\
\leq \rk(F) - \frac{\ch_1(F)^2}{2\ch_2(F)} & \text{ when } \ch_2(F)<0. 
\end{cases} 
\end{align*} 
\end{Lem}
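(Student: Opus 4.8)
The plan is to compute $\hom(\mathcal O_{\pp},F)$ via Riemann--Roch and then control the two higher $\Ext$ groups using the stability of $F$ at the limit $\sigma_{0,\alpha}$, $\alpha\to 0^+$. Since $\td(\pp)=1+\tfrac32 H+H^2$, Riemann--Roch gives
\[
\chi(\mathcal O_{\pp},F)=\chi(F)=\rk(F)+\tfrac32 H\ch_1(F)+\ch_2(F),
\]
which is precisely the right-hand side of the first case. Writing $\hom(\mathcal O_{\pp},F)=\chi(F)+\ext^1(\mathcal O_{\pp},F)-\ext^2(\mathcal O_{\pp},F)$, the statement reduces to controlling the two correction terms.

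First I would show $\ext^2(\mathcal O_{\pp},F)=0$ unconditionally. By Serre duality $\Ext^2(\mathcal O_{\pp},F)\cong\Hom(F,\mathcal O_{\pp}(-3))^*$; since $\mu_H(\mathcal O_{\pp}(-3))=-3\le 0$, the shift $\mathcal O_{\pp}(-3)[1]$ lies in $\Coh^0\pp$, so $\Hom(F,\mathcal O_{\pp}(-3))=\Ext^{-1}_{\Coh^0\pp}(F,\mathcal O_{\pp}(-3)[1])=0$, as negative $\Ext$ groups inside a heart vanish. Applying Serre duality once more rewrites the remaining term as $\ext^1(\mathcal O_{\pp},F)=\hom(F,\mathcal O_{\pp}(-3)[1])$, so that
\[
\hom(\mathcal O_{\pp},F)=\chi(F)+\hom(F,\mathcal O_{\pp}(-3)[1]).
\]
Everything now rests on this single $\Hom$ space.

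The object $\mathcal O_{\pp}(-3)[1]$ is $\sigma_{0,\alpha}$-stable (being a line bundle; its projection lies on $\tilde\Gamma$) with limiting slope $\nu_{0,0^+}=-\tfrac32$. In the first case the hypothesis $\ch_2(F)/(H\ch_1(F))>-\tfrac32$ says exactly $\nu_{0,0^+}(F)>\nu_{0,0^+}(\mathcal O_{\pp}(-3)[1])$, so for all small $\alpha>0$ the semistable object $F$ admits no nonzero morphism to a stable object of strictly smaller slope. Hence $\hom(F,\mathcal O_{\pp}(-3)[1])=0$ and $\hom(\mathcal O_{\pp},F)=\chi(F)$, which is the first formula.

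The main obstacle is the second case, where $\nu_{0,0^+}(F)\le-\tfrac32$ is allowed and the $\Hom$ space can be nonzero. Here I would bound $n:=\hom(F,\mathcal O_{\pp}(-3)[1])$ through the evaluation map $\ev\colon F\to\mathcal O_{\pp}(-3)^{\oplus n}[1]$ in $\Coh^0\pp$: every morphism $F\to\mathcal O_{\pp}(-3)[1]$ factors through the image $I=\im(\ev)$, forcing $\Hom(I,\mathcal O_{\pp}(-3)[1])\cong k^{\,n}$ and $\Hom(\mathcal O_{\pp}(-3)^{\oplus n}[1]/I,\mathcal O_{\pp}(-3)[1])=0$. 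As $I$ is at once a quotient of the semistable $F$ and a subobject of the semistable $\mathcal O_{\pp}(-3)^{\oplus n}[1]$, its Harder--Narasimhan slopes are trapped in $[\nu(F),-\tfrac32]$; substituting the resulting constraints on $(\rk I,\,H\ch_1(I),\,\ch_2(I))$ into the Bogomolov--Gieseker inequality and optimizing should produce $n\le\rk(F)-\tfrac{\ch_1(F)^2}{2\ch_2(F)}-\chi(F)$, which rearranges to the claimed bound. The delicate part is extracting exactly the quadratic term $-\ch_1(F)^2/(2\ch_2(F))$ from this optimization, and deciding whether to argue through $I$, through the kernel, or through the cokernel of $\ev$; this is where the derived generalization of \cite[Lemma 3.2]{Fe17} really bites.
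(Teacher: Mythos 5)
Your first case is correct and is essentially the paper's own argument: both use that $\cO_{\pp}(-3)[1]\in\Coh^0\pp$ is semistable with limiting slope $-\tfrac32$, so the slope comparison kills $\Hom(F,\cO_{\pp}(-3)[1])$, and Serre duality together with vanishing of negative $\Ext$'s inside a heart reduces $\hom(\cO_{\pp},F)$ to $\chi(F)$. The genuine gap is in the second case, exactly at the step you flag as delicate. The constraints you propose to ``optimize'' --- the Harder--Narasimhan slopes of the image $I$ lying in $[\nu(F),-\tfrac32]$, plus the Bogomolov--Gieseker inequality for $I$ --- are conditions on $\ch(I)$ alone and make no reference to $n$. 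The integer $n$ enters your setup only through $\hom(I,\cO_{\pp}(-3)[1])=n$, i.e.\ through the dimension of a Hom space into copies of a stable object, which is precisely the kind of quantity you set out to bound in the first place. As it stands the argument is circular: no optimization over the possible Chern characters of $I$ (or of the kernel or cokernel of the coevaluation) can output an upper bound for $n$, because $n$ never appears in the inequalities being optimized.

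The missing idea is a device that makes the unknown integer appear in the Chern character of a \emph{single} object to which a Bogomolov--Gieseker-type inequality applies. This is what the paper does: it forms the canonical extension
\[
0\to F\to K\to \cO_{\pp}[1]\otimes\Ext^1(\cO_{\pp}[1],F)\to 0
\]
in the tilted heart, and uses $\Ext^1(\cO_{\pp}[1],F)\cong\Hom(\cO_{\pp},F)$, so that $\ch(K)=\bigl(\rk(F)-h,\ch_1(F),\ch_2(F)\bigr)$ with $h=\hom(\cO_{\pp},F)$. On the wall where $F$ and $\cO_{\pp}$ have the same phase, both $F$ and $\cO_{\pp}[1]$ are semistable of equal phase, hence so is their extension $K$; therefore $\Delta(K)=(H\ch_1(F))^2-2(\rk(F)-h)\ch_2(F)\geq 0$, and dividing by $2\ch_2(F)<0$ gives $h\leq\rk(F)-\frac{(H\ch_1(F))^2}{2\ch_2(F)}$ in one line. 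Note that this bounds $h$ directly: no Serre duality, no decomposition $\hom=\chi+\ext^1$, and no vanishing of $\ext^2$ is needed in this case. If you insisted on your Serre-dual setup, the object to feed into the discriminant inequality would be the cone of the coevaluation $F\to\cO_{\pp}(-3)^{\oplus n}[1]$, whose Chern character is linear in $n$ --- not its image $I$ --- but you would then still have to prove that this cone is semistable for some stability condition, and the resulting inequality is not visibly the claimed one; the universal-extension trick is the route that works.
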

\begin{proof}
We first assume $\frac{\ch_2(F)}{H\ch_1(F)}> -\frac{3}2$. The object $\cO_{\pp}(-3)[1]\in \Coh^0\pp$ is $\sigma_{0,\alpha}$-semistable and $\nu_{0,\alpha}(\cO_{\pp}(-3)[1]) = -\frac{3}2< \nu_{0,\alpha}(F)$, thus 
	$\Hom (F,\cO_{\pp}(-3)[i]) = 0,$
	for $i\leq 1$. By Serre duality, we have $\Hom(\cO_{\pp},F[2-i])=0$ for $i\leq 1$. Since both $F$ and $\cO_{\pp}$ are in the heart $\Coh^0\pp$, we have 
	$\Hom (\cO_{\pp},F[i]) = 0$,
	for $i\leq -1$.  Therefore, 
	\[\Hom (\cO_{\pp},F)=\chi(\cO_{\pp},F)=\rk(F)+\frac{3}2H.\ch_1(F)+\ch_2(F).\]
Now assume $\ch_2(F) <0$. Define the object $K \in D^b(\pp)$ as the canonical extension 
\[0\rightarrow F\rightarrow K\rightarrow \cO_{\pp}[1]\otimes \Ext^1(\cO_{\pp}[1],F)\rightarrow 0\]
in $\Coh^{0+}\pp$. We have $\ch(K)=(\rk(F)-h,\ch_1(F),\ch_2(F))$, where $h$ denotes $\dim\Ext^1(\cO_{\pp}[1],F)=\hom (\cO_{\pp},F)$. The object $K$ is semistable on the wall that the objects $F$ and $\mathcal{O}_X$ have the same phase, in particular, $\Delta(K)\geq 0$:
\begin{align*}
0\leq & (H.\ch_1(K))^2-2\ch_2(K)(\rk(F)-h)\\
\implies h \leq & \rk(F)-\frac{(H.\ch_1(K))^2}{2\ch_2(K)}=\rk(F)-\frac{(H.\ch_1(F))^2}{2\ch_2(F)}.
\end{align*}
\end{proof}

It is directly from the lemma that there is no such stable object with $\frac{\ch_2(F)}{H.\ch_1(F)}\in (-\frac{3}2,-1)$.

Define the function $L \colon (a,b) \in \mathbb{H} = \mathbb{R}  \times \mathbb{R}^{>0} \rightarrow \mathbb{R}^{>0}$ such that 
\begin{equation*}
L(a,b) = 
\begin{cases}
\dfrac{3}{2}b+a, & \text{ if } \;\;\; \dfrac{a}{b} \in I \coloneqq [-1, +\infty); \\
 -\dfrac{b^2}{2a}, & \text{ if } \;\;\; \dfrac{a}{b} \in J \coloneqq (-\infty , -1]. 
\end{cases} 
\end{equation*}
Note that $L(a,b) >0$ for any pair $(a,b) \in \mathbb{H}$. 
\begin{Lem} \label{norm}
	The function $L$ satisfies the triangle inequality in $\mathbb{H}$, in other words, for any two vectors $v_1 = (a_1,b_1)$ and $v_2 = (a_2,b_2)$ in $\mathbb{H}$, we have $L(v_1 +v_2) \leq L(v_1) +L(v_2)$. Moreover, L(kv) =k.L(v) for any $v \in \mathbb{H}$ and $k>0$.   
\end{Lem}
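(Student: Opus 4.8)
The plan is to deduce the triangle inequality from the convexity of the unit sublevel set of $L$, after first recording the homogeneity. The identity $L(kv)=kL(v)$ for $k>0$ is immediate: the scaling $(a,b)\mapsto(ka,kb)$ preserves the ratio $a/b$, so the same branch of the definition applies, and each branch ($\tfrac{3}{2}b+a$ and $-\tfrac{b^2}{2a}$) is homogeneous of degree one. Because $L$ is positively homogeneous and strictly positive on $\mathbb{H}$, subadditivity will follow from convexity of the set $K:=\{v\in\mathbb{H}:L(v)\le 1\}$ by a standard normalization argument, so the heart of the proof is to show that $K$ is convex.

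First I would check that, for fixed $b>0$, the map $a\mapsto L(a,b)$ is increasing: on the branch $a\le -b$ one has $\tfrac{\partial}{\partial a}\!\left(-\tfrac{b^2}{2a}\right)=\tfrac{b^2}{2a^2}>0$, on the branch $a\ge -b$ the function is the increasing linear map $a+\tfrac{3}{2}b$, and the two agree at $a=-b$ (both equal $\tfrac{b}{2}$). Hence $K$ has the form $\{(a,b):b>0,\ a\le\psi(b)\}$, where $\psi(b)$ is the unique root of $L(a,b)=1$. Solving $L=1$ on each branch, and noting that the junction value $\tfrac{b}{2}$ crosses $1$ exactly at $b=2$, yields
\begin{equation*}
\psi(b)=\begin{cases} 1-\tfrac{3}{2}b, & 0<b\le 2,\\[2pt] -\tfrac{b^2}{2}, & b\ge 2.\end{cases}
\end{equation*}
Since $K$ is the region under the graph of $\psi$, it is convex exactly when $\psi$ is concave. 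Each piece of $\psi$ is concave (one is linear, the other has $\psi''=-1$), the two pieces agree at $b=2$, and the left and right slopes there are $-\tfrac{3}{2}$ and $-2$; as the slope does not increase across the junction, $\psi$ is concave on all of $(0,\infty)$, so $K$ is convex.

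With convexity of $K$ established, the triangle inequality is the expected normalization. Given $v_1,v_2\in\mathbb{H}$, set $t_i=L(v_i)>0$; then $v_i/t_i\in K$ by homogeneity, and since $\mathbb{H}$ is a convex cone the convex combination $\tfrac{t_1}{t_1+t_2}\cdot\tfrac{v_1}{t_1}+\tfrac{t_2}{t_1+t_2}\cdot\tfrac{v_2}{t_2}=\tfrac{v_1+v_2}{t_1+t_2}$ again lies in $K$, whence $L(v_1+v_2)\le t_1+t_2=L(v_1)+L(v_2)$. The step I expect to require the most care is the behaviour of $L$ across the interface $a/b=-1$ separating the regions $I$ and $J$: one must confirm both that the two formulas glue continuously (so that $K$ is genuinely a subgraph) and that the boundary curve $a=\psi(b)$ has no convex kink where it meets this interface, which happens precisely at $b=2$. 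Both points reduce to the elementary slope comparison above, so I do not anticipate a serious obstacle beyond bookkeeping the two cases.
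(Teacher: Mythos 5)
Your proof is correct, but it takes a genuinely different route from the paper's. The paper argues directly by cases on which branch ($I$ or $J$) each of $v_1$, $v_2$, and $v_1+v_2$ lies in: when both vectors are in $I$ the inequality is an equality; when both are in $J$ it follows from an AM--GM type estimate; the mixed case with $v_1+v_2\in I$ uses the pointwise comparison $\tfrac{3}{2}b+a\le -\tfrac{b^2}{2a}$ on $J$; and the remaining mixed case with $v_1+v_2\in J$ is handled by an interpolation trick, choosing $k\in[0,1)$ with $(a_1+ka_2)/(b_1+kb_2)=-1$ so as to reduce to the two previous cases. You instead observe that $L$ is positively homogeneous and strictly positive, so subadditivity is equivalent (via the standard Minkowski-gauge normalization you carry out at the end) to convexity of the sublevel set $K=\{L\le 1\}$; you then compute the boundary curve $a=\psi(b)$ explicitly, with $\psi$ piecewise linear/quadratic, and verify concavity by checking each piece and the slope comparison $-\tfrac32\ge -2$ at the junction $b=2$. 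Your computations are all right: the two branches of $L$ glue continuously along $a=-b$ (both equal $\tfrac b2$), $a\mapsto L(a,b)$ is strictly increasing with range $(0,\infty)$, and the junction of the two pieces of $\psi$ at $b=2$ is consistent. What your approach buys is conceptual economy: all four of the paper's cases, including the somewhat delicate interpolation in the last one, are absorbed into the one-variable concavity check for $\psi$. What the paper's approach buys is that it stays entirely at the level of elementary algebraic inequalities, with no need to set up the gauge-function formalism or to invoke the equivalence between subadditivity and convexity of the unit ball.
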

\begin{proof}
    The second claim follows clearly by definition. To prove the first claim, we consider four different  cases. 
    \begin{enumerate}
    \item  If both $a_1/b_1$ and $a_2/b_2$ are in $I = [-1, +\infty)$, then $L(v_1+v_2) = L(v_1)+L(v_2)$.
    \item If both $a_1/b_1$ and $a_2/b_2$ are in $J = (-\infty , -1]$, then  
	\begin{equation*}
	0 \leq \dfrac{-1}{a_1+a_2}\bigg(b_1^2\bigg(\dfrac{a_2}{a_1}\bigg) + b_2^2\bigg(\dfrac{a_1}{a_2} \bigg) -2b_1b_2\bigg).
	\end{equation*}
	This implies 
	\begin{equation*}
	0 \leq  b_1^2 \bigg(\dfrac{1}{a_1+a_2} - \dfrac{1}{a_1} \bigg) + b_2^2 \bigg(\dfrac{1}{a_1+a_2} - \dfrac{1}{a_2} \bigg) + \dfrac{2b_1b_2}{a_1+a_2} = -L(v_1+v_2) +L(v_1) +L(v_2). 
	\end{equation*}
    \item If $a_1/b_1 \in I$, $a_2/b_2 \in J$ and $(a_1+a_2)/(b_1+b_2) \in I$, then since $a_2/b_2 \leq -1$, we have 
	\begin{equation*}
	\dfrac{3}{2}b_2+a_2 \leq -\dfrac{b_2^2}{2a_2} 
	\end{equation*} 
	which implies 
	\begin{equation*}
	L(v_1 +v_2) = \dfrac{3}{2}(b_1+b_2) + a_1+a_2 \leq   \dfrac{3}{2}b_1+a_1 + -\dfrac{b_2^2}{2a_2} = L(v_1) + L(v_2)
 	\end{equation*}
    \item  If $a_1/b_1 \in I$, $a_2/b_2 \in J$ and $(a_1+a_2)/(b_1+b_2) \in J$, then there is a non-negative  real number $k<1$ such that $(a_1+ka_2)/(b_1+kb_2) = -1$, then case (c) implies that 
	\begin{equation*}
	L(v_1 +kv_2) \leq L(v_1) + k.L(v_2).
	\end{equation*}	
	Therefore, case (b) gives
	\begin{equation*}
	L(v_1+v_2) \leq L\big((1-k)v_2\big) + L(v_1 +kv_2) \leq (1-k).L(v) + k.L(v) + L(v_2),
	\end{equation*}
    \end{enumerate}
	which proves the claim.
\end{proof}
Fix a semistable rank $r$-vector bundle $E$ of degree $d$ on the curve $C$. The same argument as in \cite[Proposition 3.4]{Fe17} implies that there exists $\epsilon >0$ such that the Harder-Narasimhan filtration of $\iota_*E$ is a fixed sequence 
\begin{equation*}
0 = \tilde{E}_0 \subset \tilde{E}_1 \subset ... \subset \tilde{E}_{n-1} \subset \tilde{E}_n = \iota_*E
\end{equation*}
for all stability conditions $\sigma_{0,\alpha}$ where $0 < \alpha < \epsilon$. Let $P_{\iota_*E}$ be the polygon with the extremal points $p_i \coloneqq \big( \ch_2(\tilde{E}_i) , \ch_1(\tilde{E}_i) \big) \in \mathbb{R}^2$ for $i=0,...,n$. Then Lemma \ref{lem:p2length} implies that
\begin{equation}\label{upper bounds in pp}
h^0(X,\iota_*E) \leq \rk(E) + \sum_{i=1}^{n} L(\overrightarrow{p_ip_{i-1}}).
\end{equation}
Note that by definition, the curve with the equation $y=x^2/2$ is above the curve $\tilde{\Gamma}$. Also when $0 \leq x <1$, the function $\tilde{\Gamma}(x)\leq -\frac{1}2x$. Therefore, any point $(\beta,\alpha)$ in the gray area in Figure \ref{fig:p2wall} gives a Bridgeland stability condition $\sigma_{\beta,\alpha}$. 

\begin{figure}[h]
\begin{center}
\tikzset{%
    add/.style args={#1 and #2}{
        to path={%
 ($(\tikztostart)!-#1!(\tikztotarget)$)--($(\tikztotarget)!-#2!(\tikztostart)$)%
  \tikztonodes},add/.default={.2 and .2}}
}
\scalebox{0.6}{
\begin{tikzpicture}
     \newcommand\bt{0.75}
     \newcommand\bo{-3.9}
     
\fill [fill=gray!40!white] (0,0) parabola (4,8) parabola [bend at end] (-4,8) parabola [bend at end] (0,0); 
  \filldraw[fill=gray!40!white, draw=white] (0,0)--(0,-.75)--(1.3,-.75)--(1.3,.75)--(0,.75)--(0,0); 
\draw [dashed] (0,0) parabola (4,8); 
			\draw [dashed] (0,0) parabola (-4,8); 
			
       \coordinate (B2) at (\bt,\bt*\bt/2);
       
       \draw[dashed] (B2) -- (\bt,0) node {$\bullet$} node[below] {$\beta_2'$};
       \coordinate (B1) at (\bo,\bo*\bo/2);
       
       \draw[dashed] (B1) -- (\bo,0) node {$\bullet$} node[below] {$\beta_1'$};

\draw[->] [opacity=1] (-4.3,0) -- (0,0) node[above right] {}-- (4.3,0) node[below right, opacity =1] {$\frac{H\ch_1}{rk}$};



\draw[->][opacity=1] (0,-1)-- (0,0) node {$\bullet$} node [below left] {O} --  (0,8) node[right, opacity=1] {$\frac{\ch_2}{rk}$};



    \draw [add = 0 and 0] (B2) to (B1) node[above,right] {$\mathcal{W}$};
    \draw (4,8) node[right] {$y=\frac{x^2}{2}$};
    \draw[dashed] (0,0) -- (0.5,-0.25);
    \draw[dashed] (0,0)--(3,-1.5) node[right]{$y=-\frac{x}2$};
    \draw (1.3,-.75)--(1.3,.75);
    \draw[dashed] (1.3,.75)--(0,.75) node[left] {$.5$};
    \draw (1.3,-.75)--(0,-.75) node[left] {$-.5$};;
    \draw (1.3,-.75) node {$\bullet$} node[below] {$p$};
    

\end{tikzpicture}
}
\end{center}
\caption{First wall for $\iota_*E$.} \label{fig:p2wall}
\end{figure}

\begin{Lem}[Lemma \ref{lem:ph}]
\label{lem:p2phasebound} 
For any semistable factor $E_i \coloneqq \tilde{E}_i/\tilde{E}_{i-1}$ in the Harder-Narasimhan filtration of $\iota_*E$, we have the slope $\frac{\ch_2(E_i)}{H\ch_1(E_i)}\in \left[\frac{d}{2rl}-\frac{l}2,\frac{d}{2rl}\right]$. When $d<rl$, the slope is either in the range $\left[\frac{d}{2rl}-\frac{l}2,-\frac{1}2\right]$ or $\left[-\frac{l-1}2,\frac{d}r-l+\frac{1}2\right]$.
\end{Lem}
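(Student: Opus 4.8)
The statement is the $\pp$-analogue of Lemmas \ref{lem:fw} and \ref{lem:ph} (indeed it is tagged as such), so the plan is to follow the same three-move pattern: fix the slope of the destabilising wall of $\iota_*E$, bound its width, and then feed these into the convexity argument for the Harder--Narasimhan polygon, using that every semistable factor lies outside $\tilde\Gamma_+$. First I would record the invariants of $\iota_*E$. Since $[C]=lH$ and $E$ has rank $r$ and degree $d$, Grothendieck--Riemann--Roch gives
\[\ch(\iota_*E)=\Big(0,\ rlH,\ d-\tfrac{rl^2}{2}\Big),\]
(one checks this against $\chi(\iota_*E)=\chi(E)$ for $r=1$), so $\iota_*E$ is a rank-zero object with $H\ch_1(\iota_*E)=rl$ and slope $\ch_2/H\ch_1=\frac{d}{rl}-\frac l2$. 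By the $\pp$-version of Proposition \ref{prop:walls}, the wall $\mathcal W$ bounding the large-volume chamber in which $\iota_*E$ is stable is a segment whose supporting line has slope exactly $\frac{d}{rl}-\frac l2$; I write its endpoints as $(\beta_1',\cdot)$ and $(\beta_2',\cdot)$ with $\beta_1'<0<\beta_2'$, which (as in Figure \ref{fig:p2wall}) lie on $y=x^2/2$.

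Next I would prove the width bound, the analogue of Lemma \ref{lem:fw}. From a destabilising sequence $0\to F_2\to\iota_*E\to F_1\to0$ in $\Coh^0\pp$ I take cohomology to get $0\to\HH^{-1}(F_1)\to F_2\to\iota_*E\to\HH^0(F_1)\to0$ in $\Coh\pp$, let $T(F_2)$ be the torsion of $F_2$, and run the same rank-and-torsion bookkeeping. The only change is numerical: because $\ch_1(\iota_*E)=rlH$ while $\iota_*E$ has generic rank $r$ along $C$, exactness at $\iota_*E$ now reads $rl-a-\tau\le sl$ (with $s=\rk F_2$, $a=H\ch_1(\HH^0F_1)$, $\tau=H\ch_1(T(F_2))$), so the jump in $H\ch_1/(H^2\rk)$ between $F_2/T(F_2)$ and $\HH^{-1}(F_1)$ is at most $l$ rather than $1$. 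Hence $\beta_2'-\beta_1'\le l$; since the endpoints are on $y=x^2/2$ the slope of $\mathcal W$ equals $\tfrac12(\beta_1'+\beta_2')=\frac{d}{rl}-\frac l2$, and combining the two relations forces $\beta_2'\le\frac{d}{rl}$ and $\beta_1'\ge\frac{d}{rl}-l$.

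For the first interval I would then repeat the argument of Lemma \ref{lem:ph} essentially verbatim. Each semistable factor $E_i$ of the $\sigma_{0,\alpha}$-filtration ($0<\alpha\ll1$) has positive rank and $pr(E_i)\notin\tilde\Gamma_+$; in particular $pr(E_i)$ lies weakly below the parabola $y=x^2/2$ by the Bogomolov inequality. Convexity of $P_{\iota_*E}$ then traps the slope $\ch_2(E_i)/H\ch_1(E_i)$ of every factor between the slope of the ray from the origin to the $\beta_1'$-endpoint and that to the $\beta_2'$-endpoint of $\mathcal W$, i.e. between $\tfrac{\beta_1'}2$ and $\tfrac{\beta_2'}2$; the endpoint bounds just obtained turn this into the range $\big[\frac{d}{2rl}-\frac l2,\ \frac{d}{2rl}\big]$.

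The refinement for $d<rl$ is where I expect the real difficulty to lie. Here $\beta_2'=\frac{d}{rl}\in(0,1)$, and the crude bound must be sharpened using the precise shape of the simplified Le Potier curve: on $[-\tfrac12,\tfrac12]$ one has $\tilde\Gamma(x)=\tfrac32|x|-1$, while on $[\tfrac12,1]$ one has $\tilde\Gamma(x)=-\tfrac x2$, so the ray-slope from the origin to a point of $\tilde\Gamma$ is $\le-\tfrac12$ throughout $(0,1)$, and by Lemma \ref{lem:p2length} the band $-\tfrac32<\ch_2/H\ch_1<-1$ contains no semistable factor at all. Inserting this ``spike'' into the convexity argument splits the admissible positions of $pr(E_i)$ into two lobes, separated by the dip of $\tilde\Gamma$ near $x\equiv0$: a factor whose $\ch_1/\rk$ falls in the dipped range is pushed to a very negative slope in $\big[\frac{d}{2rl}-\frac l2,-\frac12\big]$, while a factor lying on the linear branch $\tilde\Gamma(x)=-x/2$ has slope confined to the tighter band $\big[-\frac{l-1}2,\ \frac dr-l+\frac12\big]$. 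The delicate point, and the main obstacle, is the exhaustive case check that no factor can straddle the spike; this requires the explicit inequalities for $\tilde\Gamma$ together with the endpoint bounds $\beta_1'\ge\frac{d}{rl}-l$, $\beta_2'\le\frac{d}{rl}$ established above.
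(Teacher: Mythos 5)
Your treatment of the first claim is essentially the paper's own proof: the same rank-and-torsion bookkeeping as in Lemma \ref{lem:fw} applied to the destabilizing sequence gives $\beta_2'-\beta_1'\le l$, Proposition \ref{prop:walls} pins the slope of $\cW$ at $\frac{d}{rl}-\frac{l}{2}=\frac{1}{2}(\beta_1'+\beta_2')$, hence $\beta_2'\le\frac{d}{rl}$ and $\beta_1'\ge\frac{d}{rl}-l$, and the convexity argument of Lemma \ref{lem:ph} traps every factor's slope in $\left[\frac{\beta_1'}{2},\frac{\beta_2'}{2}\right]$. That part is fine.

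The second claim is where there is a genuine gap, on two counts. First, the dichotomy in the lemma is a dichotomy on the wall $\cW$, not on the individual factors: either \emph{all} Harder--Narasimhan factors have slope in $\left[\frac{d}{2rl}-\frac{l}{2},-\frac{1}{2}\right]$, or \emph{all} have slope in $\left[-\frac{l-1}{2},\frac{d}{r}-l+\frac{1}{2}\right]$. This is how it is used in Theorem \ref{thm:p2} (``if the range of the slopes \dots is given by\dots''), where each alternative produces its own triangle $o\tilde{p}q$. Your per-factor version (``each factor sits in one lobe or the other'') is strictly weaker: in the regime $r(l-1)\le d<rl$ where Theorem \ref{thm:p2} invokes this case, the two intervals overlap and their union is the single interval $\left[\frac{d}{2rl}-\frac{l}{2},\frac{d}{r}-l+\frac{1}{2}\right]$, so a per-factor disjunction says nothing beyond the union bound and cannot feed the two separate polygon estimates. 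Second, your proposed mechanism (the dip of $\tilde{\Gamma}$ near $x=0$ plus the absence of stable objects with slope in $(-\frac{3}{2},-1)$) is not what drives the refinement, and you yourself flag its key step (``no factor can straddle the spike'') as unresolved. The actual input is the one the paper warns about when it says the endpoints of the first wall need not lie on $\tilde{\Gamma}$: the stability region contains, besides $\{\alpha>\frac{\beta^2}{2}\}$, the box $\{0\le\beta\le 1,\ \alpha\ge-\frac{\beta}{2}\}$ below the parabola (Figure \ref{fig:p2wall}). Since $\beta_2'<1$ when $d<rl$, the wall continues past the parabola into this box and exits it either through the vertical edge $\beta=1$ or through the edge $\overline{op}$ of slope $-\frac{1}{2}$. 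In the first case, $F_2/T(F_2)\in\cT^{\beta}$ for $\beta\to 1$ forces $\frac{\ch_1(F_2/T(F_2)).H}{sH^2}\ge 1$, so the width inequality gives $\frac{\ch_1(H^{-1}(F_1)).H}{sH^2}\ge 1-l$; hence $\cW$ lies below the parallel line $L$ through $\left(1-l,\frac{(l-1)^2}{2}\right)$, which meets $\beta=1$ at height $\frac{d}{r}-l+\frac{1}{2}$, and every factor's slope lands in $\left[-\frac{l-1}{2},\frac{d}{r}-l+\frac{1}{2}\right]$. In the second case the right endpoint of $\cW$ lies on the ray of slope $-\frac{1}{2}$, so every factor's slope is at most $-\frac{1}{2}$. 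None of this appears in your sketch, so the proposal does not establish the second claim.
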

\begin{proof}
Let $0 \rightarrow F_2 \rightarrow \iota_*E \rightarrow F_1 \rightarrow 0$ be the destabilizing sequence at the wall $\cW$ for $\iota_*E$ which passes a stability condition of form $\sigma_{0,\alpha}$. We have $\ch_1(\iota_* E) = rlH$ and $\ch_1(H^0(F_1)) = alH$ for some integer $a \geq 0$. Denote $\rk(H^{-1}(F_1)) = \rk(F_2) = s$, $\ch_1(H^{-1}(F_1)) = d_1H$ and $\ch_1(F_2) = d_2H$. Let $T(F_2)$ be the maximal torsion subsheaf of $F_2$, then $\ch_1(T(F_2)) = tlH$ for some integer $t \geq 0$. The same argument as in the first part of Lemma \ref{lem:fw} implies that  
\begin{equation*}
rl-al \leq sl+tl.
\end{equation*}
Therefore, 
\begin{equation}\label{in.diff}
\dfrac{\ch_1\big(F_2/T(F_2)\big).H}{sH^2} - \dfrac{\ch_1\big(H^{-1}(F_1)\big).H}{sH^2} = \dfrac{d_2-tl -d_1}{s} = \dfrac{rl-al-tl}{s} \leq l.
\end{equation}
Now assume the wall $\mathcal{W}$ intersects the parabola with the equation $y=x^2/2$ at two points $(\beta_2' , \beta_2'^2/2)$ and $(\beta_1', \beta_1'^2/2)$ where $\beta_1' <0 < \beta_2'$. By applying the same argument as in Lemma \ref{lem:fw}, the inequality \eqref{in.diff} gives $\beta_2'-\beta_1'\leq l$. Proposition \ref{prop:walls} implies that the slope of the wall $\mathcal{W}$ is 
\begin{equation*}
\dfrac{\frac{1}2(\beta_2')^2 - \frac{1}2(\beta_1')^2}{\beta_2'-\beta_1'} = \dfrac{\beta_2'+\beta_1'}{2} = \dfrac{\ch_2(\iota_*E)}{H\ch_1(\iota_*E)} = \dfrac{d-r\frac{l^2}2}{rl} = \dfrac{d}{rl} - \dfrac{l}{2}.
\end{equation*}
Therefore, $\beta_2' \leq \dfrac{d}{rl}$ and $\beta_1' \geq \dfrac{d}{rl} -l$. By a similar argument as in Lemma \ref{lem:ph}, one can show that for each of the Harder-Narasimhan factors $E_i$, 
\begin{equation*}
\dfrac{\beta_1'}{2} = \dfrac{\beta_1'^2/2}{\beta_1'} \leq 
\frac{\ch_2(E_i)}{H\ch_1(E_i)} \leq \dfrac{\beta_2'^2/2}{\beta_2'} = \dfrac{\beta_2'}{2}. 
\end{equation*}
Thus the first claim follows. \\
Now assume $d <rl$, so $\beta_2' <1$. If the wall $\mathcal{W}$ intersects the line with the equation $x=-1$ at a point $(1,y)$ for $-1/2 < y < 1/2$, then the same argument as in Lemma \ref{lem:fw} implies that 
$$1 \leq \dfrac{\ch_1\big(F_2/T(F_2)\big).H}{sH^2}$$
and inequality \eqref{in.diff} implies that 
$$1-l \leq   \dfrac{\ch_1\big(H^{-1}(F_1)\big).H}{sH^2} \leq \beta_1.$$ Therefore the wall $\mathcal{W}$ is below the line $L$ which has the same slope as $\mathcal{W}$ and passes through the point $(1-l,\frac{(l-1)^2}2)$. The line $L$ intersects the line $x=-1$ at the point $(1,\frac{d}r-l+\frac{1}2)$. Thus the same argument as that in Lemma \ref{lem:ph} shows that each slope $\frac{\ch_2(E_i)}{H\ch_1(E_i)}$ is in the range $[-\frac{1-l}2,\frac{d}r-l+\frac{1}2]$.\\
If we have $y<-1/2$, then the wall intersects the line segment $\overline{op}$ which has slope $-1/2$, see Figure \ref{fig:p2wall}. Thus the same argument as in Lemma \ref{lem:ph} implies that $\frac{\ch_2(E_i)}{H\ch_1(E_i)} \leq -\frac{1}2$ and the second claim follows. 
\end{proof}
\begin{Thm}\label{thm:p2}
	Let $C$ be a degree $l(\geq 5)$ smooth irreducible curve on the projective plane. Let $E$ be a semistable vector bundle with rank $r$ and degree $d$ such that $0\leq d\leq rl(l-3)/2$. Then
	\[\dim H^0(C,E) \leq \begin{cases}
	r+\left(\frac{3}{2l} + \frac{d}{2rl^2}\right)d& \text{if $d \geq rl$} \\
	\max\{3r+d-rl,r+\frac{rl+r}{rl^2-d}d\} & \text{if $r(l-1)\leq d <rl$}
	\end{cases}\]
\end{Thm}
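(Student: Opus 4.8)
\emph{Setup.} The plan is to run the polygon argument from the K3 case, with the sublinear length $L$ of Lemma~\ref{norm} in place of the Euclidean norm. By Hirzebruch--Riemann--Roch on $\pp$ one computes $\ch(\iota_*E)=\bigl(0,\,rlH,\,d-\tfrac{rl^2}{2}\bigr)$, so in the coordinates $(\ch_2,H\ch_1)$ the top vertex of the polygon is $q=\bigl(d-\tfrac{rl^2}{2},\,rl\bigr)$, and the hypothesis $d\le \tfrac{rl(l-3)}2$ is exactly $\tfrac{\ch_2(\iota_*E)}{H\ch_1(\iota_*E)}\le-\tfrac32$. Writing $E_i\coloneqq\tilde E_i/\tilde E_{i-1}$ for the Harder--Narasimhan factors of $\iota_*E$ at $\sigma_{0,\alpha}$ as $\alpha\to0^+$, and $\overrightarrow{p_{i-1}p_i}=(\ch_2(E_i),H\ch_1(E_i))$, the fact that $\sum_i\rk(E_i)=\rk(\iota_*E)=0$ turns the estimate \eqref{upper bounds in pp} into $h^0(C,E)\le\sum_i L(\overrightarrow{p_{i-1}p_i})$.

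\emph{Reduction to an apex.} Lemma~\ref{lem:p2phasebound} confines every factor slope $\tfrac{\ch_2(E_i)}{H\ch_1(E_i)}$ to an interval $[s_-,s_+]$. Let $u_+,u_-$ be the directions of slopes $s_+,s_-$ with positive second coordinate. Each edge direction lies in the cone $\R_{\ge0}u_++\R_{\ge0}u_-$, so $\overrightarrow{p_{i-1}p_i}=\lambda_iu_++\mu_iu_-$ with $\lambda_i,\mu_i\ge0$; positive homogeneity and the triangle inequality of Lemma~\ref{norm} then give
\[
\sum_i L(\overrightarrow{p_{i-1}p_i})\ \le\ \Bigl(\textstyle\sum_i\lambda_i\Bigr)L(u_+)+\Bigl(\textstyle\sum_i\mu_i\Bigr)L(u_-)=L(\overrightarrow{op'})+L(\overrightarrow{p'q}),
\]
where $p'=(a',b')$ is the apex at which the ray of slope $s_+$ from $o$ meets the ray of slope $s_-$ into $q=(q_a,q_b)$, namely $b'=\tfrac{q_a-s_-q_b}{s_+-s_-}$ and $a'=s_+b'$. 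The theorem now reduces to locating $p'$ and evaluating two values of $L$.

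\emph{The computations.} For $d\ge rl$ the general interval of Lemma~\ref{lem:p2phasebound} gives $s_+=\tfrac{d}{2rl}$, $s_-=\tfrac{d}{2rl}-\tfrac l2$; then $q_a-s_-q_b=\tfrac d2$ and $s_+-s_-=\tfrac l2$ yield $p'=\bigl(\tfrac{d^2}{2rl^2},\tfrac dl\bigr)$. Here $s_+\ge\tfrac12>-1$ while $d\le\tfrac{rl(l-3)}2$ forces $s_-<-1$, so $\overrightarrow{op'}$ sits on the linear branch and $\overrightarrow{p'q}$ on the reciprocal branch of $L$, giving $L(\overrightarrow{op'})=\tfrac{3d}{2l}+\tfrac{d^2}{2rl^2}$ and $L(\overrightarrow{p'q})=-\tfrac{b^2}{2a}=r$; their sum is $r+\bigl(\tfrac3{2l}+\tfrac{d}{2rl^2}\bigr)d$. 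For $r(l-1)\le d<rl$ the second part of Lemma~\ref{lem:p2phasebound} offers two possible intervals according to where the wall $\mathcal W$ meets the line $x=-1$, and since we cannot decide a priori which occurs we retain the larger estimate -- this is the origin of the $\max$. In the first case slopes lie in $[-\tfrac{l-1}2,\tfrac dr-l+\tfrac12]$ (the bound $d\ge r(l-1)$ keeps $s_+\ge-\tfrac12$, on the linear branch), giving $p'=(d-rl+\tfrac r2,\,r)$ and $L(\overrightarrow{op'})+L(\overrightarrow{p'q})=(2r+d-rl)+r=3r+d-rl$. In the second, slopes lie in $[\tfrac{d}{2rl}-\tfrac l2,-\tfrac12]$, the apex is $p'=\bigl(-\tfrac{rld}{2D},\tfrac{rld}{D}\bigr)$ with $D=rl(l-1)-d$, and using $rl^2-d=D+rl$ the two values collapse to $\tfrac{rl(rl+d)}{rl^2-d}=r+\tfrac{rl+r}{rl^2-d}d$. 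When $\iota_*E$ is already $\sigma_{0,\alpha}$-semistable the polygon degenerates to the segment $oq$, whose $L$-length is dominated by both estimates.

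\emph{Main obstacle.} The delicate part is the regime $r(l-1)\le d<rl$: one must read off the two admissible wall positions from Lemma~\ref{lem:p2phasebound}, verify in each which branch of $L$ each extreme direction falls on, and push through the algebraic simplification of the second case. Equally important is the bookkeeping that the additive constant coming from \eqref{upper bounds in pp} is $\sum_i\rk(E_i)=0$ and not $r$; this is precisely what makes the final constant $r$ rather than $2r$, since each reciprocal edge $\overrightarrow{p'q}$ already contributes a clean $L$-value equal to $r$.
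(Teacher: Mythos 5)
Your proposal is correct and takes essentially the same approach as the paper's proof: the same confinement of the Harder--Narasimhan factor slopes via Lemma \ref{lem:p2phasebound}, the same apex points ($\tilde p=(\tfrac{d^2}{2rl^2},\tfrac dl)$, $(d-rl+\tfrac r2,r)$, and $(-\tfrac{rld}{2(rl^2-rl-d)},\tfrac{rld}{rl^2-rl-d})$), the same identification of which branch of $L$ each extreme direction lies on, and the same algebraic simplifications yielding $r+(\tfrac{3}{2l}+\tfrac{d}{2rl^2})d$, $3r+d-rl$, and $r+\tfrac{rl+r}{rl^2-d}d$. Your explicit bookkeeping that $\sum_i\rk(E_i)=\rk(\iota_*E)=0$, so that the bound used is $h^0(C,E)\le\sum_i L(\overrightarrow{p_{i-1}p_i})$ with no extra additive $r$, is exactly what the paper's own computation uses (and clarifies the statement of \eqref{upper bounds in pp}).
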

\begin{proof}
When $d\geq rl$, Lemma \ref{lem:p2phasebound} implies that the polygon $P_{\iota*E}$ is inside the triangle $o\tilde{p}q$ where $ \tilde{p} = (\frac{d^2}{2rl^2}, \frac{d}l)$ and $q = (-\frac{rl^2}2 +d, rl)$. Then Lemma \ref{norm} and convexity of the polygon $P_{\iota*E}$ imply that  
\begin{align*}
h^0(C,E)& =\hom(\cO_{\pp},\iota_* E) \leq \sum_{i=1}^{n}L(\overrightarrow{p_ip_{i-1}})  \\ 
& \leq L(\overrightarrow{o\tilde{p}}) + L(\overrightarrow{\tilde{p}q}) = \frac{3d}{2l} +\frac{d^2}{2rl^2}+\frac{(rl-\frac{d}l)^2}{rl^2-2d+\frac{d^2}{rl^2}}\\
& = \frac{3d}{2l} +\frac{d^2}{2rl^2} + r.
\end{align*} 
When $d<rl$,  if the range of the slopes in Lemma \ref{lem:p2phasebound}  is given by $\left[\frac{d}{2rl}-\frac{l}2,-\frac{1}2\right]$, then we may let $\tilde{p}$ be at $\left(\frac{-rdl}{2rl^2-2rl-2d},\frac{rdl}{rl^2-rl-d}\right)$. Therefore,  
\begin{align*}
h^0(C,E)& \leq L(\overrightarrow{o\tilde{p}}) + L(\overrightarrow{\tilde{p}q})= \frac{rdl}{rl^2-rl-d}  +\frac{\left(rl-\frac{rdl}{rl^2-rl-d}\right)^2}{rl^2-2d-\frac{rdl}{rl^2-rl-d}}\\
& = \frac{rdl}{rl^2-rl-d}  +\frac{rl}{rl^2-d}\left(rl-\frac{rdl}{rl^2-rl-d}\right)\\
& = r+d\frac{rl+r}{rl^2-d}.
\end{align*} 
Also if the range of the slopes in Lemma \ref{lem:p2phasebound}  is given by $\left[-\frac{l-1}2,\frac{d}r-l+\frac{1}2\right]$, then we may let $\tilde{p}$ be at $\left(d-rl+\frac{r}2,r\right)$. Therefore,  
\begin{align*}
h^0(C,E)& \leq L(\overrightarrow{o\tilde{p}}) + L(\overrightarrow{\tilde{p}q})= \frac{3}2r+d-rl+\frac{r}2  +\frac{r^2(l-1)^2}{rl^2-2rl+r}=3r+d-rl,
\end{align*} 
which completes the proof.
\end{proof}
As an interesting consequence, part i) of Mercat conjecture (\cite{Mercat2002CLIFFORD}) holds for smooth plane curves.
\begin{Cor}\label{cor:mercatforp2}
	Let $C$ be a degree $l(\geq 5)$ smooth irreducible plane curve, then 
	\[\clf_r(C)=l-4,\]
	for any $r$.  
\end{Cor}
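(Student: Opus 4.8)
The plan is to establish the two inequalities $\clf_r(C)\le l-4$ and $\clf_r(C)\ge l-4$ separately. Write $g=\binom{l-1}{2}$ for the genus, so that $g-1=l(l-3)/2$ and the degrees occurring in the definition of $\clf_r(C)$ satisfy $0\le d\le r(g-1)=rl(l-3)/2$. For the upper bound I would simply restrict $\mathcal O_{\pp}(1)$: the bundle $E=\bigl(\mathcal O_{\pp}(1)|_C\bigr)^{\oplus r}$ is semistable of rank $r$ and degree $rl$, and the sequence $0\to\mathcal O_{\pp}(1-l)\to\mathcal O_{\pp}(1)\to\mathcal O_C(1)\to0$ together with $H^0(\mathcal O_{\pp}(1-l))=H^1(\mathcal O_{\pp}(1-l))=0$ gives $h^0(C,\mathcal O_C(1))=3$. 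Hence $h^0(C,E)=3r\ge 2r$, while $d=rl\le r(g-1)$ precisely because $l\ge5$, and $\clf(E)=l-6+2=l-4$. This yields $\clf_r(C)\le l-4$.

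For the lower bound I must show $\clf(E)\ge l-4$ for every semistable $E$ of rank $r$ and degree $d\le r(g-1)$ with $h^0(C,E)\ge2r$; since a semistable bundle of negative slope has no sections I may assume $0\le d$. Setting $u\coloneqq d/(rl)$, the inequality $\clf(E)\ge l-4$ is equivalent to $h^0(C,E)\le(d-rl)/2+3r$, and I would feed each estimate of Theorem \ref{thm:p2} into this form. When $d\ge rl$, substituting $h^0\le r+\bigl(\tfrac{3}{2l}+\tfrac{d}{2rl^2}\bigr)d$ collapses the target to $\clf(E)\ge u(l-3-u)$. On the interval $u\in[1,(l-3)/2]$ forced by $rl\le d\le r(g-1)$ this concave function is increasing, with minimum $l-4$ at $u=1$ and value $(l-3)^2/4\ge l-4$ at the right endpoint (equivalent to $(l-5)^2\ge0$). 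The hypothesis $d\le r(g-1)$ is essential here, since $u(l-3-u)$ falls below $l-4$ once $u$ exceeds $(l-3)/2$.

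When $d<rl$ the two slope ranges of Lemma \ref{lem:p2phasebound} produce two competing estimates. The bound $h^0\le 3r+d-rl$ gives $\clf(E)\ge 2l-4-d/r>l-4$ when $r(l-1)\le d<rl$, and gives $h^0<2r$ when $d<r(l-1)$, so such $E$ drop out of the minimization. The bound $h^0\le r+(rl+r)d/(rl^2-d)$ gives $h^0<2r$ exactly for $d<rl^2/(l+2)$, again removing those $E$; for $d\ge rl^2/(l+2)$ it yields $\clf(E)\ge ul-\dfrac{2(l+1)u}{l-u}$, a concave function of $u$ whose endpoint values at $u=l/(l+2)$, $u=(l-1)/l$ and $u=1$ all dominate $l-4$ (reducing respectively to $-4\ge-8$, $l^2-3l+5\ge0$ and $l\ge3$). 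Assembling the cases shows $\clf(E)\ge l-4$ throughout, so $\clf_r(C)= l-4$.

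I expect the main obstacle to be the bookkeeping of the final regime $0\le d<r(l-1)$: for each such $E$ one must decide from Theorem \ref{thm:p2} whether $h^0(C,E)<2r$ (so $E$ is irrelevant) or $\clf(E)\ge l-4$, and verify that the two descriptions agree at the transition $d=rl^2/(l+2)$ and along the boundary between the two slope ranges of Lemma \ref{lem:p2phasebound}. Conceptually this is only a finite list of one-variable convexity and endpoint checks in $u$, so the difficulty is organizational; the sole tight points are $u=1$ for every $l$ and the value $l=5$, both of which saturate the estimate and pin $\clf_r(C)$ to exactly $l-4$.
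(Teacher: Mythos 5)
Your proof is correct and follows essentially the same route as the paper: the upper bound comes from $\mathcal O_C(1)^{\oplus r}$, and the lower bound from substituting the two estimates of Theorem \ref{thm:p2} into the Clifford index and doing concavity/endpoint checks in $d$ (the paper splits at the threshold $\tfrac{l^2-l}{l+1}r$ and linearizes the bound $r+\tfrac{(rl+r)d}{rl^2-d}$, while you use the exact cutoffs $r(l-1)$ and $rl^2/(l+2)$, but this is only a cosmetic reorganization). The one inaccuracy is your side remark that $u(l-3-u)$ drops below $l-4$ once $u>(l-3)/2$; in fact it stays above $l-4$ until $u=l-4$, but this does not affect the argument.
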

\begin{proof}
	Let $E$ be a semistable vector bundle with rank $r$ and degree $l$, when $d\geq rl$, by Theorem \ref{thm:p2}
	\begin{align*}
	\clf(E) & \geq \frac{d}r-\frac{2}r\left(\frac{3}{2l} + \frac{d}{2rl^2}\right)d  \geq \min_{d=rl,d=rl(l-3)/2}\left\{\frac{d}r-\frac{2}r\left(\frac{3}{2l} + \frac{d}{2rl^2}\right)d\right\} \\ 
	& = \min\left\{l-2l\left(\frac{3}{2l} + \frac{rl}{2rl^2}\right),\frac{l(l-3)}2-\left(\frac{3}{2l} + \frac{l-3}{4l}\right)l(l-3)\right\} \\ 
	& = \min\left\{l-4,\frac{l^2-6l+9}4\right\}=l-4.
	\end{align*}
    When $\frac{l^2-l}{l+1}r\leq d< rl$ and the upper bound for $H^0(C,E)$ is given by $3r+d-rl$  in Theorem \ref{thm:p2}, then
	\begin{align*}
	\clf_r(E) & \geq \frac{d}r- \frac{2}r\left(2r+d-rl\right)=l-4+l-\frac{d}r>l-4.
	\end{align*}
	When $\frac{l^2-l}{l+1}r\leq d< rl$ and the upper bound for $H^0(C,E)$ is given  by $r+\frac{rl+r}{rl^2-d}d$ in Theorem \ref{thm:p2}, then
	\begin{align*}
	\clf_r(E) & \geq \frac{d}r- \frac{2(l+1)}{r(l^2-l)}d=\frac{d}r(1-\frac{2l+2}{l^2-l})\geq \frac{l^2-l}{l+1}(1-\frac{2l+2}{l^2-l})>l-2-2.
	\end{align*}
	When $d<\frac{l^2-l}{l+1}r$, by Theorem \ref{thm:p2}, 
	$\dim H^0(C,E)<r+\frac{rl+r}{rl^2-rl}d<2r$.
	On the other side, one may take $E=\cO_C(1)^{\oplus r}$, then $\clf(E)=l-4$. Therefore, we have $\clf_r(C)=l-4$.
\end{proof}

\bibliography{all}                      
\bibliographystyle{halpha}     

\end{document}